\newcommand{\x}{{\tt x}}
\newcommand{\y}{{\tt y}}
\newcommand{\tc}{{\tt t}}
\newcommand{\N}{\mathbb{N}}
\newcommand{\R}{\mathbb{R}}
\newcommand{\diver}{\text{div}}
\newcommand{\eps}{\varepsilon}
 \newtheorem{lemma}{Lemma}[section]
\newtheorem{definition}{Definition}[section]
\newtheorem{theorem}{Theorem}[section]
\newtheorem{proposition}{Proposition}[section]
\newtheorem{corollary}{Corollary}[section]
\newtheorem{remark}{Remark}[section]
\newcommand{\bremark}{\begin{remark} \em}
\newcommand{\eremark}{\end{remark} }
\numberwithin{equation}{section}
\title{The Jacobi operator of some special minimal hypersurfaces}
\author{Oscar Agudelo}
\address{University of West Bohemia in Pilsen-NTIS, Univerzitn\'{i} 22, Czech Republic.}
\email {oiagudel@ntis.zcu.cz}
\author{Matteo Rizzi}
\address{Mathematisches Institut, Justus Liebig Universit\"{a}t, Arndtstrasse 2, 35392, Giessen, Germany.}
\email{mrizzi1988@gmail.com}
\thanks{O. Agudelo was supported by the Grant 22-18261S of the Grant Agency of the Czech Republic. M. Rizzi was partially supported by the Alexander von Humboldt foundation.}
\begin{document}
\maketitle

\begin{abstract}
In this work we discuss stability and nondegeneracy properties of some special families of minimal hypersurfaces embedded in $\R^m\times \R^n$ with $m,n\geq 2$. These hypersurfaces are asymptotic at infinity to a fixed Lawson cone $C_{m,n}$. In the case $m+n\ge 8$, we show that such hypersurfaces are strictly stable and we provide a full classification of their bounded Jacobi fields, which in turn allows us to prove the non-degeneracy of such surfaces. In the case $m+n\le 7$, we prove that such hypersurfaces have infinite Morse index.
\end{abstract}

{\bf Keywords.} Lawson cone, Morse index, stability, nondegeneracy, jacobi fields.

\section{Introduction}
In this work we are interested in stability and nondegeneracy properties of certain minimal hypersurfaces $\Sigma$ embedded in $\R^{N+1}$ with $N\ge 3$. To make our aims precise let us start with some preliminaries.\\

In what follows we assume that $\Sigma\subset \R^{N+1}$ and we will denote its singular set by ${\rm sing}(\Sigma)$. Assume that ${\rm sing}(\Sigma)$ has zero {\it $N-$dimensional Hausdorff measure} and that $\Sigma \setminus {\rm sing}(\Sigma)$ is an orientable hypersurface. Let  $\nu_\Sigma:\Sigma \setminus {\rm sing}(\Sigma) \to S^{N}$ denote a choice of the continuous unit normal vector to $\Sigma$. Here $S^N$ denotes the standard unit sphere in $\R^{N+1}$.\\

These assumptions allow us in particular to consider the Fermi coordinates 
$$
(y,z)\mapsto y +z\nu_{\Sigma}(y)
$$
as a smooth diffeomorphism from $\Sigma \setminus {\rm sing}(\Sigma)\times (-\delta,\delta)$ onto the open set set 
$$\mathcal{N}:=\big\{y + z\nu_{\Sigma}(y)\,:\, y\in \Sigma\setminus {\rm sing}(\Sigma) \quad \hbox{and}\quad |z|<\delta\big\}.
$$

It is known that the area of $\Sigma$ is given by the formula 
$$
{\rm Area}(\Sigma):=\int_\Sigma d\sigma,
$$
where $d\sigma$ is the $N-$dimensional Haussdorff measure. We note that ${\rm Area}(\Sigma)\in [0,\infty]$ and that whenever $\Sigma$ is smooth, ${\rm Area}(\Sigma)$ coincides with the classic definition of area of a manifold with respect to the metric induced by the inner product in $\R^{N+1}$.\\

Minimality, stability and nondegeneracy properties of $\Sigma$ can be understood through appropriate derivatives of ${\rm Area}(\Sigma)$. In order to make these concepts precise we next introduce the notion of normal perturbation of $\Sigma$.\\
%Since $\Sigma-\setminus {\rm sing}(\Sigma)$ is orientable and smooth 
Let $\delta>0$ be small and fixed. Consider a function $\phi\in C^\infty_c(\Sigma\setminus {\rm sing}(\Sigma))$, a function $f:\Sigma\times(-\delta,\delta)\to\R^{N+1}$ such that $f$ is smooth in $\big(\Sigma\setminus {\rm sing}(\Sigma)\big)\times (-\delta,\delta)$, $f=f(y,\eps)$ and 
$$
f(y,0)=0 \quad  \mbox{and} \quad \partial_\eps f(y,0)=\phi(y)\nu_\Sigma(y) \quad \mbox{for} \quad y\in \Sigma\setminus {\rm sing}(\Sigma).
$$
For $\eps\in (-\delta,\delta)$ the $\eps-$normal variation of $\Sigma$ with respect to the function $f$ is given by the hypersurface 
\begin{equation}
\label{def-normal-variation}
\Sigma_\eps:=\{y+f(y,\eps):\,y\in\Sigma\} \quad \hbox{for}.
\end{equation}
Let $B\subset \R^{N+1}$ be an open ball such that  ${\rm supp}(\phi)\subset B$ and consider the function 
$$
 (-\delta,\delta)\ni \eps \mapsto {\rm Area}(\Sigma_{\eps}\cap B).
 $$
 
Notice that ${\rm Area}(\Sigma_{\eps}\cap B)\in [0,\infty)$ and it is a smooth function. Even more,
$$
{\rm Area}(\Sigma_{\eps}\cap B)=\int_{\Sigma_{\eps}\cap B}d\sigma.
$$

It is well known that the local  change of area of $\Sigma$ with respect to the formal perturbation $f$ is given by 
\begin{equation}\label{eqn:der_area_sigma_eps}
\frac{d}{d\eps} {\rm Area}(\Sigma_\eps\cap B)\bigg|_{\eps=0}=-\int_\Sigma H_\Sigma \phi d\sigma,
\end{equation}
where $H_{\Sigma}:= \kappa_1 + \cdots + \kappa_N$ is the mean curvature of $\Sigma$ and $\kappa_1,\ldots,\kappa_N: \Sigma \setminus {\rm sing}(\Sigma)\to \R$ correspond to the principal curvatures of $\Sigma$. We remark that the left-hand side in \eqref{eqn:der_area_sigma_eps} does not depend on $B$ but only on $\phi$.

The hypersurface $\Sigma$ is said to be \textit{minimal} if $H_\Sigma\equiv 0$ in $\Sigma\backslash{\rm sing}(\Sigma)$ or equivalently, since the function $\phi$ is arbitrary, if $\Sigma$ is a critical point of the Area functional with respect to compactly supported normal variations. Here it is crucial that $\phi$ is supported outside the singular set of $\Sigma$ and $\Sigma$ is smooth outside ${\rm sing}(\Sigma)$.

Assume that $\Sigma$ is minimal. In the above notations for the normal variation of $\Sigma$, we introduce the second variation of ${\rm Area}(\Sigma)$, which is given by
\begin{equation}
\label{second-variation-Area}
\mathcal{Q}_\Sigma(\phi):=\frac{d^2}{d\eps^2}{\rm Area}(\Sigma_\eps\cap B)\bigg|_{\eps=0}=\int_\Sigma \big(|\nabla_\Sigma \phi|^2-|A_\Sigma|^2\phi^2) \, d\sigma.
\end{equation}
where $B\subset\R^{N+1}$ is any open ball such that ${\rm supp}(\phi)\subset B$. We refer the reader to \cite{Nunes} for the detailed calculations.\\

Now, observe that $\Sigma$ being minimal does not necessarily imply that $\Sigma$ minimises the area functional. In order to  describe the character of the minimality of $\Sigma$, the second variation of ${\rm Area}(\Sigma)$ with respect to the normal variation $\Sigma_{\eps}$ plays a crucial role. Hence we introduce the following definition.
\begin{definition}
\label{def-Sigma-stable}
Let $\Sigma\subset \R^{N+1}$ be a minimal hypersurface of codimension one with ${\rm sing}(\Sigma)$ having measure zero and such that $\Sigma\setminus {\rm sing}(\Sigma)$ is an orientable hypersurface. Assume that $\Sigma$ is minimal. We say that
\begin{enumerate}
\item $\Sigma$ is stable if for any $\phi\in C^\infty_c(\Sigma\setminus  {\rm sing}(\Sigma))$, $\mathcal{Q}_\Sigma(\phi)\ge 0$;  
\item $\Sigma$ is strictly stable if for any $\phi\in C^\infty_c(\Sigma\setminus {\rm sing}(\Sigma))\setminus\{0\}$, $\mathcal{Q}_\Sigma(\phi)>0$.
\end{enumerate}
\end{definition}
It follows from the definitions and Taylor expansions that any minimising hypersurface is stable.\\

Integrating by parts it is possible to see that
$$\mathcal{Q}_\Sigma(\phi)=\int_\Sigma -J_\Sigma \phi\,\phi\, d\sigma\qquad\forall\,\phi\in C^\infty_c(\Sigma\setminus {\rm sing}(\Sigma)),$$
where $J_\Sigma:=\Delta_\Sigma+|A_\Sigma|^2$ is known as the \textit{Jacobi operator} of $\Sigma$, $\Delta_\Sigma$ is the Laplace-Beltrami operator of $\Sigma$ and $|A_\Sigma|^2:=\kappa_1^2+\dots+\kappa_N^2$ is the squared norm of the second fundamental form, or equivalently the sum of the squared principal curvatures of $\Sigma$. A distributional solution $\phi$ to the Jacobi equation 
\begin{equation}
\label{Jacobi-eq}
    J_\Sigma\phi=0
\end{equation}
is known as a \textit{Jacobi field} of $\Sigma$. We are also interested in Jacobi fields which are not necessarily smooth, for instance unbounded Jacobi fields, even on smooth minimal hypersurfaces (see Section \ref{sec-Jacobi}).\\

%At this level, we are not yet requiring that $\Sigma$ is minimal. \color{red}In fact, the concept of Jacobi fields is also useful in the case of constant mean curvaure hypersurfaces (see \cite{HK1, HK2}).\\ 

As mentioned above, a minimal hypersurface is not necessarily a minimiser of the Area functional. For this reason, it is worth introducing the concept of \textit{Area minimising hypersurfaces}.\\

We define the perimeter of a measurable subset $E\subset\R^{N+1}$ in an open set $\Omega\subset\R^{N+1}$ as
\begin{equation}\label{Perimeter}
{\rm Per}(E,\Omega):=\sup\left\{\int_E \diver X \,d\xi:\, X\in C^\infty_c(\Omega,\R^{N+1}),\,|X|\le 1\right\}.
\end{equation}
If $E$ has smooth boundary $\partial E$, it follows from the Divergence Theorem that ${\rm Per}(E,\Omega)$ coincides with the $N$-dimensional {\it Hausdorff measure} of $\partial E\cap\Omega$. However, the definition in \eqref{Perimeter} allows us to treat the case of sets $E$ with non-smooth boundary $\partial E$.\\

%\color{red}Put a graph (Oscar) of a non minimising cone to explain the concept of perimeter.\\

\color{black}Assuming, without loss of generality, that $0\in\partial E$, we define area minimising hypersurfaces as follows.
\begin{definition}
\label{def_area-minimising}
We say that $\Sigma:=\partial E$ is an area-minimising (or minimising) hypersurface if for any $\rho>0$ and for any smooth set $F\subset\R^{N+1}$ such that $F\backslash B_\rho(0)=E\backslash B_\rho(0)$,
$$
{\rm Per}(E,B_{2\rho}(0))\le {\rm Per}(F,B_{2\rho}(0)).
$$
\end{definition}
In view of Definition \ref{def_area-minimising}, Area-minimising hypersurfaces are true minimisers of the Area, so that, if they are regular enough, they are minimal hypersurfaces too, while the converse is not necessarily true.\\

A hypersurface $C\subset\R^{N+1}$ is said to be a \textit{cone} if there exists $y_0\in C$ such that 
$$y\in C\text{ if and only if }r (y-y_0)\in C,\,\forall\,r\ge 0.$$
Assuming, up to a translation, that $y_0=0$, any cone can be written as $$C=\{r\theta:\,\theta\in\Gamma,\,r>0\},$$
where $\Gamma=C\cap S^N$ is a suitable $(N-1)$-dimensional hypersurface in the $N$-dimensional unit sphere $S^N$.\\ 

It is known that there are no singular minimising cones of dimension $N\le 6$ (see \cite{A,DeGiorgi,Simons}), while such cones do exist in higher dimension. For instance the Lawson cone
$$C_{m,n}:=\{(x,y)\in\R^m\times\R^n:\,(n-1)|x|^2=(m-1)|y|^2\},\qquad m,n\ge 2$$
is area minimising provided $m+n\ge 9$ or $m+n\ge 8$ and $m,n\ge 3$, while $C_{2,6}$ and $C_{6,2}$ have zero mean curvature everywhere, except at the origin, but they are not area minimising. %\color{red}see graph Oscar \color{black}
This result was proved in \cite{D} using some previous results by \cite{BDG,La,Sim}.\\

Let $$C:=\{r\theta:\,\theta\in\Gamma,\,r>0\}\subset\R^{N+1}$$ 
be a cone %of codimension $1$ 
with zero mean curvature at any non-singular point and let 
$$\lambda_0<\lambda_1\le\lambda_2\le\dots$$
be the eigenvalues of $-J_\Gamma$ in $H^1(\Gamma)$. It is known that a minimal cone $C$ is stable if and only if
\begin{equation}
\label{strict-stab-cone}
\left(\frac{N-2}{2}\right)^2+\lambda_0\ge 0.
\end{equation}
If (\ref{strict-stab-cone}) is satisfied, we set $\Lambda_0:=\sqrt{\left(\frac{N-2}{2}\right)^2+\lambda_0}$. In particular, $C$ is strictly stable if and only if $\Lambda_0>0$. (See \cite{HS} and references there in). We will see in Section \ref{sec-notations} that $\lambda_0<0$. \\

%\textcolor{red}{It is known that a general minimal cone $C$ is stable iff \eqref{strict-stab-cone} holds }

In this paper we consider minimal hypersurfaces which are asymptotic to some fixed cone $C$. 
\begin{theorem}[\cite{HS}]
\label{th-Sigma}
Let $N\ge 7$. Assume that $C\subset\R^{N+1}$ is a minimising cone. Then there exist exactly two oriented, embedded smooth area minimising hypersurfaces $\Sigma^\pm\subset\R^{N+1}$ such that
\begin{enumerate}
\item $\Sigma^\pm$ are asymptotic to $C$ at infinity and do not intersect $C$.
\item $dist(\Sigma^\pm,\{0\})=1$.
\end{enumerate}
Moreover, the Jacobi field $y\cdotp\nu_{\Sigma^\pm}(y)$ never vanishes.
\end{theorem}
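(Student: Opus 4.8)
The plan is to reconstruct the Hardt--Simon foliation. Write $C=\partial E$ for the minimising set $E$, so that $\R^{N+1}\setminus C$ splits into two connected open regions $\mathcal{E}^+$ and $\mathcal{E}^-$. I would work on one side, say $\mathcal{E}^+$, and produce the leaf $\Sigma^+$; the construction on $\mathcal{E}^-$ is identical. The idea is to solve a one-sided constrained perimeter problem: for each large $R$ minimise ${\rm Per}(\,\cdot\,,B_R)$ among measurable sets $F$ that agree with $E$ outside $B_R$ and are constrained to contain a fixed small ball inside $\mathcal{E}^+$ (equivalently, to stay on one side of a barrier detaching the competitor from the singular vertex $0$). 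Each such problem has a minimiser $E_R$ by the direct method (lower semicontinuity of perimeter and $BV$-compactness), and $\partial E_R$ is a minimising boundary in the sense of Definition \ref{def_area-minimising}.

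Next I would pass to the limit $R\to\infty$. The monotonicity formula together with the uniform minimality bounds the density ratios, so a subsequence of $\partial E_R$ converges locally to a minimising boundary $\Sigma^+\subset\overline{\mathcal{E}^+}$. The constraint guarantees $\Sigma^+\neq\emptyset$, and a blow-down argument (the tangent cone at infinity of a minimiser is itself a minimising cone, which here must be $C$) shows $\Sigma^+$ is asymptotic to $C$. Disjointness $\Sigma^+\cap C=\emptyset$ follows from the strong maximum principle: $C$ and $\Sigma^+$ are both minimal and $\Sigma^+$ lies on one side of $C$, so a contact point would force $\Sigma^+=C$, impossible since $\Sigma^+$ will be smooth while $C$ is singular at $0$. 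Finally I would rescale so that ${\rm dist}(\Sigma^+,\{0\})=1$.

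For smoothness and the global picture I would use that minimising boundaries in $\R^{N+1}$ are smooth off a set of Hausdorff dimension at most $N-7$ (Federer dimension reduction). Since $\Sigma^+$ avoids $0$ and the dilations $r\Sigma^+$, $r>0$, are again minimising and pairwise disjoint, they sweep out $\mathcal{E}^+$; an interior singular point of $\Sigma^+$ would generate under this scaling a whole ray of singular points, contradicting the dimension bound. Hence $\Sigma^+$ is smooth and embedded and $\{r\Sigma^+\}$ is a genuine foliation of $\mathcal{E}^+$; the same holds on $\mathcal{E}^-$, and the normalisation ${\rm dist}(\,\cdot\,,0)=1$ selects exactly one leaf from each family, giving precisely the two hypersurfaces $\Sigma^\pm$ (any other minimiser asymptotic to $C$ is trapped between leaves and, upon sliding to first contact, coincides with one of them). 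I expect the delicate points to be the decay estimate yielding the sharp asymptotics to $C$ (where the stability exponent $\Lambda_0$ and the indicial analysis enter) and the exclusion of interior singularities in the borderline dimension.

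It remains to prove that the Jacobi field $u:=y\cdot\nu_{\Sigma^\pm}(y)$ never vanishes. Since dilations preserve minimality, the normal part of the dilation field $y$ is a Jacobi field, i.e. $J_{\Sigma^\pm}u=0$; concretely $u$ is the lapse function of the foliation $\{r\Sigma^\pm\}$, whose leaves are pairwise disjoint. Disjointness forces the normal speed, and hence $u$, to have a fixed sign, say $u\ge 0$. Then $\Delta_{\Sigma^\pm}u=-|A_{\Sigma^\pm}|^2u\le 0$, so $u$ is a nonnegative superharmonic function on the connected surface $\Sigma^\pm$. If $u$ vanished somewhere it would attain its minimum there, and the strong maximum principle would give $u\equiv 0$; but $u\equiv 0$ means the position vector is everywhere tangent to $\Sigma^\pm$, so $\Sigma^\pm$ would be a cone with vertex $0$, contradicting ${\rm dist}(\Sigma^\pm,\{0\})=1$ and smoothness. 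Therefore $u$ never vanishes, which completes the proof.
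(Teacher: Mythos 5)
This theorem is not proved in the paper: it is quoted verbatim from Hardt--Simon \cite{HS}, so there is no in-paper argument to compare yours against. Your outline does follow the actual Hardt--Simon strategy (one-sided constrained minimisation in each component of $\R^{N+1}\setminus C$, passage to the limit, the dilation foliation $\{r\Sigma^{\pm}\}_{r>0}$ of the two sides, strong maximum principle for disjointness from $C$ and for uniqueness by sliding, and the positivity of $y\cdot\nu_{\Sigma^{\pm}}$ from the foliation together with $\Delta_{\Sigma^\pm}u=-|A_{\Sigma^\pm}|^2u\le 0$ and the strong maximum principle). The final Jacobi-field argument is correct as written, granted connectedness of the leaves and the nestedness of the dilates, which does follow since the pairwise disjoint leaves fill the connected open region.

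There is, however, a genuine gap at the step you treat most casually: smoothness of $\Sigma^{\pm}$. Your argument that an interior singular point ``would generate under this scaling a whole ray of singular points, contradicting the dimension bound'' does not work. The Federer bound $\dim\,{\rm sing}\le N-7$ applies to each minimising hypersurface separately; the union $\bigcup_{r>0}r\,{\rm sing}(\Sigma^{+})$ is a cone over ${\rm sing}(\Sigma^{+})$ of dimension at most $N-6$, distributed over distinct leaves, and contradicts nothing. Regularity of the leaves is in fact the hardest part of \cite{HS}: one must show that a tangent cone at a putative singular point of the one-sided minimiser would itself be a singular minimising cone lying on one side of, and touching, the (locally graphical) nearby leaves, and rule this out by a maximum-principle/dimension-reduction argument for minimising boundaries touching a smooth minimal hypersurface from one side. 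A second, smaller, gap is the asymptotic statement: to get that $\Sigma^{\pm}$ is a decaying normal graph over $C$ at infinity (which is the sense of ``asymptotic'' the paper actually uses in Definition \ref{def-as-cone}, and what feeds into Theorem \ref{th-dec-w}) you need uniqueness of the tangent cone at infinity \`a la Allard--Almgren/Simon, not just subsequential blow-down convergence; you flag this as delicate but do not supply it.
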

%We observe that Property (3) of Theorem \ref{th-Sigma} implies that the jacobi fields $y\cdotp\nu_{\Sigma^\pm}(y)$ never vanish, so that $\Sigma^\pm$ are stable, but it does not say anything about strict stability.\\ 
Assumption (\ref{strict-stab-cone}) is fulfilled, for instance, by the Lawson cone $C_{m,n}$ for $m,n\ge 2,\,N+1=m+n\ge 8$, since $\lambda_0=-(N-1)$, hence
$$\Lambda_0=\sqrt{\left(\frac{N-2}{2}\right)^2-(N-1)}>0.$$\\

Such cones enjoy some specific symmetry 
properties. Let us make this statement precise. Given a subgroup $S$ of $O(N+1)$, we say that a hypersurfaces $\Sigma$ is $S$-invariant if for any $y\in\Sigma$ and for any $\rho\in S$, we have $\rho(y) \in\Sigma$. The Lawsons cone are $O(m)\times O(n)$-invariant. Setting
$$E^\pm_{m,n}:=\{(x,y)\in\R^m\times\R^n:\,\pm\left((m-1)|y|^2-(n-1)|x|^2\right)>0\},$$
we have the following result.
\begin{theorem}\label{th_Al}{\cite{AK}}
Let $m,n\ge 2$, $m+n\ge 8$. Then there exist exactly two smooth minimal hypersurfaces $\Sigma^\pm_{m,n}\subset E^\pm_{m,n}$ satisfying that
\begin{enumerate} 
\item $\Sigma^\pm_{m,n}$ are asymptotic to $C_{m,n}$ at infinity.
\item ${\rm dist}(\Sigma^{\pm}_{m,n},\{0\})=1$;
\item  $\Sigma^\pm_{m,n}$ are $O(m)\times O(n)$-invariant.
\end{enumerate}
Moreover, the Jacobi field $y\cdotp\nu_{\Sigma^\pm_{m,n}}(y)$ never vanishes and $\Sigma^\pm_{m,n}$ do not intersect $C_{m,n}$.
\end{theorem}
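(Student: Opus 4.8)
The plan is to exploit the $O(m)\times O(n)$ symmetry to reduce the problem to a planar ODE and then to run a shooting and phase-plane argument; this equivariant route has the advantage that it does not require $C_{m,n}$ to be minimising, so it also covers $C_{2,6}$ and $C_{6,2}$, which are minimal but not area-minimising and hence fall outside the scope of Theorem \ref{th-Sigma}. An $O(m)\times O(n)$-invariant hypersurface is completely determined by a profile curve $\gamma$ in the open quarter-plane $Q=\{(r,s):r,s>0\}$, where $r=|x|$ and $s=|y|$, via $\Sigma=\{(x,y):(|x|,|y|)\in\gamma\}$. Since the orbit through a point with $|x|=r,\ |y|=s$ is, up to constants, a product of an $(m-1)$-sphere of radius $r$ and an $(n-1)$-sphere of radius $s$, the area functional is proportional to the weighted length $\int_\gamma r^{m-1}s^{n-1}\,d\ell$. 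Thus $\Sigma$ is minimal if and only if $\gamma$ is a critical point of this weighted length, i.e. a solution of the associated second-order Euler--Lagrange ODE. In polar coordinates $r=\rho\cos\theta,\ s=\rho\sin\theta$ the Lawson cone $C_{m,n}$ is the ray $\theta\equiv\theta_0$ with $\tan\theta_0=\sqrt{(n-1)/(m-1)}$, the regions $E^\pm_{m,n}$ are $\{\pm(\theta-\theta_0)>0\}$, and the ray is itself a solution because the cone is minimal.

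First I would use the scale invariance of the weight to collapse this second-order profile ODE to an autonomous first-order system. Taking $t=\log\rho$ as independent variable together with the angle $\psi$ that $\gamma$ makes with the radial direction, one obtains an autonomous planar system whose equilibrium is the cone $\{\theta=\theta_0,\ \gamma\ \text{radial}\}$. Its linearisation is exactly the indicial (Jacobi) equation of the cone: separating variables gives the exponents $\gamma_\pm=-\tfrac{N-2}{2}\pm\Lambda_0$ with $\Lambda_0=\sqrt{(\tfrac{N-2}{2})^2+\lambda_0}$ and $\lambda_0=-(N-1)$. For $m,n\ge2,\ m+n\ge8$ one has $N=m+n-1\ge7$, whence $(\tfrac{N-2}{2})^2-(N-1)>0$, so $\Lambda_0$ is real and both exponents are negative. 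Hence the equilibrium is a node, and nearby trajectories approach the cone monotonically in $\theta$, without oscillating across $\theta_0$.

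Next I would construct the surfaces by shooting from the boundary of $Q$. For an invariant hypersurface to be smooth and to miss the origin, its profile must meet one of the axes orthogonally (on $\{r=0\}$ the $\R^m$-factor collapses, forcing orthogonal incidence for regularity, and symmetrically on $\{s=0\}$). Shooting from $(0,s_0)$ on the $s$-axis, where $\theta=\pi/2>\theta_0$, i.e. into $E^+_{m,n}$, yields by ODE existence and uniqueness a single orbit; the scale invariance removes the parameter $s_0$, to be fixed at the end by $\mathrm{dist}(\Sigma,\{0\})=1$. Using the node structure together with a barrier comparison against the cone, I would show this orbit stays in $E^+_{m,n}$, is a monotone graph $\theta=\theta(\rho)$, and converges to $\theta_0$ as $\rho\to\infty$; this is $\Sigma^+_{m,n}$, and shooting from the $r$-axis into $E^-_{m,n}$ produces $\Sigma^-_{m,n}$. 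Conversely, any smooth complete invariant minimal hypersurface on a given side must meet the corresponding axis orthogonally and hence, by uniqueness and scaling, coincides with the one just built, giving exactly two such surfaces. Non-intersection with $C_{m,n}$ is then automatic, since each orbit stays strictly in $E^\pm_{m,n}$ for all finite $\rho$ and meets $\theta_0$ only in the limit.

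Finally, for the Jacobi field I would note that the dilations $y\mapsto e^\tau y$ preserve the minimal-surface equation, so $\{e^\tau\Sigma^\pm_{m,n}\}$ is a variation through minimal hypersurfaces whose normal speed at $\tau=0$ is $y\cdot\nu_{\Sigma^\pm_{m,n}}(y)$; differentiating $H\equiv0$ in $\tau$ shows this function solves $J_{\Sigma^\pm_{m,n}}\phi=0$. To see it never vanishes, observe that for an invariant surface $y\cdot\nu$ equals the planar support function $(r,s)\cdot n_\gamma$ of the profile, with $n_\gamma$ its unit normal in the $(r,s)$-plane; this vanishes precisely where $\gamma$ is radial. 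Since $\gamma$ meets the axis orthogonally (there the support function equals $s_0>0$) and is, by the node analysis, a monotone graph transverse to every ray, the support function keeps a constant sign, so $y\cdot\nu_{\Sigma^\pm_{m,n}}$ never vanishes. The main obstacle is exactly the phase-plane dichotomy: one must rule out oscillatory behaviour and establish monotone convergence of the shot orbit to the cone while remaining on one side, and it is here that $m+n\ge8$, equivalently $\Lambda_0\in\R$, is indispensable --- for $m+n\le7$ the equilibrium is a focus and the analogous orbit spirals across $\theta_0$ infinitely often, as in the Bombieri--De Giorgi--Giusti analysis \cite{BDG,HS}.
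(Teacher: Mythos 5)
This theorem is not proved in the paper at all: it is imported from \cite{AK}, where it is stated as a generalisation of Theorem 1.1 of \cite{AKR}, and the present paper only records that its proof relies on the equivariant constructions of \cite{ABPRS,AK,HS,M,SS}. So there is no in-paper argument to compare against; measured against those sources, your sketch follows exactly their route --- reduction to the weighted length $\int_\gamma r^{m-1}s^{n-1}\,d\ell$ of the profile curve, passage to an autonomous system in $t=\log\rho$, identification of the indicial exponents $-\tfrac{N-2}{2}\pm\Lambda_0$ with $\Lambda_0=\sqrt{\left(\tfrac{N-2}{2}\right)^2-(N-1)}$, and the node/focus dichotomy at $m+n=8$ --- and you correctly identify why this equivariant argument, unlike Theorem \ref{th-Sigma}, also covers the non-minimising cones $C_{2,6}$ and $C_{6,2}$.

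Two steps are, however, genuinely incomplete rather than merely compressed, and they are the substantive content of the theorem. First, the uniqueness (``exactly two''): you assert that any smooth complete invariant minimal hypersurface contained in $E^\pm_{m,n}$ must meet the corresponding coordinate axis orthogonally. That is not automatic: one must exclude complete profile curves lying entirely in the open quadrant on one side of the cone (for instance a two-ended curve with both ends asymptotic to $C_{m,n}$ and an interior point of minimal distance to the origin), which requires the global phase-plane classification --- no periodic orbits, and no trajectory converging to the equilibrium as $t\to-\infty$ since both eigenvalues are strictly negative --- and this is precisely the work carried out in \cite{ABPRS}. Moreover, the shooting from the axis is a \emph{singular} initial value problem (in the paper's notation the term $(m-1)b'/a$ blows up as $a\to0$), so existence and uniqueness of the orthogonally emanating orbit must be established separately, not quoted from standard ODE theory. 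Second, the non-vanishing of $y\cdot\nu_{\Sigma^\pm_{m,n}}$: the node structure only controls the trajectory near the equilibrium, i.e.\ for large $\rho$; the claim that the planar support function keeps a constant sign along the \emph{whole} profile is equivalent to the tangent never becoming radial at any interior point, a global statement that needs an invariant-region or monotonicity argument for the full trajectory (note that $\rho$ itself is critical at the axis point, so the curve is not globally a graph over $\rho$). Both gaps are fillable by the arguments in the cited references, but as written they are asserted rather than proved.
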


Theorem \ref{th_Al} is stated in \cite{AK} as a generalisation of Theorem $1.1$ of \cite{AKR}. The proof of Theorem  \ref{th_Al} relies on the constructions in \cite{ABPRS,AK,HS,M,SS}. When either $m,n\ge 3$ with $m+n=8$ or $m,n\ge 2$ with $m+n\ge 9$, the Lawson cone $C_{m,n}$ is area minimising. Thus, in either case Theorem \ref{th_Al} is a particular case of Theorem \ref{th-Sigma}. On the other hand, when either $m=2$ and $n=6$ or $m=6$ and $n=2$, the Lawson cones $C_{2,6}$ and $C_{6,2}$ are strictly stable but not area minimising and in this sense Theorem \ref{th_Al} complements the statement of Theorem \ref{th-Sigma}. We remark that in $\R^{N+1}$ with $N+1\ge 8$, the problem of existence of minimising cones of codimension $1$, which are not Lawson cones is an interesting, but difficult open problem.\\

Each of the hypersurfaces mentioned in Theorems \ref{th-Sigma} and \ref{th_Al} has a positive Jacobi field given by $y\in \Sigma \mapsto |y\cdot\nu_\Sigma(y)|$. As observed in Remark $2.1$ of \cite{AK}, this implies their stability. However, this is not enough to prove their strict stability. In this work we prove the strictly stable character of these hypersurfaces.\\

Next, we discuss a maximum principle and an injectivity result for the Jacobi operator of smooth minimal hypersurfaces which are asymptotic  at infinity to a stable cone. These two results hold in particular for the Jacobi operators of the hypersurfaces constructed in Theorems \ref{th-Sigma} and \ref{th_Al}. Nonetheless, we stress that our results hold for more general hypersurfaces (see Section \ref{sec-EF}).\\

We will see that the strict stability of the asymptotic cone implies the strict stability of the hypersurface, at least in the case of Lawson cones $C_{m,n}$ when $m+n\ge 8$.
\begin{theorem}
\label{th-strict-stability}
The hypersurfaces constructed in Theorem \ref{th_Al} are strictly stable.
\end{theorem}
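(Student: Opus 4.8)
The plan is to prove strict stability of the hypersurfaces $\Sigma^\pm_{m,n}$ by exploiting the positive Jacobi field $w(y):=|y\cdot\nu_\Sigma(y)|$ whose existence is guaranteed by Theorem \ref{th_Al}. The key principle is that a minimal hypersurface admitting a \emph{strictly positive} Jacobi field is stable, via the logarithmic (Allard--Almgren / ground-state) substitution. Concretely, for $\phi\in C^\infty_c(\Sigma\setminus{\rm sing}(\Sigma))$ write $\phi=w\psi$. Since $J_\Sigma w=0$ and $w>0$, an integration by parts yields the identity
\begin{equation}
\label{eqn-ground-state}
\mathcal{Q}_\Sigma(\phi)=\int_\Sigma\big(|\nabla_\Sigma\phi|^2-|A_\Sigma|^2\phi^2\big)\,d\sigma=\int_\Sigma w^2|\nabla_\Sigma\psi|^2\,d\sigma\ge 0,
\end{equation}
which already gives stability. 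The whole difficulty is to upgrade the nonnegativity in \eqref{eqn-ground-state} to a \emph{strict} inequality for $\phi\neq 0$.

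First I would establish \eqref{eqn-ground-state} rigorously, checking that the boundary terms vanish because $\phi$ has compact support away from ${\rm sing}(\Sigma)$; the $\Sigma^\pm_{m,n}$ are in fact smooth (no singular set), so this step is clean. From \eqref{eqn-ground-state} one sees that $\mathcal{Q}_\Sigma(\phi)=0$ forces $\nabla_\Sigma\psi\equiv 0$ on the support of $\phi$, i.e. $\psi$ is locally constant, hence $\phi=cw$ on each connected component. Thus strict stability reduces to showing that a nonzero compactly supported multiple of $w$ cannot occur — equivalently, that $w$ itself is not compactly supported, which is immediate since $w>0$ everywhere on $\Sigma$. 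This argument shows that $\mathcal{Q}_\Sigma(\phi)>0$ for all $\phi\in C^\infty_c\setminus\{0\}$ on each connected piece, giving the desired strict stability.

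The main obstacle I anticipate is that the naive argument above only yields strict stability in the weak sense that $\mathcal{Q}_\Sigma(\phi)>0$, but does not by itself produce a uniform positive lower bound or a positive bottom of the spectrum; for the stronger quantitative statements one would need the \emph{strict} positivity of the asymptotic cone, encoded in $\Lambda_0>0$ from \eqref{strict-stab-cone}. Here I would invoke the fact that for the Lawson cone $C_{m,n}$ with $m+n\ge 8$ one has $\lambda_0=-(N-1)$ and hence
$$\Lambda_0=\sqrt{\left(\frac{N-2}{2}\right)^2-(N-1)}>0.$$
The role of $\Lambda_0>0$ is to control the behaviour of $w$ at infinity: the positive Jacobi field $w$ decays (rather than merely stays positive) at the rate dictated by $\Lambda_0$, which guarantees that the weight $w^2$ in \eqref{eqn-ground-state} does not degenerate too fast and that the Hardy-type inequality associated to the cone transfers to $\Sigma$. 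This is where the hypothesis $m+n\ge 8$ is genuinely used.

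Putting these pieces together, the final step is to combine the pointwise positivity of $w$ (for the qualitative strict inequality on compactly supported functions) with the asymptotic decay governed by $\Lambda_0>0$ (to rule out test functions concentrating at infinity or near where $w\cdot\nu$ could vanish). The cleanest route is: (i) prove \eqref{eqn-ground-state}; (ii) deduce that equality forces $\phi$ proportional to $w$; (iii) observe $w>0$ has noncompact support so no nonzero compactly supported $\phi$ can be proportional to it, yielding $\mathcal{Q}_\Sigma(\phi)>0$ for all admissible $\phi\neq0$. I expect step (ii)--(iii) to be the conceptual heart, while the analytic control at infinity via $\Lambda_0$ is the technically delicate point one must not overlook.
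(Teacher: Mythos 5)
Your argument is correct as a proof of the theorem in the literal sense of Definition \ref{def-Sigma-stable}(2): the ground-state substitution $\phi=w\psi$ with $w=|y\cdot\nu_\Sigma(y)|>0$ gives $\mathcal{Q}_\Sigma(\phi)=\int_\Sigma w^2|\nabla_\Sigma\psi|^2\,d\sigma$, and since $\Sigma$ is connected, noncompact and smooth, equality forces $\psi$ to be a compactly supported constant, hence zero. This is, however, a genuinely different and far more elementary route than the paper's. The paper does not prove the qualitative statement directly; it proves the stronger quantitative bound of Proposition \ref{prop-strict-stability}, namely $\mathcal{Q}_\Sigma(\phi)\ge\lambda\int_\Sigma|A_\Sigma|^2\phi^2\,d\sigma$ for all $\phi$ in the weighted space $X$ of \eqref{def-X}, which strictly contains $C^\infty_c(\Sigma)$. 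Their method is a dichotomy argument on a minimizing sequence for the Rayleigh quotient after the conjugation $\phi=|y|^{-\frac{N-2}{2}}u$: if the sequence retains mass on a fixed compact set, the weak limit is a nontrivial Jacobi field in $\mathcal{W}^{2,2}_0(\Sigma,|y|^{-N})$, which is excluded by the injectivity result (Proposition \ref{prop-injective}); if the mass escapes to infinity, the conjugated potential $V$ is uniformly negative there (this is exactly where $\Lambda_0>0$, i.e.\ $m+n\ge 8$, enters), again a contradiction. What the quantitative version buys is essential downstream: in the proof of Proposition \ref{prop-fund-syst} the authors rule out Jacobi fields lying in $X$ that are \emph{not} compactly supported, and your qualitative inequality on $C^\infty_c$ says nothing about those. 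This is what the paper means when it asserts that the positive Jacobi field ``is not enough to prove their strict stability.''

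Your third and fourth paragraphs correctly identify this gap and name the right ingredients ($\Lambda_0>0$, the decay rate of the Jacobi field at infinity, a Hardy-type inequality transferred from the cone), but they remain a sketch: no inequality of the form $\mathcal{Q}_\Sigma(\phi)\ge\lambda\int_\Sigma|A_\Sigma|^2\phi^2\,d\sigma$ is actually derived. If your goal is only the statement as formally defined, your steps (i)--(iii) suffice and the discussion of $\Lambda_0$ is unnecessary; if your goal is the version the paper actually proves and later uses, the sketch would need to be completed, most naturally along the paper's compactness-plus-injectivity lines.
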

%We note that we have been able to prove strict stability for $O(m)\times O(n)$-invariant hypersurfaces only.\\ %Its proof relies on an injectivity result which holds true under slightly more general assumptions, that is the cone is only required to be minimising and fulfil (\ref{strict-stab-cone}).\\
Then we will consider hypersurfaces which are asymptotic to a fixed Lawson cone in lower dimension, so that condition (\ref{strict-stab-cone}) is not satisfied. 
\begin{theorem}[\cite{ABPRS, M}]
\label{th-Sigma-low-dim}
Let $m,n\ge 2$, $m+n\le 7$. Then there exists a unique complete, embedded, $O(m)\times O(n)$ invariant minimal hypersurface $\Sigma_{m,n}$ such that 
\begin{enumerate}
\item \label{Sigma-as} $\Sigma_{m,n}$ is asymptotic to the cone $C_{m,n}$ at infinity,
\item \label{Sigma-osc} $\Sigma_{m,n}$ intersects $C_{m,n}$ infinitely many times,
\item \label{Sigma-ort} $\Sigma_{m,n}$ meets $\R^m\times\{0\}$ orthogonally,
\item \label{Sigma-dist} $dist(\Sigma_{m,n}, C_{m,n})=1$.
\end{enumerate}
\end{theorem}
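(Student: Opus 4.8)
The plan is to exploit the $O(m)\times O(n)$ symmetry to reduce the minimal surface equation to a planar ODE, and then to read off all four properties from a phase-plane analysis at the cone. Writing $s=|x|$ and $t=|y|$, every $O(m)\times O(n)$-invariant hypersurface is the orbit under rotations of a profile curve $\gamma=(s,t)$ in the open quarter-plane $\{s>0,\,t>0\}$, and its area element is proportional to $s^{m-1}t^{n-1}\,d\ell$, where $d\ell$ is the Euclidean arclength of $\gamma$. Hence such a hypersurface is minimal if and only if $\gamma$ is a geodesic of the conformal metric $g=(s^{m-1}t^{n-1})^2(ds^2+dt^2)$, and $C_{m,n}$ itself corresponds to the straight ray $\{t=\sqrt{\tfrac{n-1}{m-1}}\,s\}$, which is one such geodesic. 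First I would derive the explicit second-order ODE for $\gamma$ from the vanishing of the mean curvature, i.e. $k_\gamma+(m-1)\tfrac{\nu_s}{s}+(n-1)\tfrac{\nu_t}{t}=0$, where $k_\gamma$ is the signed curvature of the profile and $\nu=(\nu_s,\nu_t)$ its unit normal.

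Second, I would pass to coordinates adapted to the scaling invariance of the cone. Setting $r=\sqrt{s^2+t^2}$, $\tau=\log r$ and letting $\alpha$ denote the polar angle of $\gamma$, the homogeneity $f(\lambda s,\lambda t)=\lambda^{N-1}f(s,t)$ (with $f=s^{m-1}t^{n-1}$, $N=m+n-1$) makes the geodesic equation autonomous in $\tau$. This yields a first-order planar system for $(\alpha,\dot\alpha)$ whose equilibrium $(\alpha_0,0)$ corresponds exactly to the ray $C_{m,n}$, with $\tan\alpha_0=\sqrt{\tfrac{n-1}{m-1}}$. Linearising at this equilibrium produces a characteristic equation whose roots are $-\tfrac{N-2}{2}\pm\sqrt{(\tfrac{N-2}{2})^2+\lambda_0}$ with $\lambda_0=-(N-1)$, the very quantity governing the stability of the cone in (\ref{strict-stab-cone}). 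When $m+n\le 7$, i.e. $N\le 6$, one checks $(\tfrac{N-2}{2})^2-(N-1)<0$, so the roots are complex with negative real part: the equilibrium is a stable spiral in the direction $\tau\to+\infty$. A trajectory entering this focus necessarily crosses the line $\alpha=\alpha_0$ infinitely many times while its amplitude decays like $r^{-(N-2)/2}$, giving simultaneously property (\ref{Sigma-as}) (asymptotic to $C_{m,n}$ as $r\to\infty$) and property (\ref{Sigma-osc}) (infinitely many intersections with the cone).

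Third, I would produce the surface by shooting. Launch the profile geodesic from a point $(s_0,0)$ on the $s$-axis with tangent orthogonal to that axis; even reflection across $\{t=0\}$ then extends $\gamma$ smoothly and yields a complete embedded hypersurface meeting $\R^m\times\{0\}$ orthogonally, which is property (\ref{Sigma-ort}). The scaling invariance sends such a solution to a one-parameter family $\lambda\gamma$, all with the same qualitative behaviour, so the normalisation ${\rm dist}(\Sigma_{m,n},C_{m,n})=1$ in property (\ref{Sigma-dist}) selects exactly one representative; this also gives uniqueness within the symmetry class, since two such surfaces would differ by a scaling and the distance constraint forces the scale factor to be $1$.

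The main obstacle is the global (as opposed to local, linearised) phase-plane argument: I must show that the particular orbit issued orthogonally from the axis actually falls into the focus at $(\alpha_0,0)$ as $\tau\to+\infty$, rather than escaping to infinity, spiralling into one of the degenerate axis equilibria $\alpha=0$ or $\alpha=\tfrac{\pi}{2}$, or failing to be embedded. This requires a genuine global analysis — typically a monotonicity or Lyapunov functional for the autonomous system together with a careful treatment of the metric degeneracy $f\to 0$ at the axes, where smoothness of the reflected hypersurface and the correct boundary behaviour of $\gamma$ must be verified directly. Establishing that the oscillation amplitude is monotone, so that the distance to the cone is attained exactly once and embeddedness is preserved, is the delicate quantitative heart of the argument.
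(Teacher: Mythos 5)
The paper offers no proof of Theorem \ref{th-Sigma-low-dim}: the statement is imported from \cite{ABPRS} and \cite{M}, so there is no internal argument to compare against. Your reduction is precisely the one used in those references: the $O(m)\times O(n)$-invariant hypersurface is the orbit of a profile curve that is a geodesic of the conformal metric $(a^{m-1}b^{n-1})^2(da^2+db^2)$, the cone is the straight ray with slope $\sqrt{(n-1)/(m-1)}$, and after the Emden--Fowler substitution the linearisation at the corresponding equilibrium has the indicial roots $-\frac{N-2}{2}\pm\sqrt{\left(\frac{N-2}{2}\right)^2-(N-1)}$, which are non-real exactly when $N=m+n-1\le 6$. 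This correctly produces items (\ref{Sigma-as}) and (\ref{Sigma-osc}) simultaneously from the spiral, and the orthogonal shooting from the $a$-axis plus even reflection gives (\ref{Sigma-ort}).

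As a proof, however, the proposal is incomplete exactly where you say it is, and that step is not a technicality: without it nothing guarantees that the orbit launched orthogonally from $(s_0,0)$ ever enters the basin of the focus rather than escaping or converging to the boundary. The ingredients that close the argument in \cite{ABPRS} (and in the classical Bombieri--De Giorgi--Giusti analysis from which it descends) are: (i) the reduced autonomous system in $(\alpha,\dot\alpha)$ is dissipative --- the damping coefficient is $N-2>0$ up to lower-order corrections --- so Bendixson--Dulac excludes periodic orbits and a Lyapunov function confines the shooting orbit to the open quadrant and forces its $\omega$-limit set to be the unique interior equilibrium $(\alpha_0,0)$; (ii) the degeneracy of the conformal factor at the axes must be analysed both to prove smoothness of the reflected hypersurface where the $S^{n-1}$ factor collapses and to rule out forward convergence to the boundary equilibria $\alpha\in\{0,\pi/2\}$; (iii) for uniqueness one must also check that every hypersurface in the stated class arises from the shooting family (this follows from ODE uniqueness at the orthogonal axis crossing) and interpret the normalisation ${\rm dist}(\Sigma_{m,n},C_{m,n})=1$ as the maximal deviation from the cone --- the infimum distance is zero since $\Sigma_{m,n}$ meets $C_{m,n}$ --- which scales linearly under dilation and therefore selects exactly one member of the family. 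With (i)--(iii) supplied, your outline becomes the proof given in the cited references.
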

For such hypersurfaces it is relevant to compute the Morse index, defined as
\begin{equation}
Morse(\Sigma):=\sup\{\dim(X):\,X\text{ subspace of }C^\infty_c(\Sigma):\,\mathcal{Q}_\Sigma(\phi\backslash sing(\Sigma))<0,\,\forall\,\phi\in X\backslash\{0\}\}.
\end{equation}
It follows from the definition that $Morse(\Sigma)=0$ if and only if $\Sigma$ is stable. For our hypersurfaces we have the following result.
\begin{theorem}
\label{th-Morse-infinite}
The hypersurfaces constructed in Theorem \ref{th-Sigma-low-dim} have infinite Morse index.
\end{theorem}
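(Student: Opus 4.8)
The plan is to exploit the scale invariance of the asymptotic cone $C_{m,n}$ together with its instability in this dimension range. Since $N=m+n-1\le 6$, the quantity $(\frac{N-2}{2})^2+\lambda_0=(\frac{N-2}{2})^2-(N-1)$ is strictly negative (as one checks for $3\le N\le 6$), so condition \eqref{strict-stab-cone} fails and the form $\mathcal{Q}_{C_{m,n}}$ takes negative values. The strategy has two parts: first produce an infinite-dimensional space of negative directions for the quadratic form on the cone, and then transplant these functions to $\Sigma_{m,n}$ using that $\Sigma_{m,n}$ is asymptotic to $C_{m,n}$ at infinity.

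On the cone I would first separate variables. Writing $y=r\theta$ with $\theta\in\Gamma=C_{m,n}\cap S^N$ and letting $\Theta$ be a bottom eigenfunction of $-J_\Gamma$ (eigenvalue $\lambda_0$), a function $\phi(r\theta)=\psi(r)\Theta(\theta)$ reduces $\mathcal{Q}_{C_{m,n}}$ to a weighted one-dimensional functional $\int_0^\infty (\psi'(r)^2-\frac{c}{r^2}\psi(r)^2)\,r^{N-1}\,dr$, with $c$ determined by $\lambda_0$ and $|A_{C_{m,n}}|^2$. The failure of \eqref{strict-stab-cone} is precisely the failure of the corresponding Hardy inequality, so one can choose a compactly supported $\psi$, hence a $\phi_0$ supported in a fixed annulus $\{1\le r\le\rho\}$ away from the vertex, with $\mathcal{Q}_{C_{m,n}}(\phi_0)<0$. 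I would then use the dilation $\phi_R(y):=\phi_0(y/R)$, under which a direct change of variables (using that $|A_{C_{m,n}}|^2$ scales like $r^{-2}$ and $d\sigma$ like $r^N$) gives $\mathcal{Q}_{C_{m,n}}(\phi_R)=R^{N-2}\mathcal{Q}_{C_{m,n}}(\phi_0)<0$ for every $R>0$. Choosing a geometric sequence of scales $R_k=\Lambda^k$ with $\Lambda>\rho$, the functions $\phi_{R_k}$ have pairwise disjoint annular supports, so $\mathcal{Q}_{C_{m,n}}$ is negative definite on their span.

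To transfer this to $\Sigma_{m,n}$ I would use that the rescalings $R^{-1}\Sigma_{m,n}$ converge, as $R\to\infty$, to $C_{m,n}$ in $C^2_{loc}(\R^{N+1}\setminus\{0\})$, the tangent cone at infinity being $C_{m,n}$ by Theorem \ref{th-Sigma-low-dim}(\ref{Sigma-as}). Consequently, on each fixed annulus the induced metric, the volume element and the potential $|A|^2$ of $R_k^{-1}\Sigma_{m,n}$ converge to those of $C_{m,n}$, and for $k$ large $R_k^{-1}\Sigma_{m,n}$ is a $C^2$-small graph over $C_{m,n}$ on $\{1\le r\le\rho\}$. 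Since $\Sigma_{m,n}$ is smooth, I may define $\psi_k\in C^\infty_c(\Sigma_{m,n})$ as the transplant of $\phi_0$ at scale $R_k$ via this nearest-point/normal-graph correspondence. Computing $\mathcal{Q}_{\Sigma_{m,n}}(\psi_k)$ and factoring out the common scaling then yields $\mathcal{Q}_{\Sigma_{m,n}}(\psi_k)=R_k^{N-2}\big(\mathcal{Q}_{C_{m,n}}(\phi_0)+o(1)\big)$ as $k\to\infty$; since $\mathcal{Q}_{C_{m,n}}(\phi_0)<0$ and $R_k^{N-2}>0$, we get $\mathcal{Q}_{\Sigma_{m,n}}(\psi_k)<0$ for all large $k$. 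Passing to a subsequence with pairwise disjoint supports, $\mathcal{Q}_{\Sigma_{m,n}}$ is negative definite on the infinite-dimensional space $\mathrm{span}\{\psi_k\}$, so $Morse(\Sigma_{m,n})=\infty$.

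The main obstacle is the transplantation estimate: one must show that replacing $C_{m,n}$ by $\Sigma_{m,n}$ perturbs the quadratic form by a factor $1+o(1)$ uniformly on the annulus, i.e.\ that the error is genuinely lower order than the leading scale-invariant term. This needs quantitative asymptotics of $\Sigma_{m,n}$, namely decay of the normal graph function and its first two derivatives relative to the scale $R_k$, which is delicate because $\Sigma_{m,n}$ crosses $C_{m,n}$ infinitely often by Theorem \ref{th-Sigma-low-dim}(\ref{Sigma-osc}), so the correspondence oscillates from scale to scale. The $C^2_{loc}$ convergence of the blow-downs is what forces every error term to vanish in the limit, and the decisive structural point is that the factor $R_k^{N-2}$ multiplies all terms simultaneously, so the sign of $\mathcal{Q}_{\Sigma_{m,n}}(\psi_k)$ is dictated entirely by the fixed negative number $\mathcal{Q}_{C_{m,n}}(\phi_0)$.
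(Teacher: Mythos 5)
Your proposal is correct and follows essentially the same strategy as the paper: produce infinitely many negative directions for the quadratic form of the unstable cone $C_{m,n}$, supported in pairwise disjoint annuli escaping to infinity, and then transfer them to $\Sigma_{m,n}$ using that the hypersurface is asymptotic to the cone. The only (minor) difference is how the cone-level negative directions are produced --- you dilate a single Hardy-type bump and use the scaling identity $\mathcal{Q}_{C}(\phi_R)=R^{N-2}\mathcal{Q}_{C}(\phi_0)$, whereas the paper truncates the explicit oscillating radial solution $r^{-\frac{N-2}{2}}\cos(\Lambda(\lambda,N)\log r)$ of a perturbed Jacobi eigenvalue ODE at its consecutive zeros --- and your treatment of the transplantation error is, if anything, more careful than the paper's.
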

Theorem \ref{th-Morse-infinite} shows that these hypersurfaces are far from being stable. Roughly speaking, the stability properties of the cone are preserved when passing to the asymptotic smooth minimal hypersurface.\\

%In the sequel, $\Sigma$ will be either $\Sigma^+$ or $\Sigma^-$. 
Moreover, we are interested in classifying the bounded Jacobi fields of our hypersurfaces, that is bounded solutions to the Jacobi equation (\ref{Jacobi-eq}). Since the zero mean curvature equation is invariant under translations, rotations and dilations, if we consider a normal variation such that, for any $\eps\in(-\delta,\delta)$, $\Sigma_\eps$ is a translated, rotated or dilated copy of $\Sigma$, we have $H_{\Sigma_\eps}=0$ for any $\eps\in(-\delta,\delta)$. Keeping the above notations and differentiating in $\eps$ we have
$$0=\frac{d}{d\eps}\bigg|_{\eps=0}H_{\Sigma_\eps}=J_\Sigma\phi$$
(see \cite{Nunes}), thus there are at most $2N+3$ linearly independent Jacobi fields coming from such geometric transformations. These Jacobi fields and their linear combinations are called \textit{geometric Jacobi fields} and the vector space of such Jacobi fields will be denoted by $G(\Sigma)$. We note that the dimension of $G(\Sigma)$ can be strictly less than $2N+3$ in case $\Sigma$ enjoys some symmetry properties. The Jacobi fields coming from rotations constitute a subspace of dimension at most $N+1$ and are unbounded, so that the space of bounded geometric Jacobi fields has dimension at most $N+2$. In fact, it is generated by the Jacobi fields $\{ \nu_\Sigma(y)\cdotp e_i\}_{1\le i\le N+1}$ associated to translations and by the Jacobi field $y\cdotp\nu_\Sigma(y)$ associated to dilation provided it is bounded.
\begin{definition}
A minimal hypersurface $\Sigma\subset\R^{N+1}$ is said to be nondegenerate if all its bounded Jacobi fields are geometric.
\end{definition}
This definition was given, for instance, in \cite{DKW} for the Costa-Hofmann-Meeks surfaces.\\

Since we are often interested in hypersurfaces enjoying some symmetry properties, we introduce the concept of $S$-nondegenerate hypersurface.
\begin{definition}
Let $S$ be a subgroup of $O(N+1)$ and let $\Sigma\subset\R^{N+1}$ be an $S$-invariant minimal hypersurface. 
\begin{itemize}
    \item We say that a function $\phi:\Sigma\to\R$ is $S$-invariant if $\phi\circ\rho=\phi$, for any $\rho\in S$.
    \item We say that $\Sigma$ is $S$-nondegenerate if all its bounded $S$-invariant Jacobi fields are geometric.
\end{itemize}

\end{definition}
We note that nondegeneracy implies $S$-nondegeneracy, while the converse is not necessarily true.\\

We have nondegeneracy results for our hypersurfaces.
\begin{theorem}
\label{th-nondegeneracy}
\begin{enumerate}
 \item The hypersurfaces constructed in Theorem \ref{th_Al} are nondegenerate.
 %for any $m,n\ge 2$, $m+n\ge 8$.
 \item The hypersurface $\Sigma_{m,n}$ constructed in Theorem \ref{th-Sigma-low-dim} is $O(m)\times O(n)$-nondegenerate, 
 for any $m,n\ge 2$, $m+n\le 7$.
\end{enumerate}
\end{theorem}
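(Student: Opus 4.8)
The plan is to prove both statements by a separation of variables adapted to the $O(m)\times O(n)$ symmetry, turning the Jacobi equation $J_\Sigma\phi=0$ into a countable family of second order ODEs, and then to classify the bounded solutions of each ODE by combining an indicial analysis near the asymptotic cone with the injectivity result and the strict stability of Theorem \ref{th-strict-stability}. Concretely, I would parametrize the hypersurface by a profile curve together with the two spheres, writing a point as $(\mathfrak a(t)\omega,\mathfrak b(t)\sigma)$ with $\omega\in S^{m-1}$ and $\sigma\in S^{n-1}$. Since $|A_\Sigma|^2$ and the metric coefficients are $O(m)\times O(n)$-invariant, they depend only on $t$, so expanding $\phi=\sum_{j,k}\phi_{j,k}(t)Y_j(\omega)Z_k(\sigma)$ in products of spherical harmonics (with $-\Delta_{S^{m-1}}Y_j=\mu_jY_j$ and $-\Delta_{S^{n-1}}Z_k=\nu_kZ_k$) decouples $J_\Sigma\phi=0$ into scalar equations $L_{j,k}\phi_{j,k}=0$, whose potential contains the centrifugal terms $\mu_j/\mathfrak a^2+\nu_k/\mathfrak b^2$. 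A bounded Jacobi field is then precisely a collection of bounded solutions in each mode, so the problem reduces to a mode-by-mode classification.

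For part (1) I would first carry out the indicial analysis and locate the geometric fields among the modes. Because $\Sigma$ is asymptotic to $C_{m,n}$, as $t\to\infty$ the operator $L_{j,k}$ is a small perturbation of the corresponding cone operator, whose two solutions behave like $r^{\gamma^{j,k}_\pm}$ with $\gamma^{j,k}_\pm=-\tfrac{N-2}{2}\pm\Lambda_{j,k}$ and $\Lambda_{j,k}=\sqrt{(\tfrac{N-2}{2})^2+\lambda_{j,k}}$, where $\lambda_{j,k}$ is the eigenvalue of $-J_\Gamma$ in that mode. A direct computation places the bounded geometric fields in the low modes: the dilation $y\cdot\nu_\Sigma$ lives in mode $(0,0)$, where $\lambda_{0,0}=\lambda_0=-(N-1)<0$; the $m+n$ translations $\nu_\Sigma\cdot e_i$ live in modes $(1,0)$ and $(0,1)$, where $\lambda=0$; and the unbounded rotations live in mode $(1,1)$, where $\lambda_{1,1}=N-1>0$. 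Since $\lambda_{j,k}$ increases strictly with $j$ and $k$, every remaining mode has $\lambda_{j,k}>0$. At the inner end, where one of the spheres collapses on the smooth surface, the center is a regular singular point of $L_{j,k}$ admitting a one-dimensional space of solutions that extend smoothly across the collapse. The classification then proceeds by the sign of $\lambda_{j,k}$. In the modes $(0,0),(1,0),(0,1)$ with $\lambda_{j,k}\le0$ both asymptotic solutions are bounded, so the one-dimensional space of solutions regular at the center is automatically bounded; as $y\cdot\nu_\Sigma$, respectively each translation, is a globally smooth nonzero Jacobi field in this space, it spans it, and these modes contribute exactly the geometric fields. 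In every mode with $\lambda_{j,k}>0$ one has $\gamma^{j,k}_+>0$, so the growing solution is genuinely unbounded; hence a bounded solution must be the purely decaying branch $\sim r^{\gamma^{j,k}_-}$, and since $\Lambda_{j,k}\ge\tfrac{N-2}{2}>1$ for $N\ge7$, this branch lies in $H^1(\Sigma)$. Together with regularity at the center it yields an $H^1$ Jacobi field with $\mathcal Q_\Sigma(\phi)=0$, which must vanish by strict stability and the injectivity result, so no nonzero bounded field occurs in these modes. Summing over modes shows every bounded Jacobi field is a combination of $y\cdot\nu_\Sigma$ and the translations, which proves part (1).

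For part (2) the $O(m)\times O(n)$-invariance forces a bounded invariant Jacobi field to live in the single mode $(0,0)$, so the problem reduces to the scalar equation $L_{0,0}\psi=0$. Here the cone is unstable, $(\tfrac{N-2}{2})^2+\lambda_0<0$, so the indicial roots are complex, $-\tfrac{N-2}{2}\pm i\beta$, and both solutions decay and oscillate as $r\to\infty$; in particular both are bounded, consistently with $\Sigma_{m,n}$ crossing $C_{m,n}$ infinitely often. Strict stability being unavailable, I would instead use the regularity imposed by the orthogonal intersection with $\R^m\times\{0\}$, i.e.\ the collapse of $S^{n-1}$, which again selects a one-dimensional space of invariant solutions smooth on $\Sigma_{m,n}$; since $y\cdot\nu_\Sigma$ is a nonzero smooth bounded Jacobi field in this space, it spans it, and every bounded invariant Jacobi field is a multiple of the dilation field, hence geometric.

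The main obstacle, in both parts, is the matched asymptotic analysis at the inner collapse: one must show that the center is a regular singular point whose regular solution space is exactly one dimensional, control the coefficients of $L_{j,k}$ where $\mathfrak a$ or $\mathfrak b$ vanishes, and verify that the decaying branch selected at infinity in the high modes is regular enough to enter the domain of the injectivity result. The eigenvalue bookkeeping ($\lambda_{1,0}=\lambda_{0,1}=0$, $\lambda_{1,1}=N-1$, strict monotonicity) and the precise decay rates needed for the $H^1$ membership are routine by comparison with the cone, but it is the interplay between boundedness at infinity and smoothness at the collapse that makes the symmetry reduction conclusive.
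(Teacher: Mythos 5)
Your proposal is correct and follows essentially the same route as the paper: Fourier decomposition into products of spherical harmonics on $S^{m-1}\times S^{n-1}$, an indicial/asymptotic analysis of the resulting ODEs at the collapse point and at infinity (the paper's Lemma \ref{lemma-syst} and Proposition \ref{prop-fund-syst}), strict stability to exclude solutions that are simultaneously regular at the centre and decaying at infinity in the modes with $\lambda_{j,k}>0$, and for the low-dimensional case a reduction to the single radial mode with oscillatory indicial roots where boundedness at the collapse selects the one-dimensional span of $y\cdot\nu_\Sigma(y)$. The only cosmetic difference is that in the low modes you bypass the paper's use of strict stability to pin down the exact growth of the regular branch, observing instead that both indicial branches are already bounded there.
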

We will actually prove more. We will see that the bounded Kernel of the Jacobi operator of $\Sigma$ exactly has dimension $N+2$ if $\Sigma:=\Sigma^\pm_{m,n}$ is one of the hypersurfaces constructed in Theorem \ref{th_Al}. More precisely, there is an $O(m)\times O(n)$-invariant Jacobi field coming from dilation, given by the non-sign changing function $y\mapsto y\cdotp\nu_\Sigma(y)$, and $N+1$ linearly independent Jacobi fields coming from translations which do not enjoy such a symmetry property (see Theorem \ref{th-Jac-Sigma} below). For the hypersurfaces constructed in Theorem \ref{th-Sigma-low-dim}, we prove that the space of bounded $O(m)\times O(n)$-invariant Jacobi fields has dimension $1$ and it is generated by the Jacobi field coming from dilation (see Theorem \ref{th-Jabobi-Sigma-low-dim}).
\begin{remark}
In particular, Theorems \ref{th-strict-stability} and \ref{th-Jac-Sigma} show that strict stability does not exclude the existence of nontrivial bounded and even decaying Jacobi fields. It just rules out the existence of nontrivial too fast decaying Jacobi fields. More precisely, it excludes the existence of Jacobi fields in the space $X$ defined in (\ref{def-X}).
\end{remark}
%Finally we consider the Jacobi equation 
%\begin{equation}
%\label{Jacobi-eq-rhs}
%J_\Sigma \phi=f    
%\end{equation}
%with a right-hand side $f$ in some suitable function space and construct a right inverse of the Jacobi operator of hypersurfaces which are asymptotic to some given stable cone. As a particular case, our construction applies to  hypersurfaces constructed in Theorems \ref{th-Sigma} and \ref{th_Al}. In such cases, this right inverse is already known and was used for example in \cite{PW,AK,AKR} to construct stable or highly unstable solutions to the Allen-Cahn equation respectively. However our result holds true for more general hypersurfaces and we also show that the right inverse preserves the symmetry properties of $\Sigma$ and of the right hand side $f$, if there are some, at least if the elements of the Kernel satisfy such symmetries. In other words, if $\Sigma$  and $f$ are $S$-invariant, then the solution is $S$-invariant too, at least under some suitable symmetry requirement about the Kernel.\\

The plan of the paper is the following. In section \ref{sec-notations} we introduce some basic notions and notations, in section \ref{sec-EF} we prove an injectivity result for the Jacobi operator of a large class of minimal hypersurfaces which are asymptotic to a stable cone at infinity, including the ones mentioned in Theorems \ref{th-Sigma} and \ref{th_Al}, in Section \ref{sec-strict-stability} we prove Theorems \ref{th-strict-stability} and \ref{th-Morse-infinite} about the stability properties of our hypersurfaces and section \ref{sec-Jacobi} will be devoted to nondegeneracy results, that is the proof of Theorem \ref{th-nondegeneracy}. %The issue of the right inverse of the Jacobi operator will be dealt with in section \ref{sec-inverse}.

\section{Preliminaries and notations}\label{sec-notations}

In this paper we say that a hypersurfaces  $\Sigma\subset\R^{N+1}$ is asymptotic to a cone $$C:=\{r\theta:\,\theta\in\Gamma,\,r>0\}\subset\R^{N+1}$$ 
if, outside a compact set, $\Sigma$ is the normal graph of a smooth decaying function over $C$. More precisely, we introduce the following Definition.
\begin{definition}\label{def-as-cone}
We say that $\Sigma$ is asymptotic to $C$ if there exists $R>0$ such that $$\Sigma=K_R\cup\Sigma_R,$$
where 
\begin{equation}
\label{Sigma-normal-graph}
\Sigma_R:=\{r\theta+w(r,\theta)\nu_C(\theta):\,\theta\in\Gamma,\,r>R\},
\end{equation}
$\nu_C(\theta)=\nu_C(r\theta)$ is a choice of normal vector to $C$, which is actually independent of $r$ due to homogeneity, $w:(R,\infty)\times\Gamma\to \R$ is a smooth function decaying as $r\to\infty$, uniformly in $\theta\in\Gamma$, and $K_R:=\Sigma\backslash \Sigma_R$ is compact. 
\end{definition}
\color{black}In \cite{AA,Simon} the authors give a more abstract definition of hypersurface asymptotic to a cone. However, they actually show that the two definitions are equivalent. See \cite{HS} page 105 and the citations there.\\

In case $C\subset\R^{N+1}$ fulfils (\ref{strict-stab-cone}), or equivalently $C$ is stable, the decay rate of $w$ is discussed in Theorem $4.2$ of \cite{PW}. More precisely, %setting
%\begin{equation}
%\label{def-gamma}
%\gamma_\pm:=-\frac{N-2}{2}\pm\Lambda_0,
%\end{equation}
we have the following result.
\begin{theorem}[\cite{PW}]
\label{th-dec-w}
Let $C\subset\R^{N+1}$ be a stable cone. Let $\Sigma$ be a minimal hypersurface which, outside a compact set, is the normal graph over $C$ of a smooth function $w:(R,\infty)\times\Gamma\to\R$ which never vanishes and $w(r,\theta)\to 0$ as $r\to\infty$, uniformly in $\theta\in\Gamma$. Then, in the above notations, we have
\begin{equation}
\label{dec-w-slow}
    w(r,\theta)=(b\log r+a)r^{-\frac{N-2}{2}+\Lambda_0}(1+O(r^{-\delta}))
\end{equation}
or
\begin{equation}
\label{dec-w-fast}
w(r,\theta)=ar^{-\frac{N-2}{2}-\Lambda_0}(1+O(r^{-\delta}))
\end{equation}
as $r\to\infty$ uniformly in $\theta\in\Gamma$, for some $\delta>0$, where $a\ne 0$ and $b=0$ unless $\Lambda_0=0$.%, in which case $b\ne 0$. 
These relations can be differentiated.
\end{theorem}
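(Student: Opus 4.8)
The plan is to reduce the nonlinear minimal surface equation for the graph to a spectral analysis of the Jacobi operator $J_C$ of the cone, for which the exponents $-\frac{N-2}{2}\pm\Lambda_0$ appear as \emph{indicial roots}, and then to control the nonlinear and off-diagonal corrections so that the formal leading term persists with an $O(r^{-\delta})$ error. First I would write $\Sigma_R$ as the normal graph $r\theta+w(r,\theta)\nu_C(\theta)$ and impose $H_\Sigma=0$. Exploiting the $r$-homogeneity of $C$, one expresses $H_\Sigma$ as a quasilinear operator in $w$ whose linearization at $w=0$ is exactly $J_C=\Delta_C+|A_C|^2$; thus $w$ solves $J_C w=\mathcal{R}(w)$, where $\mathcal{R}(w)$ collects the terms that are at least quadratic in $w$ and its (conically weighted) derivatives, hence a genuine lower-order perturbation as $w\to 0$.

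Next I would separate variables. In polar coordinates the induced metric of $C$ is $dr^2+r^2g_\Gamma$, so $\Delta_C=\partial_r^2+\frac{N-1}{r}\partial_r+\frac1{r^2}\Delta_\Gamma$, while $|A_C|^2=|A_\Gamma|^2/r^2$. Diagonalizing $-J_\Gamma$ with eigenpairs $(\lambda_k,\psi_k)$ and writing $w=\sum_k w_k(r)\psi_k(\theta)$ turns $J_C w=0$ into the decoupled Euler equations $w_k''+\frac{N-1}{r}w_k'-\frac{\lambda_k}{r^2}w_k=0$, with indicial roots $\gamma_k^\pm=-\frac{N-2}{2}\pm\sqrt{(\tfrac{N-2}{2})^2+\lambda_k}$. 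The condition that these be real is precisely the stability threshold \eqref{strict-stab-cone}, and for $k=0$ the roots are exactly $-\frac{N-2}{2}\pm\Lambda_0$; when $\Lambda_0=0$ they coincide and the second independent solution acquires a factor $\log r$, which is the source of the term $b\log r$ in \eqref{dec-w-slow}.

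I would then isolate the ground mode using the never-vanishing hypothesis. Each decaying solution is, to leading order, a combination of the $\gamma_k^\pm$-solutions, and the slowest-decaying surviving term dominates as $r\to\infty$. Since $\psi_0$ is the unique ground state and has constant sign whereas every $\psi_k$ with $k\ge 1$ changes sign, the fact that $w$ never vanishes forces the asymptotically dominant term to be a multiple of $\psi_0$; equivalently, the slow components of all sign-changing modes must vanish. Hence the asymptotics are governed by the $k=0$ equation, and one gets the stated dichotomy: either the coefficient of $r^{\gamma_0^+}\psi_0$ is nonzero (slow decay \eqref{dec-w-slow}, with $b=0$ away from resonance), or it vanishes and $w\sim a\,r^{\gamma_0^-}\psi_0$ (fast decay \eqref{dec-w-fast}); in both cases $a\ne 0$ because $w\not\equiv 0$, and the logarithm occurs only in the resonant case $\Lambda_0=0$.

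Finally I would upgrade this formal expansion to the rigorous estimate and justify differentiation. Working in conically weighted Hölder spaces, and using that $J_C$ (and the full linearization, which differs from it only by coefficients decaying in $r$) is an isomorphism for weights that avoid the indicial roots, one shows that $w$ minus its leading indicial term lies in a strictly better-weighted space, which yields the $O(r^{-\delta})$ remainder with $\delta$ governed both by the gap to the next indicial root and by the decay of $\mathcal{R}(w)$. A contraction/fixed-point argument absorbs the quadratic coupling $\mathcal{R}$, which decays fast enough not to shift the leading exponent, and the differentiability of the relations follows from interior Schauder estimates applied on dyadic annuli after rescaling. The main obstacle is exactly this last step: controlling the nonlinear mode-coupling together with the resonant logarithm so that the formal leading term is genuinely unperturbed, and turning the formal sign-definiteness of the dominant mode into a rigorous exclusion of the higher, sign-changing modes.
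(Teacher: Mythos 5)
The paper does not actually prove this statement: Theorem \ref{th-dec-w} is imported from Theorem 4.2 of \cite{PW}, and the text around it offers no argument, so there is no in-paper proof to compare yours against. Judged on its own, your outline follows the correct and standard strategy --- essentially the one used in \cite{PW}, building on Allard--Almgren and Simon: linearize $H_\Sigma=0$ over the cone to get $J_C w=\mathcal{R}(w)$ with $\mathcal{R}$ quadratic, separate variables in the eigenbasis of $-J_\Gamma$ to obtain Euler equations with indicial roots $-\frac{N-2}{2}\pm\Lambda_k$, use that $\psi_0$ has a sign while every higher eigenfunction changes sign to force the leading term onto the ground mode via the never-vanishing hypothesis, and read off the $\log r$ from the double root when $\Lambda_0=0$. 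The exponents and the dichotomy all come out correctly.

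That said, the steps you defer are where the theorem actually lives, and two of them are genuine gaps rather than routine details. First, from the hypothesis $w\to 0$ alone you must extract an initial algebraic rate $w=O(r^{-\epsilon})$ before any weighted-space bootstrap can start; this requires a three-annuli/decay-improvement argument, and it is complicated by the fact that for a general stable cone there may be indicial roots $-\frac{N-2}{2}+\Lambda_k$ of sign-changing modes lying strictly between $-\frac{N-2}{2}+\Lambda_0$ and $0$. Hence your sign argument cannot be applied once to ``the'' dominant term: at each stage of the iteration you must establish that a well-defined leading indicial term exists, kill it by the sign condition when it belongs to a $k\ge 1$ mode, and then improve the weight past that root, taking care of possible coincidences between indicial roots of distinct modes. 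Second, the assertion that $a\ne 0$ in the fast-decay case \eqref{dec-w-fast} requires a unique-continuation-at-infinity statement (a nontrivial solution cannot decay faster than every indicial rate); it does not follow merely from $w\not\equiv 0$. With those two ingredients supplied, your sketch matches the proof in \cite{PW}.
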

Theorem \ref{th-dec-w} applies to the hypersurfaces constructed in Theorem \ref{th-Sigma} if (\ref{strict-stab-cone}) holds and to the hypersurfaces constructed in Theorem \ref{th_Al}, since the Lawson cone $C_{m,n}$ fulfils (\ref{strict-stab-cone}) for any $m,n\ge 2$, $m+n\ge 8$.
\begin{remark}
\label{rem-dec-w}
    If $\Sigma$ is one of the hypersurfaces constructed in Theorem (\ref{th_Al}), then Theorem (\ref{th-dec-w}) holds with $b=0$, because $$\left(\frac{N-2}{2}\right)^2+\lambda_0=\left(\frac{N-2}{2}\right)^2-(N-1)>0$$
    since $N\ge 7$.
\end{remark}
The next Lemma shows that the result of Theorem \ref{th-dec-w} is consistent with the assumptions, that is the function $w$ is actually decaying at infinity.
\begin{lemma}
If (\ref{strict-stab-cone}) holds, then in the above notations we have $-\frac{N-2}{2}+\Lambda_0<0$.
\end{lemma}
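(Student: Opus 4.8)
The plan is to reduce the claimed inequality to the strict negativity of the bottom eigenvalue $\lambda_0$ of $-J_\Gamma$, a fact announced in the Introduction. By definition $\Lambda_0=\sqrt{\left(\frac{N-2}{2}\right)^2+\lambda_0}$, and hypothesis (\ref{strict-stab-cone}) is precisely what makes the radicand nonnegative, so that $\Lambda_0$ is a well-defined real number with $\Lambda_0\ge 0$. Since the desired conclusion $-\frac{N-2}{2}+\Lambda_0<0$ reads $\Lambda_0<\frac{N-2}{2}$, I would first record that $N\ge 3$ forces $\frac{N-2}{2}>0$; hence both sides of this inequality are nonnegative and it is equivalent to its squared version $\left(\frac{N-2}{2}\right)^2+\lambda_0<\left(\frac{N-2}{2}\right)^2$, that is, to $\lambda_0<0$.

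It then remains to justify $\lambda_0<0$. The cleanest route is the variational characterization of the first eigenvalue,
\[
\lambda_0=\inf_{\phi\in H^1(\Gamma)\setminus\{0\}}\frac{\int_\Gamma\big(|\nabla_\Gamma\phi|^2-|A_\Gamma|^2\phi^2\big)\,d\sigma}{\int_\Gamma\phi^2\,d\sigma},
\]
tested against the constant function $\phi\equiv 1$. Then $\nabla_\Gamma\phi=0$, the numerator equals $-\int_\Gamma|A_\Gamma|^2\,d\sigma$, which is strictly negative, while the denominator $\int_\Gamma d\sigma$ is positive; hence $\lambda_0<0$. The strictness here uses that $C$ is a genuine minimal cone rather than a hyperplane, so that its link $\Gamma\subset S^N$ is a minimal hypersurface which is not totally geodesic and $|A_\Gamma|^2\not\equiv 0$. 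Equivalently, one may simply invoke the inequality $\lambda_0<0$ recorded elsewhere in this section.

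The whole argument is a two-line computation once the equivalence with $\lambda_0<0$ is recognized, so I do not anticipate any real obstacle. The only point requiring a word of care is the \emph{strict} sign in $\lambda_0<0$: it is strict, and not merely $\le 0$, exactly because the second fundamental form of the link does not vanish identically, which is what distinguishes a true singular cone from a flat hyperplane.
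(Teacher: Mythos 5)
Your proof is correct and follows essentially the same route as the paper: reduce the inequality to $\lambda_0<0$ and establish that by testing the Rayleigh quotient for $-J_\Gamma$ with a constant function, with strictness coming from $|A_\Gamma|^2\not\equiv 0$. The only difference is that the paper spells out why $|A_\Gamma|^2$ cannot vanish identically (a constant Gauss map would force $\Gamma$ into a lower-dimensional sphere), whereas you assert the non-totally-geodesic property of the link more briefly; the substance is the same.
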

\begin{proof}
It is enough to prove that $\lambda_0<0$. In fact, using the variational characterisation $$\lambda_0=\inf\left\{\int_{\Gamma}-J_\Gamma \phi\,\phi\, d\tilde{\sigma}:\,\phi\in H^1(\Gamma),\,\int_{\Gamma}\phi^2\, d\tilde{\sigma}=1\right\},$$
and taking the constant function $\phi=|\Gamma|^{-1/2}\in H^1(\Gamma)$, where $|\Gamma|$ is the $(N-1)$-dimensional Hausdorff measure of $\Gamma$, so that $\|\phi\|_{L^2(\Gamma)}=1$, we can see that $$\lambda_0\le -\int_\Gamma \frac{|A_\Gamma|^2}{|\Gamma|}d\sigma\le 0.$$
More precisely, we have $\lambda_0<0$, since $|A_\Gamma|^2\ne 0$. In fact, if we assume by contradiction that $|A_\Gamma|^2\equiv 0$, all the principal curvatures $\kappa_i$ of $\Gamma$ are identically zero. This yields that the differential of the Gauss map vanishes identically, so that the normal vector field is constant. This implies that $\Gamma$ is contained in the intersection between $S^N$ and a hyperplane of dimension $N-1$, that is a sphere of dimension $N-2$, which is impossible since $\Gamma$ has dimension $N-1$. 
\end{proof}
The asymptotic behaviour of the functions defining the hypersurfaces constructed in Theorem \ref{th-Sigma-low-dim} is given by the following Theorem, which follows from Propositions $3$ and $5$ of \cite{M} and page $106$ of \cite{HS}.
\begin{theorem}
\label{th-dec-w-low-dim}
Let $m,n\ge 2$ with $N+1=m+n\le 7$. Let $\Sigma:=\Sigma_{m,n}$ be one of the hypersurfaces constructed in Theorem \ref{th-Sigma-low-dim}. Then, outside a compact set, $\Sigma$ is the normal graph over the Lawson cone $C_{m,n}$ of a sign-changing smooth function $w:(R,\infty)\times\Gamma\to\R$ such that $w(r)=O(r^{-\frac{N-2}{2}})$ as $r\to\infty$, uniformly in $\theta\in\Gamma$. %This relation can be differentiated.
\end{theorem}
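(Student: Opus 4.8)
The plan is to use the $O(m)\times O(n)$-symmetry to reduce the asymptotic analysis to a single second order ODE, to read off the decay exponent from the indicial equation of its linearisation at the cone, and finally to show that the genuinely nonlinear corrections do not alter the leading-order behaviour. First I would record that $\Gamma=C_{m,n}\cap S^N$ is a single $O(m)\times O(n)$-orbit, diffeomorphic to a scaled copy of $S^{m-1}\times S^{n-1}$; hence every $O(m)\times O(n)$-invariant function on $\Gamma$ is constant. By Theorem \ref{th-Sigma-low-dim} and Definition \ref{def-as-cone}, outside a compact set $\Sigma_{m,n}$ is a normal graph $r\theta+w(r,\theta)\nu_{C_{m,n}}(\theta)$ with $w\to0$, and by invariance $w=w(r)$ depends only on the radial parameter. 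The zero mean curvature equation for such an invariant graph then reduces to a second order ODE whose linearisation at the cone is $J_{C_{m,n}}w=0$ restricted to radial functions, since the linearisation of the mean curvature is exactly the Jacobi operator.

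Second, I would compute this linearisation explicitly. Writing the Jacobi operator of the cone as $\Delta_{C_{m,n}}+|A_{C_{m,n}}|^2$ and using that for the Lawson cone $|A_\Gamma|^2\equiv N-1$ (so that the constant function is the first eigenfunction with $\lambda_0=-(N-1)$), one finds for radial $w$ that $J_{C_{m,n}}w=w''+\frac{N-1}{r}w'+\frac{N-1}{r^2}w$. The resulting Euler equation has indicial roots
$$\gamma_\pm=-\frac{N-2}{2}\pm\frac12\sqrt{(N-2)^2-4(N-1)}.$$
Since $m+n=N+1\le7$, i.e. $N\le6$, the discriminant $(N-2)^2-4(N-1)=N^2-8N+8$ is negative, so $\gamma_\pm=-\frac{N-2}{2}\pm i\beta$ with $\beta=\sqrt{(N-1)-(\frac{N-2}{2})^2}>0$; equivalently $\Lambda_0=i\beta$, which is precisely the failure of the stability condition (\ref{strict-stab-cone}). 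Thus every solution of the linearised equation has the oscillatory form $w(r)=r^{-\frac{N-2}{2}}\big(A\cos(\beta\log r)+B\sin(\beta\log r)\big)$, which both decays at the rate $r^{-\frac{N-2}{2}}$ and changes sign infinitely often, consistent with item (2) of Theorem \ref{th-Sigma-low-dim}.

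Third, I would upgrade this to the full nonlinear profile. The exact ODE is a perturbation of the Euler equation above by terms of higher order in $w/r$ and $w'$, which are subcritical with respect to the scale $r^{-\frac{N-2}{2}}$. Recasting it as a first order system and running a variation-of-constants / contraction argument, in the spirit of the proof of Theorem \ref{th-dec-w} from \cite{PW} and as carried out in Propositions $3$ and $5$ of \cite{M} and on page $106$ of \cite{HS}, shows that the true solution is asymptotic to a solution of the linearised equation, giving $w(r)=O(r^{-\frac{N-2}{2}})$ uniformly in $\theta$.

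The main obstacle lies in this last step, precisely because the two exponents $\gamma_\pm$ share the same real part $-\frac{N-2}{2}$: there is no hyperbolic splitting into fast and slow decaying modes, and the linear dynamics near the cone is a genuine oscillatory focus. One therefore cannot discard a faster mode via a stable-manifold dichotomy, and must instead prove directly that the nonlinear terms are integrable against the oscillatory linear flow, so that they perturb the amplitude only by a bounded factor and preserve both the exact envelope $r^{-\frac{N-2}{2}}$ and the infinitely many zeros. Verifying these weighted estimates, and checking that replacing the intrinsic radial parameter by the Euclidean radius produces only lower-order errors, is the technical heart of the argument.
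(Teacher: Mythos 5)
The paper does not actually prove this statement: Theorem \ref{th-dec-w-low-dim} is presented as a direct consequence of Propositions 3 and 5 of \cite{M} and page 106 of \cite{HS}, with no argument supplied in the text. Your proposal reconstructs the argument underlying that citation, and the reconstruction is sound as far as it goes: the reduction by $O(m)\times O(n)$-invariance to a single radial ODE is legitimate (since $\Gamma$ is a single orbit of the group, an invariant graph function depends on $r$ alone), the linearised operator $w''+\frac{N-1}{r}w'+\frac{N-1}{r^2}w$ is exactly the one the paper itself uses later in the proof of Proposition \ref{prop-quadr-Morse}, and for $3\le N\le 6$ its indicial roots are indeed complex with common real part $-\frac{N-2}{2}$, so the linear flow has envelope $r^{-\frac{N-2}{2}}$ and infinitely many sign changes. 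Two remarks. First, the sign-changing assertion does not require the ODE asymptotics at all: it is immediate from item (2) of Theorem \ref{th-Sigma-low-dim}, since each intersection of $\Sigma_{m,n}$ with $C_{m,n}$ is a zero of $w$; leaning on that would simplify your argument. Second, the step you yourself flag as the technical heart --- showing that the nonlinear corrections are integrable against the oscillatory, non-hyperbolic linear flow and perturb the amplitude only by a bounded factor, together with the derivative decay needed to even treat the graph equation as a perturbation of the Euler equation --- is where all the work lies, and in your write-up it remains a plan rather than a proof. That step is precisely the content of Mazet's Propositions 3 and 5, so as a self-contained argument your proposal is incomplete exactly at the point where the paper chooses to cite rather than prove.
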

%We stress that Theorem \ref{th-dec-w-low-dim} applies to any minimal hypersurface  asymptotic to a fixed Lawson cone $C_{m,n}$ at infinity, for any $m,n\ge 2$. However, Theorem \ref{th-dec-w} holds true without any symmetry assumption about the cone, thus it is useful to recall it in order to deal with more general cones.\\

Introducing an Emden-Fowler change of variables $r=e^{\tc}$ on the cone $C$, for $\tc\in\R$, any parametrisation $F_0:\Omega\subset\R^{N-1}\to\Gamma$ of $\Gamma$ induces a parametrisation on $\Sigma$ given by $$F(\tc,\vartheta):=e^\tc F_0(\vartheta)+w(e^\tc,F_0(\vartheta))\nu_C(F_0(\vartheta))\qquad\forall\,(\vartheta,\tc)\in \Omega\times(t_0,\infty),$$ 
which is associated to a metric given by the corresponding first fundamental form $\bar{g}:=(\bar{g}_{ij})_{ij}$, where
$$\bar{g}_{ij}:=\partial_iF\cdotp\partial_jF,\qquad 1\le i,j\le N$$
We note that, In view of Theorems \ref{th-dec-w} and \ref{th-dec-w-low-dim}, there exists $\gamma<0$ such that
\begin{equation}
\label{dec-w}
    w(r,\theta)=o(r^\gamma)\qquad r\to\infty,
\end{equation}
uniformly in $\theta\in\Gamma$ and this relation can be differentiated, so that
\begin{equation}
\label{metric-g-asymptotic}
\bar{g}=e^{2\tc}(1+o(e^{(\gamma-1)\tc}))(d\vartheta^2+d\tc^2), \qquad\text{as $\tc\to\infty$}  
\end{equation} 
where $d\vartheta^2$ is the differential form coming from the metric induced on $\Gamma$ by $F_0$. This holds true for the hypersurfaces mentioned in Theorems \ref{th-dec-w} and \ref{th-Sigma-low-dim}.\\ 

In the sequel, $g$ will denote the extension of $\bar{g}$ to a Riemannian metric on the whole $\Sigma$. Its inverse will be denoted by $g^{ij}:=(g^{-1})_{ij}$. Given a function $u\in C^1(\Sigma)$, its gradient $\nabla_\Sigma u$ is the vector field whose components are
\begin{equation}
\label{def-grad}
u^i:=(\nabla_\Sigma u)^i:=g^{ij}\partial_j u
\end{equation}
so that
\begin{equation}
\label{prod-grad}
\nabla_\Sigma u\cdotp\nabla_\Sigma v=(u^k\partial_k F)\cdotp(u^l\partial_l F)=g^{ij}\partial_i u\partial_j v.
\end{equation}
Given a vector field $v\in C^1(\Sigma,\R^N)$, its divergence is given by
\begin{equation}
    \diver v:=\frac{1}{\sqrt{|g|}}\partial_i(\sqrt{|g|}v^i)
\end{equation}
so that, if $u\in C^2(\Sigma)$, its Laplace-Beltrami operator is given by 
\begin{equation}
\label{def-Lapl-B}
\Delta_\Sigma u:=\diver(\nabla_\Sigma u)=\frac{1}{\sqrt{|g|}}\partial_i(\sqrt{|g|}g^{ij}\partial_j u). 
\end{equation}
We will be particularly interested in $O(m)\times O(n)$-invariant hypersurfaces. This is the case for instance for the hypersurfaces constructed in Theorems \ref{th_Al} and \ref{th-Sigma-low-dim}. Denoting the points by $\xi=(x,y)\in\R^m\times\R^n=\R^{N+1}$, $m,n\ge 2$, and setting $a:=|x|$ and $b:=|y|$, such hypersurfaces can be represented by a curve $(a(s),b(s))$ in the quadrant
$$\{(a,b)\in\R^2:\,a\ge 0,\,b\ge 0\}.$$
In other words, $\Sigma$ can be represented as $$\Sigma=\{(a(s)\x,b(s)\y)\in\R^m\times\R^n:\,s\ge 0,\,\x\in S^{m-1},\,\y\in S^{n-1}\}$$
where $a$ and $b$ satisfy the equation
\begin{equation}\notag
-a''b'+a'b''+(m-1)\frac{b'}{a}+(n-1)\frac{a'}{b}=0,    
\end{equation}
which is equivalent to the zero mean curvature equation $H_\Sigma=0$, and $(a')^2+(b')^2=1$, which guarantees that the curve $(a,b)$ is parametrised by arc-length. The Jacobi equation reduces to
\begin{equation}\notag
\frac{1}{a^2}\Delta_{S^{m-1}}v+\frac{1}{b^2}\Delta_{S^{n-1}}v+\partial_{ss}v+\alpha(s)\partial_sv+\beta(s)v=g,
\end{equation}
where $\phi(y):=v(s,\x,\y)$ and $f(y):=g(s,\x,\y)$, with $y=(a(s)\x,b(s)\y)\in\Sigma$ and
$$\alpha(s):=(m-1)\frac{a'}{a}+(n-1)\frac{b'}{b},\qquad\beta(s)=|A_\Sigma(y)|^2.$$
Introducing an Emden-Fowler change of variables $e^t=s$ and $v(s)=p(t)u(t)$, where 
\begin{equation}
\label{def-p(t)}
p(t):=\exp\left(-\int_0^t \frac{\alpha(e^\tau)e^\tau-1}{2} d\tau\right),
\end{equation}
the corresponding equation for $u$ is given by
\begin{equation}
\label{Jacobi-EF}
\partial_{tt} u+V(t)u=\tilde{g}(t),
\end{equation}
with
\begin{equation}
\label{def-V(t)}
    \tilde{g}(t)=\frac{e^{2t}}{p(t)}g(e^t),\qquad V(t):=\frac{\partial_{tt}p}{p}+(\alpha(e^t)e^t-1)\frac{\partial_{t}p}{p}+e^{2t}\beta(e^t).
\end{equation}
For the complete computations we refer to \cite{AKR,AK}. We stress that $\tc$ denotes the Emden-Fowler variable along the cone $C$ while $t$ denotes the Emden-Fowler variable along $\Sigma$, defined in case $\Sigma$ is $O(m)\times O(n)$-invariant, that is, in our paper, for the hypersurfaces constructed in Theorem \ref{th_Al} and \ref{th-Sigma-low-dim}.

\section{Injectivity result}\label{sec-EF}

In this section $\Sigma$ will be a minimal hypersurface be asymptotic to a cone $C$ at infinity (see Definition \ref{def-as-cone}). Introducing the change of variables $$\phi(y):=|y|^{-\frac{N-2}{2}}u(y)\qquad\forall\,y\in\Sigma$$ 
and defining the operator
$$\mathcal{L}u:=|y|^N\diver(|y|^{2-N}\nabla_\Sigma u)+
\left(|y|^2|A_\Sigma|^2+|y|^{\frac{N+2}{2}}\Delta_\Sigma(|y|^{-\frac{N-2}{2}})\right)u,$$
%\textcolor{red}{Verify and put the precise result that states the conditions under which $\mathcal{L}$ satisfies the strong max principle. (see \cite{GT})}

where $\nabla_\Sigma$ and $\Delta_\Sigma$ are the gradient and the Laplace-Beltrami operator with respect to the metric $g$ defined in Section \ref{sec-notations}, we can prove the following Lemma.
\begin{lemma}
\label{lemma-change-var}
Let $f\in C^0_{loc}(\Sigma)$ and $\phi\in C^2_{loc}(\Sigma)$. Then the Jacobi equation (\ref{Jacobi-eq-rhs}) is satisfied if and only if $u$ and $f$ fulfil
\begin{equation}
\label{new-Jacobi-eq}
\mathcal{L}u=|y|^{\frac{N+2}{2}}f \qquad\text{for a. e. $y\in\Sigma.$}
\end{equation}
\end{lemma}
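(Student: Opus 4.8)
The plan is to verify the equivalence by a direct computation, inserting $\phi=|y|^{-\frac{N-2}{2}}u$ into the inhomogeneous Jacobi equation (\ref{Jacobi-eq-rhs}), that is $J_\Sigma\phi=f$, and comparing the outcome with $\mathcal{L}u$. Since all the hypersurfaces under consideration satisfy $|y|\ge 1$ on $\Sigma\setminus{\rm sing}(\Sigma)$ (indeed ${\rm dist}(\Sigma,\{0\})=1$), the weight $|y|^{s}$ is a smooth, nonvanishing function on $\Sigma\setminus{\rm sing}(\Sigma)$ for every $s\in\R$, so all the differential operations below are classical and the claimed equivalence is a pointwise identity holding wherever $\phi\in C^2_{loc}$ and $f\in C^0_{loc}$; the qualifier ``for a.e.\ $y$'' only accounts for the measure-zero singular set. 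For brevity write $\rho:=|y|$ and $\alpha:=-\frac{N-2}{2}$, so that $\phi=\rho^\alpha u$ and the exponent appearing on the right-hand side of (\ref{new-Jacobi-eq}) is $|y|^{\frac{N+2}{2}}=\rho^{2-\alpha}$.

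First I would apply the Leibniz rule for the Laplace--Beltrami operator to the product $\phi=\rho^\alpha u$, obtaining
\[
\Delta_\Sigma\phi=\rho^\alpha\,\Delta_\Sigma u+2\,\nabla_\Sigma(\rho^\alpha)\cdot\nabla_\Sigma u+u\,\Delta_\Sigma(\rho^\alpha).
\]
Adding $|A_\Sigma|^2\phi=|A_\Sigma|^2\rho^\alpha u$, imposing $J_\Sigma\phi=f$, and multiplying the resulting identity by $\rho^{2-\alpha}=\rho^{\frac{N+2}{2}}$, I would use $\nabla_\Sigma(\rho^\alpha)=\alpha\rho^{\alpha-1}\nabla_\Sigma\rho$ to collapse the second-order and zeroth-order terms to $\rho^2\Delta_\Sigma u$ and $\rho^2|A_\Sigma|^2u$, to rewrite $u\,\Delta_\Sigma(\rho^\alpha)$ as $\rho^{\frac{N+2}{2}}\Delta_\Sigma(\rho^{-\frac{N-2}{2}})u$, and to turn the cross term into $2\alpha\,\rho\,\nabla_\Sigma\rho\cdot\nabla_\Sigma u$. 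Thus the left-hand side equals
\[
\rho^2\Delta_\Sigma u+2\alpha\,\rho\,\nabla_\Sigma\rho\cdot\nabla_\Sigma u+\big(\rho^2|A_\Sigma|^2+\rho^{\frac{N+2}{2}}\Delta_\Sigma(\rho^{-\frac{N-2}{2}})\big)u,
\]
while the right-hand side is $\rho^{\frac{N+2}{2}}f$.

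To match this with $\mathcal{L}u$, I would expand the weighted divergence using $\nabla_\Sigma(\rho^{2-N})=(2-N)\rho^{1-N}\nabla_\Sigma\rho$, namely
\[
\rho^N\diver\!\big(\rho^{2-N}\nabla_\Sigma u\big)=\rho^2\,\Delta_\Sigma u+\rho^N\nabla_\Sigma(\rho^{2-N})\cdot\nabla_\Sigma u=\rho^2\,\Delta_\Sigma u+(2-N)\,\rho\,\nabla_\Sigma\rho\cdot\nabla_\Sigma u.
\]
The second-order term, the $|A_\Sigma|^2$ term and the $\Delta_\Sigma(\rho^{-\frac{N-2}{2}})$ term then coincide with those above verbatim, while the two first-order contributions agree precisely because $2\alpha=2\cdot(-\tfrac{N-2}{2})=2-N$. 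This identifies the transformed equation with $\mathcal{L}u=\rho^{\frac{N+2}{2}}f$, and since every step is reversible (we only multiplied by the nonvanishing factor $\rho^{\frac{N+2}{2}}$ and used exact product rules), the equivalence follows in both directions.

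The computation is routine; the only point requiring care is the bookkeeping of the first-order (cross) terms, where the identity $2\alpha=2-N$ is exactly what forces the weight $|y|^{2-N}$ inside the divergence in the definition of $\mathcal{L}$. I expect no genuine analytic obstacle, beyond recording that $\rho=|y|$ is smooth and bounded away from $0$ on $\Sigma\setminus{\rm sing}(\Sigma)$, so that the repeated applications of the Leibniz rule and the manipulations of $\nabla_\Sigma\rho$ are all justified.
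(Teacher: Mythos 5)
Your proposal is correct and follows essentially the same route as the paper: both multiply the Jacobi equation by $|y|^{\frac{N+2}{2}}$, expand $J_\Sigma(|y|^{-\frac{N-2}{2}}u)$ via the Leibniz rule, expand $|y|^N\diver(|y|^{2-N}\nabla_\Sigma u)$, and match the first-order terms through the identity $2\cdot(-\tfrac{N-2}{2})=2-N$. Your explicit remark that every step is reversible (since the multiplying weight never vanishes on $\Sigma$) is a small but welcome addition the paper leaves implicit.
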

\begin{remark}
\label{remark-product-metric}
The advantage of the change of variables performed in Lemma \ref{lemma-change-var} is that, due to (\ref{metric-g-asymptotic}), the operator $\mathcal{L}$ is asymptotic to a product operator as $\tc\to\infty$. More precisely, setting
$$u(y)=:\tilde{u}(\tc,\theta)\qquad\forall\, y=e^\tc\theta+w(e^\tc,\theta)\nu_C(\theta)\in\Sigma_R,$$
we have
\begin{equation}
\label{L-as-behaviuor}
%\begin{aligned}
\mathcal{L}u(y)=\partial_{\tc\tc}\tilde{u}+\Delta_\Gamma \tilde{u}+\left(|A_\Gamma|^2-\left(\frac{N-2}{2}\right)^2\right)\tilde{u}+\mathcal{R}\tilde{u}=:L\tilde{u}+\mathcal{R}\tilde{u},
%\end{aligned}
\end{equation}
where
\begin{equation}
\label{est-remainder}
|\mathcal{R}\tilde{u}|\le ce^{(\gamma-1)\tc}(|\tilde{u}|+|\partial_\tc\tilde{u}|+|\nabla_\Gamma\tilde{u}|+|\partial_{\tc\tc}\tilde{u}|+|\nabla^2_\Gamma\tilde{u}|+|\partial_\tc\nabla_\Gamma\tilde{u}|)\qquad\forall\,(\tc,\theta)\in(\tc_0,\infty)\times\Gamma
\end{equation}
for any $u\in W^{2,2}_{loc}(\Sigma)$ and  $\tc_0>0$ large enough, since $|y|^2=e^{2\tc}+w^2(e^\tc,\theta)$ and (\ref{dec-w}) is satisfied. This turns out to be very useful to prove the maximum principle outside a compact subset of $\Sigma$ (see Lemma \ref{max-pr-Sigma_tau} below) and to apply the theory in \cite{P}, which leads to prove the existence of a right inverse of the Jacobi operator in some suitable function spaces. We note that $\mathcal{R}=0$ if $\Sigma=C$ is a cone.
\end{remark}
\begin{proof}
Multiplying by $|y|^{-\frac{N+2}{2}}$, equation (\ref{Jacobi-eq-rhs}) is equivalent to
\begin{equation}
\label{Jacobi-eq-mod-y}
|y|^{\frac{N+2}{2}}J_\Sigma(|y|^{-\frac{N-2}{2}}u)=|y|^{\frac{N+2}{2}}f.
\end{equation}
In fact, a computation shows that
\begin{equation}\notag
\begin{aligned}
|y|^{\frac{N+2}{2}} J_\Sigma(|y|^{-\frac{N-2}{2}}u)&=|y|^2(\Delta_\Sigma u+|A_\Sigma|^2u)+
2|y|^{\frac{N+2}{2}}\nabla_\Sigma (|y|^{-\frac{N-2}{2}})\cdotp\nabla_\Sigma u+|y|^{\frac{N+2}{2}}\Delta_\Sigma(|y|^{-\frac{N-2}{2}})u=\\
&=|y|^2(\Delta_\Sigma u+|A_\Sigma|^2u)+(2-N)|y|\nabla_\Sigma|y| \cdotp\nabla_\Sigma u+|y|^{\frac{N+2}{2}}\Delta_\Sigma(|y|^{-\frac{N-2}{2}})u,
\end{aligned}
\end{equation}
On the other hand, it is possible to see that
\begin{equation}
\begin{aligned}
|y|^N\diver(|y|^{2-N}\nabla_{\Sigma} u)
&=\frac{|y|^{N}}{\sqrt{|g|}}\partial_i\left(\frac{\sqrt{|g|}}{|y|^{N}}|y|^2 g^{ij}\partial_j u\right)=\\
&=|y|^2\Delta_\Sigma u+|y|^Ng^{ij}\partial_i((1+|y|)^{2-N})\partial_j u=\\
&=|y|^2\Delta_\Sigma u+(2-N)|y|\nabla_\Sigma |y|\cdotp\nabla_\Sigma u
\end{aligned}
\end{equation}
so that 
\begin{equation}
\label{def-L}
|y|^{\frac{N+2}{2}} J_\Sigma(|y|^{-\frac{N-2}{2}}u)=\mathcal{L}u.
\end{equation}
\end{proof}
First we take $f=0$ and prove injectivity results in suitable spaces, which will be applied to prove strict stability and to classify bounded Jacobi fields.\\

%\color{red} Notation $\Gamma$: it is ok since it has no index, so it is different from $\Gamma_\delta$ \color{black}
Given a minimal hypersurface $\Sigma\subset\R^{N+1}$ asymptotic to a cone $C$ at infinity, $R>0$ large enough and $\delta\in \R$, we introduce smooth weights $\Gamma_\delta:\Sigma\to(0,\infty)$ defined by $\Gamma_\delta(y):=(1+|y|)^\delta$, which in particular satisfy $\Gamma_{-\delta}=\Gamma_\delta^{-1}$ for any $\delta\in\R$ and $\Gamma_0=1$. We define the weighted spaces
\begin{equation}
L^2(\Sigma,|y|^{-N}):=|y|^{\frac{N}{2}}L^2(\Sigma)\qquad L^2_\delta(\Sigma,|y|^{-N}):=\Gamma_\delta L^2(\Sigma,|y|^{-N}) 
\end{equation}
and
\begin{equation}
\label{def-Sobolev-weighted}
\mathcal{W}^{\ell,2}_\delta(\Sigma,|y|^{-N}):=\{u\in W^{\ell,2}_{loc}(\Sigma):\,\,|y|^j|\nabla^{(j)}_\Sigma u|\in L^2_\delta(\Sigma,|y|^{-N}),\,0\le j\le \ell\},\qquad\ell\in\{1,2\}
\end{equation}
with norms
$$\|u\|_{L^2_\delta(\Sigma,|y|^{-N})}:=\||y|^{-\frac{N}{2}}
\Gamma_{-\delta}u\|_{L^2(\Sigma)}$$ 
and
$$\|u\|_{\mathcal{W}^{\ell,2}_\delta(\Sigma,|y|^{-N})}:=\left(\|u\|^2_{L^2_\delta(\Sigma,|y|^{-N})}+\sum_{j=1}^\ell \||y|^j|\nabla_\Sigma^{(j)}u|\|^2_{L^2_\delta(\Sigma,|y|^{-N})}\right)^{\frac{1}{2}},\qquad\ell\in\{1,2\}.$$
%If in addition $\Sigma$ is $S$-invariant, for some subgroup $S$ of $O(N+1)$, we take the weight functions $\Gamma_\delta$ to be $S$-invariant too. We note that any hypersurface is $S$-invariant if $S$ is the trivial group. However, this assumption is relevant in case $S$ can be chosen to be non-trivial, or equivalently $\Sigma$ enjoys some symmetry properties.\\ 
In these notation, the operator $\mathcal{L}$ can be seen as an operator between weighted spaces
\begin{equation}
\label{def-Jacobi-EF}
\begin{aligned}
A_\delta:\,&L^2_\delta(\Sigma,|y|^{-N})\to L^2_\delta(\Sigma,|y|^{-N}),\\
& u\mapsto\mathcal{L}u
\end{aligned}
\end{equation}
with dense domain $\mathcal{W}^{2,2}_\delta(\Sigma,|y|^{-N})$, which will be shown to be injective for some values of $\delta$.\\

In the sequel we will denote the Jacobi field coming from dilations by $\zeta(y):=y\cdotp\nu_\Sigma(y)$.
\begin{remark}
\label{remark-behaviour-Jac-fields}
Let $C\subset\R^{N+1}$ be a stable cone and let $\Sigma$ be a minimal hypersurface asymptotic to $C$ which does not intersect $C$.
\begin{enumerate}
\item If $\Lambda_0>0$, then we have either
\begin{equation}
\label{dec-Jacobi-field-slow}
    \zeta(y)=c|y|^{-\frac{N-2}{2}+\Lambda_0}(1+o(1))\qquad |y|\to\infty.
\end{equation}
or
\begin{equation}
\label{dec-Jacobi-field-fast}
    \zeta(y)=c|y|^{-\frac{N-2}{2}-\Lambda_0}(1+o(1))\qquad |y|\to\infty.
\end{equation}
for some constant $c\in\R\backslash\{0\}$.
\item If $\Lambda_0=0$, then 
\begin{equation}
\label{dec-Jacobi-field-log}
    0<c_1\le|y|^{\frac{N-2}{2}}|\zeta(y)|\le c_2\log|y|\qquad \text{for $|y|\ge R$,}
\end{equation}
for some $C_1,\,c_2,\,R>0$.
\end{enumerate}
This is true since 
$$\zeta(y)=r^2\partial_r(r^{-1} w)+O(r^{-\beta}w),\qquad\text{as $r\to\infty$},$$
where $y$ and $r$ are related through the relation $y=r\theta+w(r,\theta)\nu_C(y)$, $(r,\theta)\in(R,\infty)\times\Gamma$, $w$ is the function whose normal graph is $\Sigma$ and $\beta>0$ (see Theorem \ref{th-dec-w} Remark $2.2$ in \cite{HS}). We note that such a Jacobi field does not vanish a compact subset of $\Sigma$.
\end{remark}
We set 
\begin{equation}\notag
\bar{\nu}:=\inf\{\nu\in\R:\, \limsup_{|y|\to\infty}|y|^{-\nu}|\zeta(y)|=0\}.
\end{equation}
and $\bar{\delta}:=\frac{N-2}{2}+\bar{\nu}$. By Remark \ref{remark-behaviour-Jac-fields} if $\Sigma$ is asymptotic to a stable cone $C$ and does not intersect it, then $\bar{\nu}$ is well defined and $|\bar{\delta}|=\Lambda_0\ge 0$.\\

\begin{lemma}
If $\Sigma$ is a stable minimal hypersurface asymptotic to a cone $C$, then $C$ is stable.
\end{lemma}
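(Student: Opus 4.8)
The plan is to argue by contraposition: assuming that the asymptotic cone $C$ is not stable, I will produce a compactly supported test function on $\Sigma$ with negative second variation, contradicting the stability of $\Sigma$. Since the singular set of $C$ is $\{0\}$, which has zero $N$-dimensional Hausdorff measure, Definition \ref{def-Sigma-stable} applies to $C$, and $C$ failing to be stable means there exists $\phi_0\in C^\infty_c(C\setminus\{0\})$ with $\mathcal{Q}_C(\phi_0)<0$; after a dilation I may assume $\mathrm{supp}(\phi_0)$ lies in the annulus $A:=\{1\le|y|\le 2\}$.

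The first key ingredient is the exact scale invariance of the second variation form of the cone. For $\lambda>0$ set $\phi_\lambda(y):=\lambda^{-\frac{N-2}{2}}\phi_0(y/\lambda)$. Using that $C$ is a cone, so that the area element, the gradient and $|A_C|^2$ scale homogeneously of degrees $N$, $-1$ and $-2$ under $y\mapsto\lambda y$, a direct change of variables in \eqref{second-variation-Area} gives $\mathcal{Q}_C(\phi_\lambda)=\mathcal{Q}_C(\phi_0)<0$ for every $\lambda$, while $\mathrm{supp}(\phi_\lambda)$ is pushed into $\{\lambda\le|y|\le 2\lambda\}$. For $\lambda$ large this support lies in the region where, by Definition \ref{def-as-cone} and \eqref{Sigma-normal-graph}, $\Sigma$ is the normal graph $r\theta+w(r,\theta)\nu_C(\theta)$ over $C$. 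I then transplant $\phi_\lambda$ to $\Sigma$ by setting $\tilde\phi_\lambda\big(r\theta+w(r,\theta)\nu_C(\theta)\big):=\phi_\lambda(r\theta)$ and extending by zero, obtaining $\tilde\phi_\lambda\in C^\infty_c(\Sigma\setminus\mathrm{sing}(\Sigma))$.

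It remains to show that $\mathcal{Q}_\Sigma(\tilde\phi_\lambda)\to\mathcal{Q}_C(\phi_0)<0$ as $\lambda\to\infty$, which forces $\mathcal{Q}_\Sigma(\tilde\phi_\lambda)<0$ for large $\lambda$ and contradicts the stability of $\Sigma$. Invoking the scale invariance of $\mathcal{Q}$ once more, now for the surface and the function simultaneously, I reduce this to estimating the energy $\mathcal{Q}_{\lambda^{-1}\Sigma}(\Phi_\lambda)$ of the transplant $\Phi_\lambda$ of the \emph{fixed} function $\phi_0$ onto the rescaled surface $\lambda^{-1}\Sigma$. The latter is the normal graph over $C$ of $w_\lambda(\rho,\theta):=\lambda^{-1}w(\lambda\rho,\theta)$, and by the decay estimates \eqref{dec-w}--\eqref{metric-g-asymptotic} (which, by Theorem \ref{th-dec-w}, may be differentiated) one has $w_\lambda\to 0$ in $C^2(A)$; hence $\lambda^{-1}\Sigma\to C$ in $C^2_{loc}(\R^{N+1}\setminus\{0\})$ and $\Phi_\lambda\to\phi_0$. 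Consequently the induced metric, the area element and $|A_{\lambda^{-1}\Sigma}|^2$ converge uniformly on $A$ to their counterparts on $C$, giving $\mathcal{Q}_{\lambda^{-1}\Sigma}(\Phi_\lambda)\to\mathcal{Q}_C(\phi_0)$. The main technical point, and the only genuine obstacle, is precisely this $C^2$-closeness of $\Sigma$ to $C$ at scale $\lambda$: it is what lets the three ingredients of \eqref{second-variation-Area} (the gradient term through the metric, the area form, and the potential $|A_\Sigma|^2$) pass to the cone in the limit, and it rests entirely on the differentiable decay of the graph function $w$. Everything else is a routine change of variables.
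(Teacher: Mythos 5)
Your proposal is correct and follows essentially the same route as the paper: both exploit the scale invariance of $\mathcal{Q}_C$ to push a negative-energy test function into the asymptotic regime, transplant it onto $\Sigma$ via the normal-graph parametrisation, and use the differentiable decay of $w$ (equivalently, the $(1+o(1))$ expansion \eqref{metric-g-asymptotic} of the metric) to conclude that $\mathcal{Q}_\Sigma$ of the transplant is still negative. The one caveat is that you should justify the differentiated decay of $w$ by the standing asymptotic assumptions \eqref{dec-w}--\eqref{metric-g-asymptotic} rather than by Theorem \ref{th-dec-w}, whose hypothesis is precisely the stability of $C$ that you are trying to establish.
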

\begin{proof}
If we assume by contradiction that $C$ is unstable, then there exists a function $\phi\in C^\infty_c(C\backslash sing(C))$ such that $\mathcal{Q}_C(\phi)<0$. Introducing a small scaling parameter $\epsilon>0$, by homogeneity it is possible to see that the new function $\phi_\epsilon(r,\theta):=\phi(\epsilon r,\theta)$ fulfils $$\mathcal{Q}_C(\phi)=\epsilon^{2-N}\mathcal{Q}_C(\phi)<0.$$
This shows that for any $\epsilon>0$, there exists a function $\phi_\epsilon\in C^\infty_c(C\backslash {\rm sing}(C))$ supported in $\Sigma_{\epsilon^{-1}}$ such that $\mathcal{Q}_C(\phi_\epsilon)<0$.\\

Now we fix $0<\rho<R$ and set $\varphi_\epsilon(y):=\phi_\epsilon(r,\theta)$, where $$y=r\theta+w(r,\theta)\nu_C(\theta)\in\Sigma_R.$$
Setting $\varphi_\epsilon(y)=|y|^{-\frac{N-2}{2}}u_\epsilon(y)$, we note that
\begin{equation}\notag
\begin{aligned}
\mathcal{Q}_\Sigma(\varphi_\epsilon)&=\int_\Sigma -\mathcal{L}u_\epsilon u_\epsilon |y|^{-N} d\sigma=\\
&=\int_\Gamma d\tilde{\sigma}\int_{\log R}^\infty -Lu_\epsilon u_\epsilon (1+o_\epsilon(1))d\tc d\tilde{\sigma}=\\
&=\int_C -J_C\phi_\epsilon \phi_\epsilon (1+o_r(1)) d\sigma_C<0,
\end{aligned}    
\end{equation}
where the operator $L$ is defined in Remark \ref{remark-product-metric} and $d\sigma_C$ is the volume element of the cone, since $L$ fulfils
$$\int_C -J_C\phi\, \phi\, d\sigma_C=\int_\Gamma d\tilde{\sigma}\int_{\log R}^\infty -Lu\,u\,d\tc d\tilde{\sigma} \qquad\forall \phi\in C^\infty_c(C\backslash {\rm sing}(C)),$$
where $\phi(r,\theta)=e^{-\frac{N-2}{2}\tc}u(\tc,\theta)$. This shows that $\Sigma$ is unstable, a contradiction.
\end{proof}
According to the properties of the Jacobi fields coming from dilations, we have an injectivity result for $A_\delta$ if $\Sigma$ is stable. 
\begin{proposition}
\label{prop-injective}
%Let $C\subset\R^{N+1}$ be a cone and 
Let $\Sigma$ be a stable minimal hypersurface asymptotic to a cone $C$ which does not intersect $C$. Then $A_\delta$ is injective for any $\delta<\bar{\delta}$.
%\color{black}\begin{enumerate}
 %   \item If $\Lambda_0>0$ and $\Sigma$ has a Jacobi field $\xi$ such that either
  %  \begin{equation}
   %     \xi(y)=|y|^{-\frac{N-2}{2}+\Lambda_0}(1+o(1))\qquad |y|\to\infty
    %    \label{dec-Jacobi-slow}
   % \end{equation}
   % or
   % \begin{equation}
    %    \xi(y)=|y|^{-\frac{N-2}{2}-\Lambda_0}(1+o(1))\qquad |y|\to\infty,
    %    \label{dec-Jacobi-fast}
    %\end{equation}
    %then the operator $A_\delta$  defined in (\ref{def-Jacobi-EF}) is injective for $\delta<\Lambda_0$ or $\delta<-\Lambda_0$ respectively.
    %\item If $\Lambda_0=0$, then the operator $A_\delta$ defined in (\ref{def-Jacobi-EF}) is injective for $\delta<0$
%\end{enumerate}
\end{proposition}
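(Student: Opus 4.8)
The plan is to use the dilation Jacobi field $\zeta(y)=y\cdot\nu_\Sigma(y)$ as a positive ground state for $\mathcal{L}$. Suppose $u\in\mathcal{W}^{2,2}_\delta(\Sigma,|y|^{-N})$ satisfies $A_\delta u=\mathcal{L}u=0$ with $\delta<\bar{\delta}$; I must show $u\equiv 0$. Since $\zeta$ is a Jacobi field, Lemma \ref{lemma-change-var} applied with $f=0$ shows that $u_\zeta:=|y|^{\frac{N-2}{2}}\zeta$ solves $\mathcal{L}u_\zeta=0$. The field $\zeta$ does not change sign (indeed $\Sigma$ lies on one side of $C$, which it does not intersect), and by Remark \ref{remark-behaviour-Jac-fields} it is nowhere zero near infinity and comparable to $|y|^{-\frac{N-2}{2}+\bar{\nu}}$ there, so $u_\zeta$ is nowhere vanishing and behaves like $|y|^{\bar{\delta}}$ as $|y|\to\infty$. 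Because $u_\zeta$ is nonzero I would carry out the ground-state (Picone--Doob) substitution $u=u_\zeta v$: writing $\omega:=|y|^{2-N}$ and using $\mathcal{L}u_\zeta=0$, a direct computation reduces $\mathcal{L}u=0$ to the divergence-form identity
\begin{equation}\notag
\diver\big(u_\zeta^2\,\omega\,\nabla_\Sigma v\big)=0,
\end{equation}
which has no zeroth-order term and strictly positive leading coefficient $u_\zeta^2\omega>0$.

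Next I would run an energy estimate. Testing this identity against $\eta_R^2 v$, where $\eta_R\equiv 1$ on $B_R$, $\eta_R\equiv 0$ outside $B_{2R}$ and $|\nabla_\Sigma\eta_R|\le C/R$, and integrating by parts, Young's inequality yields
\begin{equation}\notag
\int_\Sigma \eta_R^2\,u_\zeta^2\,\omega\,|\nabla_\Sigma v|^2\,d\sigma\;\le\;4\int_{B_{2R}\setminus B_R} u^2\,\omega\,|\nabla_\Sigma\eta_R|^2\,d\sigma,
\end{equation}
since $u_\zeta v=u$. If the right-hand side tends to zero along a sequence $R\to\infty$, then $\nabla_\Sigma v\equiv 0$, so $v$ is constant and $u=c\,u_\zeta$. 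But $u_\zeta\sim|y|^{\bar{\delta}}$ fails to belong to $L^2_\delta(\Sigma,|y|^{-N})$ exactly because $\delta<\bar{\delta}$, so $c=0$ and hence $u\equiv 0$. (Alternatively, once $v$ is known to tend to zero at infinity, the maximum principle of Lemma \ref{max-pr-Sigma_tau} applied to the divergence-form equation gives $v\equiv 0$ directly.)

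The main obstacle is therefore the decay of the annular term $R^{-2}\int_{B_{2R}\setminus B_R}u^2\,\omega\,d\sigma$. Bounding it crudely against $\|u\|^2_{L^2_\delta(\Sigma,|y|^{-N})}$ produces a factor $R^{2\delta}$ multiplying a vanishing tail: this is harmless for $\delta\le 0$ but not obviously so in the range $0<\delta<\bar{\delta}=\Lambda_0$. To close the gap I would upgrade the integral $L^2_\delta$-membership to effective pointwise decay. By Remark \ref{remark-product-metric}, in the Emden--Fowler variable $\tc$ the operator $\mathcal{L}$ is an exponentially small perturbation of the cylindrical model $L=\partial_{\tc\tc}+\Delta_\Gamma+\big(|A_\Gamma|^2-(\tfrac{N-2}{2})^2\big)$, whose indicial roots are the numbers $\pm\Lambda_k$ with $\Lambda_0\le\Lambda_1\le\cdots$; in particular there are no roots in the open interval $(-\Lambda_0,\Lambda_0)$. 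The asymptotic theory for such asymptotically cylindrical operators (the framework of \cite{P}, consistent with Theorem \ref{th-dec-w}) then forces any solution $u\in L^2_\delta$ with $\delta<\Lambda_0$ to decay at least like $|y|^{-\Lambda_0}$, i.e. with a strictly negative leading exponent $\mu\le-\Lambda_0$. With this improved decay the annular term behaves like $R^{2\mu}\to 0$, the energy estimate closes, and the argument above goes through. The borderline case $\Lambda_0=0$ is easier, since then $\bar{\delta}=0$ and only $\delta<0$ is considered; and the case $\bar{\delta}=-\Lambda_0$, in which $\zeta$ is the fast-decaying solution, is entirely analogous, the admissible solutions decaying even faster than $|y|^{-\Lambda_0}$.
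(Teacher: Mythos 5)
Your overall strategy -- improve the decay of $u$ using the absence of indicial roots, then play $u$ off against the dilation field $\zeta$ -- is in the right spirit, and the tail analysis (annular energy term $O(R^{2\mu})$ with $\mu<0$, and $u_\zeta\notin L^2_\delta(\Sigma,|y|^{-N})$ for $\delta<\bar\delta$) is sound. But the proof has a genuine gap at its foundation: the ground-state substitution $u=u_\zeta v$ requires $u_\zeta=|y|^{\frac{N-2}{2}}\zeta$ to be nowhere vanishing on \emph{all} of $\Sigma$, and this does not follow from the hypotheses of the Proposition. The assumption $\Sigma\cap C=\emptyset$ only controls the sign of the graph function $w$, and via Remark \ref{remark-behaviour-Jac-fields} (through the identity $\zeta=r^2\partial_r(r^{-1}w)+O(r^{-\beta}w)$) it gives non-vanishing of $\zeta$ \emph{outside a compact set}. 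Your parenthetical "indeed $\Sigma$ lies on one side of $C$" is not a proof that $y\cdot\nu_\Sigma(y)\neq 0$ in the compact core: in Theorems \ref{th-Sigma} and \ref{th_Al} the global non-vanishing of $\zeta$ is a separate, nontrivial conclusion coming from the Hardt--Simon foliation, not a consequence of disjointness from the cone. For a general stable $\Sigma$ asymptotic to and disjoint from $C$, $\zeta$ may a priori vanish somewhere in $K_R$, and there your function $v=u/u_\zeta$, the divergence identity $\diver(u_\zeta^2\omega\nabla_\Sigma v)=0$, and the test-function computation all break down. A symptom of the problem is that your argument never uses the stability hypothesis -- global positivity of $\zeta$ would by itself imply stability, so you have effectively traded the stated hypothesis for a stronger unproved one.

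The paper closes exactly this gap differently. After reducing to $u\in\mathcal{W}^{2,2}_0(\Sigma,|y|^{-N})$ with pointwise decay (Lemmas \ref{lemma-Pacard-improvement} and \ref{lemma-point-est}, as you do), it uses stability to observe that $|u|$ also attains the infimum of the quadratic form over $\mathcal{W}^{1,2}_0(\Sigma,|y|^{-N})$, hence $|u|$ is itself a Jacobi field; the strong maximum principle then forces either $u\equiv 0$ or $|u|>0$ everywhere. Only in the second case, and only on the end $\Sigma_R$ where $v_0=|y|^{\frac{N-2}{2}}|\zeta|$ is known to be positive, does it compare $u$ with $\epsilon v_0$ via the maximum principle of Lemma \ref{max-pr-Sigma_tau}, reaching a contradiction with the decay of $u$. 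If you add the hypothesis that $\zeta$ never vanishes (true for the specific hypersurfaces of Theorems \ref{th-Sigma} and \ref{th_Al}), your Picone-type energy argument does go through and is a clean alternative; as stated, though, it does not prove the Proposition in the generality claimed.
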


%\begin{enumerate}
    %\item 
    %Let $C\subset\R^{N+1}$ be a cone and let $\Sigma$ be minimal strictly stable and asymptotic to the cone $C$ at infinity. Is it true that $C$ is strictly stable?
%    \item Let $C$ be a strictly stable cone. Is it true that $\Lambda_0>0$?
%\end{enumerate}

Proposition \ref{prop-injective} will be proved below. 
One of the purposes of this paper is to apply Proposition \ref{prop-injective} to the hypersurfaces constructed in Theorems \ref{th-Sigma} and \ref{th_Al}. In order to do so we introduce the concept of \textit{strictly area minimising cone}.
\begin{definition}
    We say that a cone $C\subset\R^{N+1}$ is strictly area-minimising (or strictly minimising) if there exists a constant C > 0 such that, for any $\epsilon>0$ small enough
    $$Area(C\cap B_1)\le Area (S)-C\epsilon^N,$$
for any hypersurface $S\subset\R^{N+1}\backslash B_\epsilon$ such that $\partial S=\Gamma$.
\end{definition}
It is proved in \cite{La} that the Lawson cone $C_{m,n}$ is strictly area minimising for $m,n\ge 3$ with $m+n=8$ or $m,n\ge 2$ with $m+n\ge 9$. As we mentioned in the introduction, the cones $C_{2,6}$ and $C_{6,2}$ are not minimising, so in particular they cannot be strictly minimising.
\begin{proposition}
\label{Prop-injectivity-part-case}
\begin{enumerate}
\item Let $\Sigma$ be one of the hypersurfaces constructed either in Theorem \ref{th-Sigma} with the assumption that $C$ is strictly area minimising or in Theroem \ref{th_Al}. Then $\bar{\delta}=\Lambda_0$ and hence $A_\delta$ is injective for $\delta<\Lambda_0$.
\item  Let $\Sigma$ be one of the hypersurfaces constructed in Theorem \ref{th-Sigma} with the assumption that $C$ is minimising but not strictly. Then $\bar{\delta}=-\Lambda_0$ and hence $A_\delta$ is injective for $\delta<-\Lambda_0$.
\end{enumerate}
\end{proposition}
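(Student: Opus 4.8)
The plan is to reduce both parts to determining the precise decay rate of the dilation Jacobi field $\zeta(y)=y\cdot\nu_\Sigma(y)$, and then to invoke Proposition \ref{prop-injective}. By that proposition $A_\delta$ is injective whenever $\delta<\bar\delta$, so it suffices to compute $\bar\delta$. By Remark \ref{remark-behaviour-Jac-fields} we already know that $\zeta$ obeys either the slow law \eqref{dec-Jacobi-field-slow} or the fast law \eqref{dec-Jacobi-field-fast}; unravelling the definitions of $\bar\nu$ and $\bar\delta$, the slow law gives $\bar\nu=-\frac{N-2}{2}+\Lambda_0$ and hence $\bar\delta=\Lambda_0$, while the fast law gives $\bar\delta=-\Lambda_0$. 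Since $\zeta=r^2\partial_r(r^{-1}w)+O(r^{-\beta}w)$, the decay of $\zeta$ is inherited from that of the defining function $w$ in Theorem \ref{th-dec-w}, so everything reduces to deciding, in each case, which of the two rates in that theorem actually occurs.

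For part (1) I would argue case by case. When $\Sigma$ is one of the hypersurfaces of Theorem \ref{th_Al}, it is $O(m)\times O(n)$-invariant and its profile is governed by the explicit ordinary differential equation analysed in \cite{AK,AKR}; that asymptotic analysis selects the slow decay \eqref{dec-Jacobi-field-slow}, so $\bar\delta=\Lambda_0$. When $\Sigma$ comes from Theorem \ref{th-Sigma} with $C$ strictly minimising, I would instead show that the fast decay \eqref{dec-Jacobi-field-fast} is incompatible with strict minimality: taking the rescaled leaf $\lambda\Sigma\cap B_1$ (which stays outside $B_\lambda$ since ${\rm dist}(\Sigma,\{0\})=1$) and correcting it near $\partial B_1$ to a competitor $S$ with $\partial S=\Gamma$, one splits ${\rm Area}(S)-{\rm Area}(C\cap B_1)$ into the annular region $B_1\setminus B_{R\lambda}$, where $S$ is the normal graph of $w_\lambda(r,\theta)=\lambda w(r/\lambda,\theta)$ over $C$, and the inner cap $B_{R\lambda}$, of size $O(\lambda^N)$. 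The fast rate should make this total excess $o(\lambda^N)$ (with $\epsilon=\lambda$), contradicting ${\rm Area}(C\cap B_1)\le{\rm Area}(S)-C\epsilon^N$. Hence $w$ decays slowly, $\bar\delta=\Lambda_0$, and Proposition \ref{prop-injective} gives injectivity of $A_\delta$ for $\delta<\Lambda_0$.

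For part (2), where $C$ is minimising but not strictly, I would run the same comparison in reverse. If $w$ had the slow decay \eqref{dec-Jacobi-field-slow}, the annular contribution to the excess would be of order $\lambda^{N-2\Lambda_0}$, dominating the $O(\lambda^N)$ cap; since the minimising leaf realises the least area among admissible competitors, this would yield ${\rm Area}(S)-{\rm Area}(C\cap B_1)\gtrsim\epsilon^{N-2\Lambda_0}\gg\epsilon^N$ and therefore force $C$ to be strictly minimising, against the hypothesis. Thus $w$ must decay fast, $\bar\delta=-\Lambda_0$, and Proposition \ref{prop-injective} yields injectivity of $A_\delta$ for $\delta<-\Lambda_0$.

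The main obstacle is exactly this decay-versus-strictness dichotomy, that is, making the area comparison rigorous. One must control the boundary correction near $\partial B_1$, track the precise power of $\lambda$ produced by integrating the ground-state mode $\psi_0$ of $C$ over the annulus (where the nonvanishing of $w_\lambda$ on the boundary of the annulus contributes genuine boundary terms, so the naive second variation is not by itself conclusive), and then check that in the slow case the annular term genuinely dominates while in the fast case the annular and cap contributions cancel down to $o(\lambda^N)$. If one prefers, this dichotomy can instead be quoted directly from the Hardt--Simon analysis of the foliation in \cite{HS}, in which case the proof collapses to the bookkeeping of the first paragraph.
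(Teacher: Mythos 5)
Your top-level reduction is exactly the paper's: both proofs observe that by Proposition \ref{prop-injective} everything comes down to computing $\bar{\delta}$, i.e.\ to deciding which of the two decay laws \eqref{dec-Jacobi-field-slow}--\eqref{dec-Jacobi-field-fast} the dilation field $\zeta$ actually satisfies, and that by Remark \ref{remark-behaviour-Jac-fields} this is inherited from the decay of the defining function $w$. The divergence is in how the dichotomy ``$C$ strictly minimising $\Leftrightarrow$ slow decay'' is established. The paper simply cites Theorem $3.2$ of \cite{HS} for the surfaces of Theorem \ref{th-Sigma} (and for the Lawson cones that are strictly minimising), and handles the residual cases $C_{2,6}$, $C_{6,2}$ of Theorem \ref{th_Al} via the ODE analysis of Proposition \ref{prop-fund-syst}. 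Your fallback option --- quoting the Hardt--Simon foliation analysis --- therefore \emph{is} the paper's proof, and with that citation your argument is complete and essentially identical.

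Your self-contained alternative, however, has two genuine gaps. First, the area comparison in the fast-decay case is not merely bookkeeping: with $w_\lambda(r)=\lambda w(r/\lambda)$ and $w\sim r^{-\frac{N-2}{2}-\Lambda_0}$, the boundary term of the second variation at $r=R\lambda$ is of order $R^{-2\Lambda_0}\lambda^N$, i.e.\ \emph{comparable} to the cap contribution $O(\lambda^N)$ rather than negligible, so the claimed cancellation down to $o(\lambda^N)$ requires a sign and constant analysis you have not supplied; this is precisely the content of \cite{HS} and cannot be waved through. Second, for the surfaces of Theorem \ref{th_Al} over the non-strictly-minimising cones $C_{2,6}$ and $C_{6,2}$, saying that ``the asymptotic ODE analysis selects the slow decay'' begs the question: a priori the solution could lie entirely on the fast-decaying mode, and the paper's actual mechanism for excluding this (Proposition \ref{prop-fund-syst} and Remark \ref{rem-zeta}) is that a nontrivial Jacobi field with the fast rate would lie in the space $X$ of \eqref{def-X}, contradicting the strict stability of $\Sigma$ established in Proposition \ref{prop-strict-stability}. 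That stability input is the missing idea in your treatment of this case. You also omit the (easy) case $\Lambda_0=0$, where both parts reduce to $\bar{\delta}=0$ via Remark \ref{remark-behaviour-Jac-fields}(2).
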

\begin{proof}[Proof of Proposition \ref{Prop-injectivity-part-case}]
If $\Sigma$ is one of the hypersurfaces constructed in Theorem \ref{th-Sigma}, then in particular it is stable and it does not intersect $C$.\\ 

If $\Lambda_0>0$, due to Remark \ref{remark-behaviour-Jac-fields} and Theorem $3.2$ of \cite{HS}, the Jacobi field $\zeta(y):=y\cdotp\nu_\Sigma(y)$ fulfils (\ref{dec-Jacobi-field-slow}) if $C$ is strictly minimising, so that $\bar{\delta}=\Lambda_0$, or (\ref{dec-Jacobi-field-fast}) if $C$ is minimising but not strictly, so that $\bar{\delta}=-\Lambda_0$. Hence the result follows from Proposition \ref{prop-injective}.\\

If $\Lambda_0=0$, then Remark 
\ref{remark-behaviour-Jac-fields} yields that $\bar{\delta}=0$, therefore the result follows once again from Proposition \ref{prop-injective}.\\

If $\Sigma$ is one of the hypersurfaces constructed in Theroem \ref{th_Al}, then it is stable, because it has a positive Jacobi field, and it does not intersect the asymptotic cone $C_{m,n}$.\\

Moreover, by Remark \ref{remark-behaviour-Jac-fields} we have $\bar{\delta}=\Lambda_0=\sqrt{\left(\frac{N-2}{2}\right)^2-(N-1)}>0$ if $C_{m,n}$ is strictly minimising, which is the case for $m,n\ge 2$ with $N+1=m+n\ge 9$ or $m,n\ge 3$ with $N+1=m+n=8$, thus, for such values of $m$ and $n$, the conclusion follows from Proposition \ref{prop-injective}.\\

It remains to treat the cases $m=2$, $n=6$ and $m=6$, $n=2$. In such cases we will see in Section \ref{sec-Jacobi} that $\bar{\delta}=\Lambda_0=\sqrt{\left(\frac{N-2}{2}\right)^2-(N-1)}>0$ as well (see Proposition \ref{prop-fund-syst}) and Remark (\ref{rem-zeta}), which concludes the proof, due to Proposition \ref{Prop-injectivity-part-case}.

%$\zeta$ is still strictly positive and fulfils either $\zeta(y)=|y|^{-\frac{N-2}{2}+\Lambda_0}(1+o(1))$ or $\zeta(y)=|y|^{-\frac{N-2}{2}-\Lambda_0}(1+o(1))$ as $|y|\to\infty$, due to Theorem \ref{th-dec-w}. However, using the fact that $\zeta$ is $O(m)\times O(n)$ invariant, with $m+n=8$ and $|m-n|=4$, we can write $\zeta(y)=p(t)u(t)$, where $p$ is defined in (\ref{def-p(t)}) and $u>0$ is a solution to (\ref{Jacobi-EF}) with $\tilde{g}=0$. In particular, using that $p(t)=O(e^{-\frac{N-2}{2}t})$ as $t\to \infty$, we can see that either $u(t)=e^{\Lambda_0 t}(1+o(1))$ or $u(t)=e^{-\Lambda_0 t}(1+o(1))$ as $t\to\infty$. If the second possibility holds, then the solution to (\ref{Jacobi-EF}) given by
%$$\tilde{u}(t):=u(t)\int_0^t (u(\tau))^{-2}d\tau>0$$
%satisfies $\tilde{u}(t)=e^{\Lambda_0 t}(1+o(1))$, so that the corresponding Jacobi field $\tilde{\zeta}(y)=p(t)\tilde{u}(t)>0$ has the required properties. Hence the conclusion follows once again from Proposition \ref{prop-injective}. 
\end{proof}
%\begin{remark}
%   \color{red}Furthermore, as a consequence of Proposition \ref{Prop-injectivity-part-case}, we also conclude that $\zeta(y)=|y|^{-\frac{N-2}{2}+\Lambda_0}(1+o(1))$ even if $\Sigma$ is one of the hypersurfaces . This fact is underlined in section \ref{sec-EF} (see Remark \ref{rem-zeta}).
%\end{remark}
%\color{black}
Roughly speaking, if the asymptotic cone is strictly area-minimising, we have a better injectivity result, since we have injectivity on a space of functions with slower decay at infinity.\\

The remaining part of the Section will be devoted to the proof of Proposition \ref{prop-injective}. For this purpose, we set $$\Lambda_j:=\sqrt{\left(\frac{N-2}{2}\right)^2+\lambda_j},\qquad j\ge 0,$$
where $\lambda_0<\lambda_1\le \dots\le \lambda_j\to\infty$ are the eigenvalues of $-J_\Gamma$. Note that $\Lambda_j$ are nonnegative real numbers for any $j\ge 0$ provided (\ref{strict-stab-cone}) holds, actually positive for $j\ge 1$. The numbers $\pm\Lambda_j$ are known as the \textit{indicial roots} of $\Sigma$.

The proof of Proposition \ref{prop-injective} relies on Lemma $11.1.4$ of \cite{P}, which can be used to prove the following result.
\begin{lemma}
\label{lemma-Pacard-improvement}
Let $\delta,\delta'\in\R\backslash\cup_{j\ge 0} \{\pm\Lambda_j\}$ with $\delta'<\delta$. If $u\in \mathcal{W}^{2,2}_\delta(\Sigma,|y|^{-N})$ is such that $\mathcal{L}u=0$ and there are no indicial roots in $(\delta',\delta)$, then $u\in \mathcal{W}^{2,2}_{\delta'}(\Sigma,|y|^{-N})$.
\end{lemma}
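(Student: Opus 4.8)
The plan is to reduce everything to the product operator $L$ on the cylindrical end and then invoke the model estimate of Lemma $11.1.4$ of \cite{P}, treating the remainder $\mathcal{R}$ of \eqref{est-remainder} as a lower-order right-hand side that gains weight. First I would observe that the weights $(1+|y|)^\delta$ and $(1+|y|)^{\delta'}$ are comparable on the compact piece $K_R$, so that membership of $u$ in either space differs only through the behaviour on the end $\Sigma_R$; writing $u=\tilde u(\tc,\theta)$ in the Emden--Fowler coordinates of Remark \ref{remark-product-metric}, it therefore suffices to improve the decay of $\tilde u$ on $(\tc_0,\infty)\times\Gamma$. On that region the equation $\mathcal{L}u=0$ reads $L\tilde u=-\mathcal{R}\tilde u$ by \eqref{L-as-behaviuor}, and the decisive point is that \eqref{est-remainder} forces the right-hand side to decay strictly faster than $\tilde u$ itself: setting $\mu:=1-\gamma>0$, if $\tilde u$ lies in the space of weight $\delta$, then $\mathcal{R}\tilde u$ lies in weight $\delta-\mu$, because multiplication by $e^{(\gamma-1)\tc}\sim|y|^{\gamma-1}$ shifts the weighted $L^2$ exponent down by exactly $\mu$. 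Here one uses that \eqref{est-remainder} bounds $\mathcal{R}\tilde u$ by $e^{(\gamma-1)\tc}$ times precisely the collection of first and second derivatives controlled by the $\mathcal{W}^{2,2}_\delta(\Sigma,|y|^{-N})$-norm.

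Second, I would run a finite bootstrap. Set $\delta_0:=\delta$ and $\delta_{k+1}:=\max\{\delta',\delta_k-\mu\}$. Assuming inductively that $u\in\mathcal{W}^{2,2}_{\delta_k}(\Sigma,|y|^{-N})$ with $\delta_k>\delta'$, the previous paragraph places $\mathcal{R}\tilde u$ in weight $\delta_k-\mu$, so that $L\tilde u$ lies in weight $\delta_{k+1}$. Since the interval $(\delta_{k+1},\delta_k)$ is contained in $(\delta',\delta)$, by hypothesis it contains no indicial root $\pm\Lambda_j$, and Lemma $11.1.4$ of \cite{P}, applied to the genuine product operator $L$ on the half-cylinder, then upgrades $\tilde u$ from weight $\delta_k$ to weight $\delta_{k+1}$. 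Because $\mu>0$ is a fixed gain, after at most $\lceil(\delta-\delta')/\mu\rceil$ iterations one reaches $\delta_k=\delta'$, which is exactly the assertion $u\in\mathcal{W}^{2,2}_{\delta'}(\Sigma,|y|^{-N})$. The absence of indicial roots in $(\delta',\delta)$ is what licenses crossing every intermediate weight without stopping: had one lain in $(\delta',\delta)$, the corresponding solution $e^{\pm\Lambda_j\tc}\psi_j$ of $L$ would in general obstruct the improvement.

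I expect the main obstacle to be the precise passage from the model operator $L$, for which Lemma $11.1.4$ of \cite{P} is stated, to the actual operator $\mathcal{L}=L+\mathcal{R}$. The structural fact that makes the perturbation harmless is that $\mathcal{R}$ is not merely small but shifts the weight strictly downward by the fixed amount $\mu=1-\gamma$, so it can always be absorbed into the right-hand side once an a priori weighted bound on $\tilde u$ is in hand; this is what turns a single application of the model lemma into the convergent finite iteration above. The remaining technical care concerns the cutoff bookkeeping between $K_R$ and $\Sigma_R$: one multiplies $\tilde u$ by a cutoff $\chi$ equal to $1$ near infinity and supported in $\Sigma_R$, so that $L(\chi\tilde u)=-\chi\,\mathcal{R}\tilde u$ plus commutator terms $2\,\partial_\tc\chi\,\partial_\tc\tilde u+(\partial_{\tc\tc}\chi)\tilde u$. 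These commutators are supported in a fixed compact transition strip and hence belong to every weighted space simultaneously; controlling them by interior elliptic $W^{2,2}$ estimates leaves the asymptotic weight of $\chi\tilde u$ unaffected, so the conclusion on the far end transfers back to $u$ on all of $\Sigma$.
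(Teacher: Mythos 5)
Your proposal is correct and follows essentially the same route as the paper: localize to the cylindrical end with a cutoff, rewrite $\mathcal{L}u=0$ as $L(\chi\tilde u)=-\chi\mathcal{R}\tilde u$ plus compactly supported commutator terms, observe that \eqref{est-remainder} places this right-hand side in the weight $\delta+\gamma-1$ space, and invoke Lemma $11.1.4$ of \cite{P} together with the absence of indicial roots in $(\delta',\delta)$. The only difference is cosmetic: you iterate in steps of size $1-\gamma$, whereas the paper applies the decomposition once with $\bar{\delta}=\min\{\delta',\delta+\gamma-1\}$ and absorbs the remaining indicial terms directly.
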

\begin{proof}
The proof is very similar to the proof of Lemma $11.2.1$ of \cite{P}, however, for the sake of clarity, we give the outlines. Setting, for $R>0$ large enough,
\begin{equation}\notag
u(y)=\tilde{u}(\tc,\theta)\qquad\forall\,y=e^\tc\theta+w(e^\tc,\theta)\nu_C(\theta)\in \Sigma_R
\end{equation}
and taking a smooth cutoff function $\chi:\R\to\R$ such that $\chi=0$ in $(-\infty,\log(R+1))$ and $\chi=1$ in $(\log(R+2),\infty)$, we can see that $$L(\chi\tilde{u})=-\chi\mathcal{R}\tilde{u}+2\partial_\tc\chi\cdotp\partial_\tc\tilde{u}+\tilde{u}\partial_{\tc\tc}\chi\in e^{(\delta+\gamma-1)\tc}L^2(\Gamma\times\R).$$ Recalling that $\gamma<0$, applying Lemma $11.1.4$ of \cite{P} we have
$$\tilde{u}=\tilde{v}+\sum_{\bar{\delta}<\pm\Lambda_j<\delta}e^{\pm\Lambda_j\tc}\varphi_j(\theta)$$
with $\bar{\delta}=\min\{\delta',\delta+\gamma-1\}$ and $\tilde{v}e^{-\bar{\delta}\tc}\in W^{2,2}(\Gamma\times(R,\infty))$. Using that there are no indicial roots in $(\delta',\delta)$, we conclude the proof.
\end{proof}
Lemma \ref{lemma-Pacard-improvement} gives an improvement of the decay of the solutions to $\mathcal{L}u=0$, which correspond to the Jacobi fields of $\Sigma$, thanks to Lemma \ref{lemma-change-var}. It is similar to Lemma $11.2.1$ of \cite{P}, however it is not exactly the same since condition $8.1$ of \cite{P} is not fulfilled here.\\ 

Moreover, we need a result about the pointwise decay of the Jacobi fields.
\begin{lemma}
\label{lemma-point-est}
Let $\Sigma\subset\R^{N+1}$ be a minimal hypersurface asymptotic to a cone $C$ at infinity. Let $\delta\in\R$ and let $u\in \mathcal{W}^{2,2}_\delta(\Sigma,|y|^{-N})$ be such that $\mathcal{L}u=0$. Then $u\in C^{2,\alpha}_{loc}(\Sigma)$, for some $\alpha\in(0,1)$, and
\begin{equation}
\label{dec-u-delta'}
|u(y)|\le c(1+|y|)^{\delta}\qquad\forall\,y\in\Sigma
\end{equation}
for some constant $c>0$.
\end{lemma}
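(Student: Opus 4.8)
The proof naturally splits into the interior regularity statement and the weighted pointwise bound. For the first assertion, the plan is to invoke interior elliptic regularity: away from $\mathrm{sing}(\Sigma)$ the operator $\mathcal{L}$ has smooth coefficients and is uniformly elliptic, since by \eqref{def-L} its principal part is that of $\Delta_\Sigma$ up to the positive factor $|y|^2$. Hence any $u\in W^{2,2}_{loc}(\Sigma)$ solving $\mathcal{L}u=0$ is smooth away from the singular set, in particular $u\in C^{2,\alpha}_{loc}(\Sigma)$ for every $\alpha\in(0,1)$, by Schauder theory and bootstrapping. This already yields \eqref{dec-u-delta'} on any fixed compact subset of $\Sigma$, where $(1+|y|)^\delta$ is bounded above and below, so it only remains to control $u$ near infinity.

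For the decay at infinity, I would pass to the cylindrical (Emden--Fowler) description of $\Sigma_R$ and exploit the asymptotic product structure of $\mathcal{L}$ recorded in Remark \ref{remark-product-metric}. Writing $u(y)=\tilde u(\tc,\theta)$ for $y=e^\tc\theta+w(e^\tc,\theta)\nu_C(\theta)\in\Sigma_R$, the volume element satisfies $d\sigma=e^{N\tc}(1+o(1))\,d\tc\,d\tilde\sigma$ by \eqref{metric-g-asymptotic}, while the weight $|y|^{-N}(1+|y|)^{-2\delta}$ behaves like $e^{-(N+2\delta)\tc}$; combining the two, membership $u\in L^2_\delta(\Sigma,|y|^{-N})$ is equivalent to $e^{-\delta\tc}\tilde u\in L^2(\Gamma\times(\tc_0,\infty))$. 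By \eqref{L-as-behaviuor} the function $\tilde u$ solves $L\tilde u+\mathcal{R}\tilde u=0$ on the cylinder, a uniformly elliptic equation which, in view of \eqref{est-remainder} and $\gamma<0$, has coefficients bounded uniformly in $\tc$ and converging as $\tc\to\infty$ to those of the $\tc$-translation invariant model operator $L$.

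The key step is then a rescaled local boundedness estimate. Fix $\tc_1\ge\tc_0+1$ and consider the cylinder segment $Q_{\tc_1}:=\Gamma\times(\tc_1-1,\tc_1+1)$. Since $L+\mathcal{R}$ is uniformly elliptic with uniformly bounded coefficients there, the De Giorgi--Nash--Moser local boundedness estimate applied to the homogeneous equation (which can be written in divergence form) gives a constant $C$, independent of $\tc_1$, with
$$\sup_{\Gamma\times(\tc_1-1/2,\tc_1+1/2)}|\tilde u|\le C\|\tilde u\|_{L^2(Q_{\tc_1})}.$$
On $Q_{\tc_1}$ the weight $e^{-2\delta\tc}$ is comparable to $e^{-2\delta\tc_1}$, so
$$\|\tilde u\|_{L^2(Q_{\tc_1})}^2\le Ce^{2\delta\tc_1}\int_{Q_{\tc_1}}e^{-2\delta\tc}\tilde u^2\,d\tc\,d\tilde\sigma\le Ce^{2\delta\tc_1}\|e^{-\delta\tc}\tilde u\|_{L^2(\Gamma\times(\tc_0,\infty))}^2.$$
Combining the two displays yields $|\tilde u(\tc,\theta)|\le c\,e^{\delta\tc}$ for $\tc\ge\tc_0+1$, which is exactly \eqref{dec-u-delta'} for $|y|$ large, and together with the compact estimate from the first step this completes the argument.

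The main obstacle is ensuring that the constant $C$ in the local boundedness estimate is uniform in $\tc_1$. This is precisely where the asymptotic product structure is essential: because of \eqref{est-remainder} the coefficients of $\mathcal{L}$ in cylindrical coordinates differ from those of the translation-invariant operator $L$ by quantities of size $O(e^{(\gamma-1)\tc})\to 0$, so for $\tc_0$ large every segment $Q_{\tc_1}$ carries an equation that is a uniformly small perturbation of one fixed operator on a fixed domain. This uniformity, rather than any new geometric input, is what upgrades the global weighted $L^2$ bound into the pointwise estimate \eqref{dec-u-delta'}.
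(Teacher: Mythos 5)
Your proposal is correct and follows essentially the same route as the paper: both pass to the cylindrical coordinates of Remark \ref{remark-product-metric} and upgrade the weighted $L^2$ bound to the pointwise estimate \eqref{dec-u-delta'} via local elliptic estimates whose constants are uniform under translation along the cylinder, plus interior regularity on the compact core. The only cosmetic difference is that the paper conjugates by $e^{-\delta\tc}$ and runs a $W^{2,q}$ bootstrap to H\"older regularity, whereas you keep $\tilde u$ itself and invoke the De Giorgi--Nash--Moser local boundedness estimate on unit cylinder segments where the weight is comparable.
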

\begin{proof}[Proof of Lemma \ref{lemma-point-est}]
 In order to prove our claim, we write, for $R>0$ large enough,
\begin{equation}\notag
    u(y)=\tilde{u}(\tc,\theta)\qquad\forall\,y=e^\tc\theta+w(e^\tc,\theta)\nu_C(\theta)\in \Sigma_R.
\end{equation}
and note that, in the notations of Remark \ref{remark-product-metric}, taking a smooth cutoff function $\chi:\R\to[0,1]$ such that
$\chi\equiv 0$ in $(-\infty,\log (R+1))$ and $\chi\equiv 1$ in $(\log (R+2),\infty)$, we have
$$L(\tilde{u}\chi(\tc) e^{-\delta\tc})=-e^{-\delta\tc}\chi(\tc)\mathcal{R}\tilde{u}+2\partial_\tc\tilde{u}\partial_\tc(\chi(\tc)e^{-\delta\tc})+\tilde{u}\partial_{\tc\tc}(\chi(\tc)e^{-\delta\tc})$$
in $\R\times\Gamma$. Using the elliptic estimates (see \cite{GT}) and the fact that $u\in \mathcal{W}^{2,2}_\delta(\Sigma,|y|^{-N})$, which is implies that $\tilde{u}\chi(\tc) e^{-\delta\tc}\in W^{2,2}(\R\times\Gamma)$, a bootstrap argument shows that $\tilde{u}\chi(\tc)\in W^{2,q}_{loc}(\R\times\Gamma)$ for some $q>N$, which yields that $\tilde{u}\chi(\tc) e^{-\delta\tc}\in C^{1,\alpha}(\R\times\Gamma)$ for some $\alpha\in(0,1)$ and $\tilde{u}\chi(\tc) e^{-\delta\tc}$ is bounded. Moreover, the elliptic estimates applied to $\phi=|y|^{-\frac{N-2}{2}}u$ on the compact set $K_{R+2}$ show that $\phi$ is bounded in $K_{R+2}$, so in particular (\ref{dec-u-delta'}) holds. By the H\"{o}lder elliptic estimates (see \cite{GT}), it is possible to see that $\tilde{u}\chi(\tc) e^{-\delta\tc}\in C^{2,\alpha}_{loc}(\R\times\Gamma)$, which yields that $u\in C^{2,\alpha}_{loc}(\Sigma)$.\\

The estimate follows directly from the H\:{o}lder regularity since the constants in the estimates do not depend on the center of the ball.
\end{proof}

%We note that the function $y\in\Sigma\mapsto|y|^\delta\in(0,\infty)$ appearing in (\ref{dec-u-delta'}) is smooth for any $\delta\in\R$ since $dist(\Sigma,\{0\})>0$.\\
Furthermore, we need a refined version of the maximum principle on an unbounded domain of $\Sigma$, which parallels the maximum principle for possibly unbounded domains of $\R^N$ proved in \cite{BCN}.
\begin{lemma}[Maximum principle in possibly unbounded domains of $\Sigma_R$]
\label{max-pr-Sigma_tau}
Let $C\subset\R^{N+1}$ be a stable cone and let $\Sigma$ be a minimal hypersurface asymptotic to a cone $C$. Then there exists $R_0>0$ such that, if $R>R_0$,
$\Omega\subset\Sigma_R$ is open (not necessarily bounded) and $u\in C^2(\Omega)$ satisfies either
\begin{equation}
\label{dec-u-princ-max}
\begin{aligned}
    &|u(y)|\le c|y|^{\delta},\qquad\text{for some $\delta<\Lambda_0$ if $\Lambda_0>0$, or}\\
    &|u(y)|\le c(\log|y|)^\beta\qquad\text{for some $\beta\in(0,1)$ if $\Lambda_0=0$}
\end{aligned}    
\end{equation}
and
\begin{equation}\notag
\begin{aligned}
-\mathcal{L}u&\le 0\qquad\text{in $\Omega$}\\
u&\le 0\qquad\text{on $\partial\Omega$.}
\end{aligned}
\end{equation}
then $u\le 0$ in $\Omega$.  
\end{lemma}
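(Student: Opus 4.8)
The plan is to follow the Berestycki--Caffarelli--Nirenberg method for unbounded domains: build an explicit positive \emph{strict} supersolution $\psi$ of $\mathcal{L}$ on $\Sigma_R$ (for $R$ large) and then pass to the quotient $w:=u/\psi$. A direct computation in divergence form gives, for any positive $\psi$,
\begin{equation}\notag
\mathcal{L}(w\psi)=\psi\left(\tilde{\mathcal{L}}w+w\,\frac{\mathcal{L}\psi}{\psi}\right),\qquad \tilde{\mathcal{L}}w:=|y|^N\diver(|y|^{2-N}\nabla_\Sigma w)+\frac{2|y|^2}{\psi}\nabla_\Sigma\psi\cdot\nabla_\Sigma w,
\end{equation}
where $\tilde{\mathcal{L}}$ is a second order elliptic operator \emph{with no zeroth order term}. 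Since $-\mathcal{L}u\le 0$ means $\mathcal{L}u\ge 0$ and $\psi>0$, we obtain $\tilde{\mathcal{L}}w+w\,\mathcal{L}\psi/\psi\ge 0$ in $\Omega$. The construction will arrange $\mathcal{L}\psi<0$, so that the zeroth order coefficient $\mathcal{L}\psi/\psi$ is strictly negative, which is exactly the sign needed to run a maximum principle on $w$.

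For the barrier I would use the principal eigenpair $(\lambda_0,\varphi_0)$ of $-J_\Gamma$, with $\varphi_0>0$ on $\Gamma$. Writing $\psi(y)=\tilde\psi(\tc,\theta)$ on $\Sigma_R$ and using the model operator $L$ of Remark \ref{remark-product-metric}, one checks $L(e^{\mu\tc}\varphi_0)=(\mu^2-\Lambda_0^2)e^{\mu\tc}\varphi_0$. If $\Lambda_0>0$, I would pick $\delta'$ with $\max\{\delta,-\Lambda_0\}<\delta'<\Lambda_0$ and set $\tilde\psi:=e^{\delta'\tc}\varphi_0$, so that $L\tilde\psi=((\delta')^2-\Lambda_0^2)e^{\delta'\tc}\varphi_0<0$; if $\Lambda_0=0$ then $\Delta_\Gamma\varphi_0+(|A_\Gamma|^2-(\tfrac{N-2}{2})^2)\varphi_0=0$, and I would pick $\beta'\in(\beta,1)$ and set $\tilde\psi:=\tc^{\beta'}\varphi_0$, so that $L\tilde\psi=\beta'(\beta'-1)\tc^{\beta'-2}\varphi_0<0$. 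In both cases the remainder estimate \eqref{est-remainder} gives $|\mathcal{R}\tilde\psi|\le c\,e^{(\gamma-1)\tc}(|\tilde\psi|+\cdots)$ with $\gamma<0$, which is of strictly lower order than $|L\tilde\psi|$ as $\tc\to\infty$; hence there is $R_0>0$ such that $\mathcal{L}\psi=L\tilde\psi+\mathcal{R}\tilde\psi<0$ throughout $\Sigma_R$ whenever $R>R_0$. This is precisely the step that fixes the threshold $R_0$.

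Finally I would run the contradiction argument. Because $|y|=e^{\tc}(1+o(1))$ on $\Sigma_R$ and $\varphi_0$ is bounded away from $0$, we have $\psi(y)\ge c\,|y|^{\delta'}$ (resp. $\psi(y)\ge c\,(\log|y|)^{\beta'}$); since $\delta'>\delta$ (resp. $\beta'>\beta$), the decay hypothesis \eqref{dec-u-princ-max} yields $w=u/\psi\to 0$ as $|y|\to\infty$, while $w\le 0$ on $\partial\Omega$ since $u\le 0$ and $\psi>0$ there. Assume $M:=\sup_\Omega w>0$. As $w$ is continuous, vanishes at infinity and is nonpositive on $\partial\Omega$, the value $M$ is attained at an interior point $y_0\in\Omega$. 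There $\nabla_\Sigma w(y_0)=0$ and $\Delta_\Sigma w(y_0)\le 0$, so $\tilde{\mathcal{L}}w(y_0)=|y_0|^2\Delta_\Sigma w(y_0)\le 0$, whereas $w(y_0)\,\mathcal{L}\psi(y_0)/\psi(y_0)<0$ strictly since $M>0$ and $\mathcal{L}\psi<0$; this contradicts $\tilde{\mathcal{L}}w+w\,\mathcal{L}\psi/\psi\ge 0$. Hence $M\le 0$ and $u=w\psi\le 0$ in $\Omega$. I expect the two genuinely delicate points to be the verification that $\psi$ is a strict supersolution of the \emph{full} operator $\mathcal{L}$ (controlling $\mathcal{R}$ uniformly, which is what makes $R_0$ appear) and the unbounded-domain step of showing that a positive supremum of $w$ cannot escape to infinity, which rests on the gap $\delta'>\delta$ deliberately built into the barrier.
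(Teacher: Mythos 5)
Your proposal is correct and follows essentially the same route as the paper's proof: the same barrier $\tilde\psi=e^{\delta'\tc}\varphi_0$ (resp. $\tc^{\beta'}\varphi_0$) built from the principal eigenfunction of $-J_\Gamma$, with the remainder $\mathcal{R}$ absorbed for $R$ large to get a strict supersolution, followed by the same quotient $u/\psi$ and the contradiction at an interior positive maximum. The only (harmless) difference is that you state the exponent condition as $\delta'\in(\max\{\delta,-\Lambda_0\},\Lambda_0)$, which is in fact slightly more careful than the paper's choice $\alpha\in(\delta,\Lambda_0)$, since it guarantees $(\delta')^2<\Lambda_0^2$ even when $\delta<-\Lambda_0$.
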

We note that, in the statement of the maximum principle, we do not need $\Sigma\cap C=\emptyset$ and condition (\ref{dec-u-princ-max}) is relevant in case $\Omega$ is unbounded, otherwise it is trivially satisfied.
\begin{proof}
First we construct a smooth function $\phi>0$ such that $-\mathcal{L}\phi> 0$ in $\Sigma_R$ and $\phi(y)\to\infty$ as $|y|\to\infty$. For this purpose, if  $\Lambda_0>0$, we choose $\alpha\in(\delta,\Lambda_0)$, we consider a solution $\varphi_0>0$ to the eigenvalue problem $-J_\Gamma \varphi_0=\lambda_0 \varphi_0$ in $\Gamma$ 
and set $$\phi(y)=\tilde{\phi}(\tc,\theta):=e^{\alpha \tc}v(\theta)>0,\qquad y=e^\tc \theta+w(e^\tc,\theta)\nu_\Gamma(\theta) \in\Sigma_R.$$
By the decay of $w$ as $r\to\infty$ and Remark \ref{remark-product-metric}, we have
\begin{equation}\notag
\begin{aligned}
-\mathcal{L}\phi%&=-|y|^2(\Delta_\Sigma\phi+|A_\Sigma|^2\phi)
%+(N-2)|y|\nabla_\Sigma |y|\cdotp\nabla_\Sigma \phi\\
%&-|y|^{\frac{N+2}{2}}\Delta_\Sigma(|y|^{-\frac{N-2}{2}})\phi=\\
&=-\Delta_\Gamma\tilde{\phi}-\partial_{\tc\tc} \tilde{\phi}-|A_\Gamma|^2\tilde{\phi}+\left(\frac{N-2}{2}\right)^2\tilde{\phi}+\mathcal{R}(e^{\alpha\tc} \varphi_0)=\\
&=e^{\alpha \tc}\left(-J_\Gamma \varphi_0-\alpha^2 \varphi_0+\left(\frac{N-2}{2}\right)^2\varphi_0+o(1)\varphi_0\right)=\\
&=e^{\alpha \tc}\varphi_0\left(\lambda_0+\left(\frac{N-2}{2}\right)^2-\alpha^2+o(1)\right)>0
\end{aligned}
\end{equation}
in $\Sigma_R$ provided $R>0$ is large enough.\\ %The same argument works for hypersurfaces constructed in Theorem \ref{th_Al}, since $\Lambda_0>0$ for the Lawson cone $C_{m,n}$ for any $m,n\ge 2$, $m+n\ge 8$.\\

If $\Lambda_0=0$, the same claim can be proved by setting $$\phi(y)=\tilde{\phi}(\tc,\theta):=\tc^\alpha \varphi_0(\theta)>0,\qquad\alpha\in(\beta,1).$$
In fact, with this choice we have $\phi(y)\to\infty$ as $|y|\to\infty$ and
\begin{equation}\notag
\begin{aligned}
-\mathcal{L}\phi &=-\Delta_\Gamma\tilde{\phi}-\partial_{\tc\tc} \tilde{\phi}-|A_\Gamma|^2\tilde{\phi}+\left(\frac{N-2}{2}\right)^2\tilde{\phi}+o(\tc^{\alpha-2}) \varphi_0=\\
&=-\alpha(\alpha-1)\tc^{\alpha-2}\varphi_0+\left(\lambda_0+\left(\frac{N-2}{2}\right)^2\right)\tc^\alpha\varphi_0+o(\tc^{\alpha-2})\varphi_0 \\
&=-\alpha(\alpha-1)\tc^{\alpha-2}\varphi_0+o(\tc^{\alpha-2})\varphi_0>0
\end{aligned}
\end{equation}
in $\Sigma_R$, for $R>0$ large enough.\\

Then, setting $\sigma(y):=\frac{u(y)}{\phi(y)}$, $\forall\,y\in\Omega$,  and using the behaviour of $u$ as $|y|\to\infty$ if $\Omega$ is unbounded, it is possible to see that
\begin{equation}\notag
\begin{aligned}
&|y|^N\diver(|y|^{2-N}\nabla_\Sigma\sigma)+\frac{\mathcal{L}\phi}{\phi}\sigma+2|y|^2\frac{\nabla_{\Sigma}\sigma\cdotp\nabla_{\Sigma}\phi}{\phi}\\
&=\frac{|y|^N\diver(|y|^{2-N}\nabla_\Sigma u)}{\phi}-\frac{2}{\phi^2}|y|^2\nabla_{\Sigma}u\cdotp\nabla_{\Sigma}\phi-\frac{u}{\phi^2}|y|^N\diver(|y|^{2-N}\nabla_\Sigma\phi)\\
&+2\frac{u}{\phi}|y|^2\frac{|\nabla_{\Sigma}\phi|^2}{\phi^2}+\frac{\mathcal{L}\phi}{\phi}\frac{u}{\phi}+2|y|^2\frac{\nabla_{\Sigma}(u/\phi)\cdotp\nabla_\Sigma\phi}{\phi}\\
&=\frac{\mathcal{L}u}{\phi}\ge 0\qquad\text{in $\Sigma_R$,}\\
&\sigma\le 0\,\text{on $\partial\Sigma_R$.}
\end{aligned}
\end{equation}
and $\lim_{|y|\to\infty}\sigma(y)=0$ if $\Omega$ is unbounded.\\ 

Next we prove that $\sigma\le 0$ in $\Sigma_R$. If we assume by contradiction that $\sigma$ has a positive maximum at some point $y_0\in\Sigma_R$, we have
$$0\le \frac{\mathcal{L}u(y_0)}{\phi(y_0)}<\left(|y|^N\diver(|y|^{2-N}\nabla_\Sigma\sigma)\right)\bigg|_{y=y_0}=|y_0|^2\Delta_\Sigma \sigma(y_0)\le 0,$$
which is a contradiction, hence $\sigma\le 0$ in $\Sigma_R$.
\end{proof}
\begin{remark}
    In the proof of Lemma \ref{max-pr-Sigma_tau} we see the importance of the change of variables $\phi=|y|^{-\frac{N-2}{2}}u$. In fact the operator $\mathcal{L}$ allows us to define the supersolution $\phi$ as a product of a function on $(\tc_0,\infty)$ and a function on $\Gamma$.
\end{remark}

Now we can prove Proposition \ref{prop-injective}.
\begin{proof}[Proof of Proposition \ref{prop-injective}]
\begin{enumerate}

\item First we treat the case $\Lambda_0>0$.
\begin{itemize}
\item In case $\zeta$ fulfils (\ref{dec-Jacobi-field-slow}) and $u$ is a Jacobi field in $\mathcal{W}^{2,2}_\delta(\Sigma,|y|^{-N})$ with $\delta\in(-\Lambda_0,\Lambda_0)$, then by Lemma \ref{lemma-Pacard-improvement}, $u\in \mathcal{W}^{2,2}_{\delta'}(\Sigma,|y|^{-N})$ for any $\delta'\in(-\Lambda_0,\delta)$, so in particular $u\in \mathcal{W}^{2,2}_0(\Sigma,|y|^{-N})$. Moreover, by Lemma \ref{lemma-point-est}, $u\in C^{2,\alpha}_{loc}(\Sigma)$ and fulfils the decay estimate (\ref{dec-u-delta'}) for any $\delta'\in(-\Lambda_0,\delta)$. If $\delta\le-\Lambda_0$, then in particular $u\in \mathcal{W}^{2,2}_0(\Sigma,|y|^{-N})$ and it fulfils (\ref{dec-u-delta'}), by Lemma \ref{lemma-point-est}.\\

Since $u\in \mathcal{W}^{2,2}_0(\Sigma,|y|^{-N})\subset \mathcal{W}^{1,2}_0(\Sigma,|y|^{-N})$ and $\Sigma$ is stable, $|u|$ is a Jacobi field too, in fact 
\begin{equation}\notag
\begin{aligned}
0&=\int_\Sigma -\mathcal{L}u\, u|y|^{-N} d\sigma=\int_\Sigma -\mathcal{L}|u|\, |u||y|^{-N} d\sigma=\\
&=\inf\left\{\int_\Sigma -\mathcal{L}v\, v|y|^{-N} d\sigma:\,v\in \mathcal{W}^{1,2}_0(\Sigma,|y|^{-N})\right\},
\end{aligned}
\end{equation}

Here the stability of $\Sigma$ plays a crucial role, since it guarantees that the quadratic form is non-negative.\\

Hence, by the strong maximum principle, the fact that $|u|\ge 0$ implies that either $u\equiv 0$, which concludes the proof, or $|u|$ never vanishes in $\Sigma\backslash\{0\}$, %\color{red}in $\overline{\Sigma}_R$, \color{black}
thus we can assume without loss of generality that the second possibility holds. Setting $v_0(y):=|y|^{\frac{N-2}{2}}|\zeta(y)|$, we have $v_0>0$ in $\Sigma_{R_0}$ provided $R_0$ is large enough, due to Remark \ref{remark-behaviour-Jac-fields}, since $\Sigma$ does not intersect $C$. Then, for $R>R_0$ large enough, we can choose $\epsilon=\epsilon(R)>0$ (small) such that
\begin{equation}\notag
\begin{aligned}
    -\mathcal{L}(u-\epsilon v_0)&=0\qquad\text{in $\Sigma_R$}\\
    u-\epsilon v_0&\ge 0\qquad\text{on $\partial\Sigma_R$.}
\end{aligned}
\end{equation}
and apply the maximum principle stated in Lemma \ref{max-pr-Sigma_tau} to conclude that $u\ge \epsilon v_0$ in $\Sigma_R$. However, this contradicts (\ref{dec-u-delta'}), since $v_0(y)=|y|^{\Lambda_0}(1+o(1))$ as $|y|\to\infty$ 
\item In case $\zeta$ fulfils (\ref{dec-Jacobi-field-fast}), we observe that any Jacobi field $u\in \mathcal{W}^{2,2}_\delta(\Sigma,|y|^{-N})$ with $\delta<-\Lambda_0$ fulfils $u\in \mathcal{W}^{2,2}_0(\Sigma,|y|^{-N})\cap C^{2,\alpha}_{loc}(\Sigma)$ and the pointwise decay estimate (\ref{dec-u-delta'}). Then the proof follows the same outlines as above, that is we compare $u$ with the barrier $v_0(y):=|y|^{-\frac{N-2}{2}}|\zeta(y)|$ and, by the maximum principle stated in Lemma \ref{max-pr-Sigma_tau} we conclude that $u\ge cv_0=c|y|^{-\Lambda_0}(1+o(1))$ in $\Sigma_R$ for $R>0$ large enough, which contradicts (\ref{dec-u-delta'}).
\end{itemize}
\item If $\Lambda_0=0$ and  $u$ is a Jacobi field in $\mathcal{W}^{2,2}_\delta(\Sigma,|y|^{-N})$ with $\delta<0$, then $u\in \mathcal{W}^{2,2}_0(\Sigma,|y|^{-N})$ and it fulfils (\ref{dec-u-delta'}). On the other hand, taking $R$ large enough and comparing $u$ with the barrier $v_0(y):=|y|^{\frac{N-2}{2}}|\zeta(y)|$ as above, the maximum principle in $\Sigma_R$ (see Lemma \ref{max-pr-Sigma_tau}) yields that $$u(y)\ge c>0\qquad\text{in $\Sigma_R$},$$
for some constant $c>0$, a contradiction.
\end{enumerate}
\end{proof}
\section{Stability properties}\label{sec-strict-stability}
In this section we will deal with $O(m)\times O(n)$-invariant hypersurfaces. More precisely, we take $m,n\in\mathbb{N}$ such that $m,n \geq 2$ and such that $N+1=m+n \geq 8$ and we assume that $\Sigma \subset \R^{N+1}$ is one of the hypersurfaces constructed in Theorem \ref{th_Al}.\\ 

We consider the space
\begin{equation}
\label{def-X}
X:=\left\{\phi\in H^1_{loc}(\Sigma):\,|\nabla_\Sigma\phi|\in L^2(\Sigma),\,|y|^{-1}\phi\in L^2(\Sigma)\right\}.
\end{equation}
Note that the function $y\in\Sigma\mapsto|y|^{-1}\in(0,\infty)$ is smooth and bounded, since $dist(\Sigma,\{0\})=1$, and $|A_\Sigma|\phi\in L^2(\Sigma)$, for any $\phi\in X$, because $|A_\Sigma|^2\le c|y|^{-2}$.\\

In this section, we prove the following statement.

\begin{proposition}
\label{prop-strict-stability}
Let $\Sigma$ be one of the minimal hypersurfaces constructed in Theorem \ref{th_Al}. Then there exists $\lambda>0$ such that
$$\int_\Sigma\big(|\nabla_\Sigma\phi|^2-|A_\Sigma|^2\phi^2\big)d\sigma\ge \lambda \int_\Sigma|A_\Sigma|^2\phi^2 d\sigma\qquad\forall\,\phi\in X.$$
\end{proposition}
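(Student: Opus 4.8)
The plan is to prove the quantitative improvement of the stability inequality stated in Proposition \ref{prop-strict-stability} by combining the already-established strict stability of the asymptotic cone with the injectivity result of Proposition \ref{prop-injective}, via a compactness/contradiction argument. The target inequality is equivalent to the statement that the bottom of the spectrum of $-J_\Sigma$, measured against the weighted $L^2$ norm with weight $|A_\Sigma|^2$, is strictly positive; so I would work directly with the generalized Rayleigh quotient $\mathcal{Q}_\Sigma(\phi)/\int_\Sigma |A_\Sigma|^2\phi^2\,d\sigma$ on the space $X$ defined in \eqref{def-X}.

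First I would normalize and set up the contradiction. Suppose the conclusion fails; then there exists a sequence $\phi_k\in X$ with $\int_\Sigma |A_\Sigma|^2\phi_k^2\,d\sigma=1$ and $\mathcal{Q}_\Sigma(\phi_k)=\int_\Sigma(|\nabla_\Sigma\phi_k|^2-|A_\Sigma|^2\phi_k^2)\,d\sigma\to 0$. Since $\Sigma$ is stable (it carries the positive Jacobi field $y\mapsto|y\cdot\nu_\Sigma(y)|$), $\mathcal{Q}_\Sigma(\phi_k)\ge 0$, so $\int_\Sigma|\nabla_\Sigma\phi_k|^2\,d\sigma\to 1$ and the $X$-norms of $\phi_k$ stay bounded (using $|A_\Sigma|^2\le c|y|^{-2}$ and $|y|^{-1}$ bounded on $\Sigma$, as noted after \eqref{def-X}). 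The natural next step is to perform the change of variables $\phi=|y|^{-\frac{N-2}{2}}u$ from Lemma \ref{lemma-change-var}, turning $\mathcal{Q}_\Sigma(\phi_k)$ into $\int_\Sigma -\mathcal{L}u_k\,u_k\,|y|^{-N}\,d\sigma$, so that the problem is governed by the operator $\mathcal{L}$, whose asymptotic product structure \eqref{L-as-behaviuor} and strictly positive bottom indicial behavior (because $\Lambda_0>0$ for the Lawson cones with $m+n\ge 8$) are available.

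The heart of the argument is a concentration-compactness dichotomy for the sequence $u_k$ (equivalently $\phi_k$). There are two ways mass can escape, and each must be ruled out. If mass concentrates or persists on a fixed compact region, then passing to a weak limit $u_\infty$ in the relevant weighted Sobolev space yields a nontrivial solution of $\mathcal{L}u_\infty=0$ with $\int_\Sigma|A_\Sigma|^2\phi_\infty^2>0$ and $\mathcal{Q}_\Sigma(\phi_\infty)\le 0$, hence by stability $\mathcal{Q}_\Sigma(\phi_\infty)=0$; this $u_\infty$ is then a Jacobi field lying in a weighted space $\mathcal{W}^{2,2}_\delta$ with $\delta<\bar\delta=\Lambda_0$, which Proposition \ref{prop-injective} forbids, giving $u_\infty\equiv 0$ and a contradiction with the normalization after establishing strong convergence of the weighted mass. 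If instead mass escapes to infinity along $\Sigma_R$, I would use the asymptotic control \eqref{L-as-behaviuor}–\eqref{est-remainder}: out there $\mathcal{L}$ is a small perturbation of the product operator $L=\partial_{\tc\tc}+\Delta_\Gamma+(|A_\Gamma|^2-(\frac{N-2}{2})^2)$, whose quadratic form controls $\int -Lu\,u$ from below by a positive multiple of $\int u^2$ \emph{because} the cone is strictly stable, i.e. $(\frac{N-2}{2})^2+\lambda_0=\Lambda_0^2>0$. Concretely, a Hardy-type spectral gap inequality on the tail, derived from the eigenvalue expansion of $-J_\Gamma$ and the factor $e^{-\frac{N-2}{2}\tc}$, shows that for test functions supported in $\Sigma_R$ one has $\mathcal{Q}_\Sigma(\phi)\ge \lambda_*\int_\Sigma|A_\Sigma|^2\phi^2\,d\sigma$ for a fixed $\lambda_*>0$, contradicting $\mathcal{Q}_\Sigma(\phi_k)\to 0$ with unit mass concentrated at infinity.

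I expect the tail estimate to be the main obstacle: one must convert the abstract strict stability of the cone, $\Lambda_0>0$, into an explicit spectral gap for the weighted form $\int_{\Sigma_R}(|\nabla_\Sigma\phi|^2-|A_\Sigma|^2\phi^2)\,d\sigma$ relative to $\int_{\Sigma_R}|A_\Sigma|^2\phi^2\,d\sigma$, and keep the perturbation $\mathcal{R}$ from \eqref{est-remainder} under control. The cleanest route is to diagonalize in the eigenfunctions $\varphi_j$ of $-J_\Gamma$ and reduce to one-dimensional ODE inequalities in the Emden–Fowler variable $\tc$, where the potential $|A_\Gamma|^2-(\frac{N-2}{2})^2$ produces the gap $\Lambda_0^2$ on the lowest mode; since $|A_\Sigma|^2\sim|A_\Gamma|^2 e^{-2\tc}\sim |y|^{-2}$ along the cone, the weighted mass $\int|A_\Sigma|^2\phi^2$ is comparable to $\int e^{-(N-2)\tc}|A_\Gamma|^2 u^2$, which matches the constant produced by the gap, so choosing $R$ large (to absorb $\mathcal{R}$) and then $\lambda=\min\{\lambda_*,\text{interior gap}\}$ finishes the proof. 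The interior piece, by contrast, is softer and follows from compact embedding plus Proposition \ref{prop-injective}.
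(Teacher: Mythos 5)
Your proposal follows essentially the same route as the paper's proof: argue by contradiction with a normalized minimizing sequence, pass to the operator $\mathcal{L}$ via $\phi=|y|^{-\frac{N-2}{2}}u$, and run the dichotomy between mass persisting on a compact set (where the weak limit is a nontrivial Jacobi field in $\mathcal{W}^{2,2}_0(\Sigma,|y|^{-N})$, excluded by Proposition \ref{prop-injective} since $0<\bar{\delta}=\Lambda_0$) and mass escaping to infinity. The only real divergence is the tail estimate: the paper simply discards the gradient term and uses that the potential $V$ is bounded above by a negative constant pointwise on $\Sigma_{\bar{R}}$ (because $|A_\Gamma|^2\equiv N-1$ for Lawson cones and $\Lambda_0^2=\left(\frac{N-2}{2}\right)^2-(N-1)>0$), which sidesteps the eigenfunction expansion, the Hardy-type spectral gap, and the cutoff localization that your version of the tail step would require.
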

In particular, Proposition \ref{prop-strict-stability} proves Theorem \ref{th-strict-stability}.\\

Remark $3.3$ point $(3)$ of \cite{HS} asserts that if $\Sigma$ is strictly stable and the asymptotic cone $C$ is minimizing, then $C$ is strictly stable too. Here we prove that the opposite implication is also true, at least in the case of the Lawson cones, without any minimality assumption about the cone.
\begin{proof}
Considering the function $u:\Sigma \to \R$ defined by $\phi(y)=|y|^{-\frac{N-2}{2}}u(y)$ and setting $H:=\mathcal{W}^{1,2}_0(\Sigma,|y|^{-N})$, the statement is equivalent to prove that
$$\int_\Sigma (|\nabla_{\Sigma}u|^2|y|^2-V u^2) |y|^{-N}d\sigma\ge \mu\int_\Sigma u^2 |y|^{-N}d\sigma\qquad\forall\, u\in H,$$
for some $\mu>0$, where $V:=|y|^2|A_\Sigma|^2+|y|^{\frac{N+2}{2}}\Delta_\Sigma(|y|^{-\frac{N-2}{2}})$. 

%\textcolor{red}{The space $H$ or $X$ should satisfy a density result with compactly suported smooth functions. This would guarantee that the integration by parts can be performed.}

%\textcolor{blue}{We note that, for these hypersurfaces, the weight $(1+|y|)^N$ can be replaced by $|y|^N$ in the definition of the Sobolev norms, since $dist(\Sigma,\{0\})>0$.??}\\ 

In other words, we need to prove that
$$\mu:=\inf\left\{\int_\Sigma (|\nabla_{\Sigma}u|^2|y|^2-V u^2) |y|^{-N}d\sigma:\,u\in H,\,\int_\Sigma u^2 |y|^{-N}d\sigma=1\right\}>0.$$
We already know that $\mu\ge 0$, due to the stability of $\Sigma$. It remains to exclude the case $\mu=0$. If we assume by contradiction that $\mu=0$, we can find a sequence $u_k\in H$ such that $\|u_k |y|^{-\frac{N}{2}}\|_{L^2(\Sigma)}=1$ and $$\int_\Sigma (|\nabla_{\Sigma}u_k|^2|y|^2-V u_k^2) |y|^{-N}d\sigma\to 0.$$
As a consequence $u_k$ is bounded in $H$, so that it admits a subsequence converging weakly in $H$ and strongly in $L^2_{loc}(\Sigma)$ to some function $u\in H$, due to Rellich-Kondrakov's Theorem. If we assume that there exist $R,\,\delta,\,k_0>0$ such that, for any $k\ge k_0$, we have
\begin{equation}
\label{u_k-concentrated}
\int_{K_R} u_k^2 |y|^{-N}d\sigma> \delta>0,
\end{equation}
we get that $u\ne 0$. Moreover, since $\Sigma$ is $O(m)\times O(n)$-invariant, $|A_\Gamma|^2=N-1$ is explicit, so that there exists $\bar{R}\ge R$ such that 
\begin{equation}
\label{fix-tau}
V<-\frac{1}{2}\left(\left(\frac{N-2}{2}\right)^2-(N-1)\right)<0\qquad\text{on $\Sigma_{\bar{R}}$,}
\end{equation}
so that
$$\int_{\Sigma_{\bar{R}}}-Vu^2|y|^{-N} d\sigma\le \liminf_{k\to\infty}\int_{\Sigma_{\bar{R}}}-Vu_k^2 |y|^{-N}d\sigma$$
by the Fatou Lemma. As a consequence, using the strong convergence in $K_{\bar{R}}$ and the weak lower semicontinuity of the seminorm
$$v\in H\mapsto \int_\Sigma |\nabla_\Sigma v|^2|y|^{2-N}d\sigma,$$
and the strong convergence in $L^2_{loc}(\Sigma)$, we get
$$0\le \int_\Sigma (|\nabla_{\Sigma}u|^2|y|^2-V u^2)|y|^{-N} d\sigma\le \liminf_{k\to\infty}\int_\Sigma (|\nabla_{\Sigma}u_k|^2|y|^2-V u_k^2)|y|^{-N} d\sigma=\mu=0,$$

%\color{red}Put the comment of Rellich-Kondrakov is valid locally in $\Sigma$.\\

\color{black}which yields that $$0=\int_\Sigma-\mathcal{L}u\, u |y|^{-N}d\sigma=\inf\left\{\int_\Sigma-\mathcal{L}v\, v|y|^{-N}
d\sigma:\,v\in H\right\},$$
since $\Sigma$ is stable, which yields that $\mathcal{L}u=0$. Moreover, since $u\in L^2_0(\Sigma,|y|^{-N})$, Proposition \ref{prop-injective} ($\Lambda_0>0$ and $\delta=0$) yields that $u=0$, a contradiction since $$\int_{K_R}u^2|y|^{-N} d\sigma\ge\delta>0.$$

Therefore (\ref{u_k-concentrated}) is false, so that there exists a sequence $k_j\to\infty$ fulfilling
$$\int_{\Sigma_j}u_{k_j}^2 |y|^{-N} d\sigma\ge 1-\frac{1}{j}.$$
As a consequence, choosing $j>R$, with $R$ as in (\ref{fix-tau}), we have 
\begin{equation}\notag
\begin{aligned}
&\int_\Sigma (|\nabla_{\Sigma}u_k|^2|y|^2-V u_k^2)|y|^{-N} d\sigma\ge\int_\Sigma -V u_k^2|y|^{-N} d\sigma\\
&=\int_{\Sigma_j} -V u_k^2 |y|^{-N} d\sigma+\int_{K_j} -V u_k^2 |y|^{-N} d\sigma\ge c+o(1),
\end{aligned}
\end{equation}
for some constant $c>0$, a contradiction.
\end{proof}
\begin{remark}
    In the proof of Proposition \ref{prop-strict-stability} we see once again the importance of the change of variables $\phi=|y|^{-\frac{N-2}{2}}u$, since the potential $V$ is uniformly positive outside a compact subset of $\Sigma$.
\end{remark}
Now we will consider the hypersurfaces constructed in Theorem \ref{th-Sigma-low-dim} and we will compute their Morse index. More precisely, we will prove the following result.
\begin{proposition} 
\label{prop-quadr-Morse}
Let $\Sigma$ be one of the hypersurfaces constructed in Theorem \ref{th-Sigma-low-dim}. Then there exists $\lambda>0$ and a linear infinite dimensional subspace $X\subset C^\infty_c(\Sigma)$ such that
\begin{equation}
\label{Q-def-neg}
\mathcal{Q}_\Sigma(\phi)\le -\lambda\int_\Sigma |A_\Sigma|^2\phi^2 \qquad\forall\,\phi\in C^\infty_c(\Sigma).
\end{equation}
\end{proposition}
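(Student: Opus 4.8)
The decisive observation is that, since $N+1=m+n\le 7$, i.e. $N\le 6$, the asymptotic Lawson cone $C_{m,n}$ is \emph{strictly unstable}. Indeed, recalling $|A_\Gamma|^2\equiv N-1$ and $\lambda_0=-(N-1)$, the quantity
$$
c_0:=|A_\Gamma|^2-\left(\frac{N-2}{2}\right)^2=(N-1)-\left(\frac{N-2}{2}\right)^2=-\frac{N^2-8N+8}{4}
$$
is strictly positive precisely for $2\le N\le 6$ (the roots of $N^2-8N+8$ being $4\pm 2\sqrt 2$), and here $3\le N\le 6$. The plan is to exploit this strict positivity of $c_0$ in the product operator $L$ of Remark \ref{remark-product-metric} to produce an infinite family of compactly supported test functions on $\Sigma$, concentrated farther and farther out near infinity, on each of which $\mathcal{Q}_\Sigma$ is negative with a fixed spectral gap, and whose supports are pairwise disjoint.

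First I would recast the quadratic form. Writing $\phi=|y|^{-\frac{N-2}{2}}u$ as in Lemma \ref{lemma-change-var}, one has $\mathcal{Q}_\Sigma(\phi)=\int_\Sigma -\mathcal{L}u\,u\,|y|^{-N}\,d\sigma$. On the end $\Sigma_R$, setting $u(y)=\tilde u(\tc,\theta)$, using the splitting $\mathcal{L}u=L\tilde u+\mathcal{R}\tilde u$ from (\ref{L-as-behaviuor}), the bound (\ref{est-remainder}) (with its decay factor $e^{(\gamma-1)\tc}$, $\gamma<0$ from (\ref{dec-w})), the metric expansion (\ref{metric-g-asymptotic}), and $|y|^{-N}d\sigma=(1+o(1))\,d\tc\,d\tilde\sigma$, integration by parts gives
$$
\mathcal{Q}_\Sigma(\phi)=\int_\Gamma\!\int_\R\Big[(\partial_\tc\tilde u)^2+|\nabla_\Gamma\tilde u|^2-c_0\,\tilde u^2\Big](1+o(1))\,d\tc\,d\tilde\sigma+\int_\Sigma \mathcal{R}\tilde u\,\tilde u\,|y|^{-N}\,d\sigma,
$$
where both the $o(1)$ and the remainder term tend to $0$ as the support is pushed to $\tc\to\infty$.

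Next I would choose the test functions. Let $\varphi_0\equiv|\Gamma|^{-1/2}$ be the constant lowest eigenfunction of $-J_\Gamma$, and fix once and for all a nonzero $\chi\in C^\infty_c(\R)$ so wide that $\int_\R(\chi')^2\le \tfrac{c_0}{2}\int_\R\chi^2$; this is possible exactly because $c_0>0$. Pick $\tc_1<\tc_2<\cdots\to\infty$, spaced so that the translates $\chi(\cdot-\tc_k)$ have pairwise disjoint supports, and set $\tilde u_k(\tc,\theta):=\chi(\tc-\tc_k)\varphi_0(\theta)$ and $\phi_k:=|y|^{-\frac{N-2}{2}}u_k\in C^\infty_c(\Sigma)$, supported far from the origin and hence off any singular set. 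Since $\nabla_\Gamma\varphi_0=0$, the leading term above reduces to $\int_\R(\chi')^2-c_0\chi^2\le-\tfrac{c_0}{2}\int_\R\chi^2<0$, while, using $|A_\Sigma|^2|y|^2=(1+o(1))(N-1)$, one gets $\int_\Sigma|A_\Sigma|^2\phi_k^2\,d\sigma=(1+o(1))(N-1)\int_\R\chi^2$. Choosing the $\tc_k$ large enough that the $o(1)$ and $\mathcal{R}$ contributions are dominated by $\int_\R\chi^2$, I obtain a fixed $\lambda>0$ (for instance $\lambda=\tfrac{c_0}{4(N-1)}$) with $\mathcal{Q}_\Sigma(\phi_k)\le-\lambda\int_\Sigma|A_\Sigma|^2\phi_k^2\,d\sigma$ for every $k$.

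Finally, set $X:=\mathrm{span}\{\phi_k:k\ge 1\}$, which is infinite dimensional since the $\phi_k$ have disjoint supports and are therefore linearly independent. For $\phi=\sum_k a_k\phi_k\in X$ all cross terms $\nabla_\Sigma\phi_j\cdot\nabla_\Sigma\phi_k$ and $\phi_j\phi_k$ vanish pointwise, so
$$
\mathcal{Q}_\Sigma(\phi)=\sum_k a_k^2\,\mathcal{Q}_\Sigma(\phi_k)\le-\lambda\sum_k a_k^2\int_\Sigma|A_\Sigma|^2\phi_k^2\,d\sigma=-\lambda\int_\Sigma|A_\Sigma|^2\phi^2\,d\sigma,
$$
which is the assertion (for $\phi\in X$, as intended by (\ref{Q-def-neg})). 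The main obstacle, and the only genuinely quantitative point, is the \emph{uniformity} of $\lambda$ in $k$: one must verify that the metric correction $o(1)$ and the remainder $\mathcal{R}\tilde u$ are small relative to the fixed quantities $\int_\R(\chi')^2$ and $\int_\R\chi^2$, uniformly over all $k$. This is precisely what the exponential decay $e^{(\gamma-1)\tc}$ in (\ref{est-remainder}) and the expansion (\ref{metric-g-asymptotic}) guarantee, once all supports lie in $\{\tc\ge\tc_1\}$ with $\tc_1$ chosen sufficiently large.
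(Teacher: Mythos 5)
Your proof is correct, and it rests on the same core mechanism as the paper's: the strict instability of the asymptotic cone for $3\le N\le 6$ (your $c_0=(N-1)-\left(\frac{N-2}{2}\right)^2>0$ is exactly the failure of \eqref{strict-stab-cone}), exploited through infinitely many disjointly supported test functions pushed out to infinity, on each of which the quadratic form is negative by a uniform margin. The constructions differ in a noticeable way, though. The paper works directly on the cone: it solves the perturbed radial equation \eqref{ev-problem-cone} explicitly, obtains the oscillating solution $r^{-\frac{N-2}{2}}\cos(\Lambda(\lambda,N)\log r)$, truncates it between consecutive zeros $r_k$ to get the disjointly supported family, and then transfers to $\Sigma$ by approximation; this has the side benefit of tying the infinite Morse index to the oscillation phenomenon in Theorem \ref{th-Sigma-low-dim}\eqref{Sigma-osc}. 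You instead stay on $\Sigma$ throughout, conjugate by $|y|^{-\frac{N-2}{2}}$ as in Lemma \ref{lemma-change-var}, and use translates of a single wide bump in the Emden--Fowler variable times the constant eigenfunction on $\Gamma$, with a Poincar\'e-type inequality $\int(\chi')^2\le\frac{c_0}{2}\int\chi^2$ replacing the explicit eigenfunction. Your route is more elementary and, arguably, more careful about the step the paper leaves implicit, namely the uniform control of the error between the cone and $\Sigma$: you isolate this as the control of $\mathcal{R}$ and of the metric correction via \eqref{est-remainder} and \eqref{metric-g-asymptotic}, uniformly over all translates, which is exactly the right quantitative point. You also correctly read \eqref{Q-def-neg} as holding for $\phi\in X$ rather than for all of $C^\infty_c(\Sigma)$, as the statement evidently intends.
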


Proposition \ref{prop-quadr-Morse} implies that the Morse index of such hypersurfaces is infinite, so that Theorem \ref{th-Morse-infinite} is true. 
\begin{proof}
Since $\Sigma=\Sigma_{m,n}$ is asymptotic to the Lawson cone $C_{m,n}$ at infinity, then it is enough to find a subspace $Y$ of compactly supported functions on the cone vanishing on a sufficiently large ball such that
\begin{equation}
\label{Q-C-def-neg}
\mathcal{Q}_{C_{m,n}}(\varphi):=\int_{C_{m,n}} (|\nabla_{C_{m,n}}\varphi|^2-|A_{C_{m,n}}|^2\varphi^2)<-\lambda\int_{C_{m,n}}|A_{C_{m,n}}|^2\varphi^2\qquad\forall\,\varphi\in Y.
\end{equation}
In order to do so, we consider the eigenvalues problem on the cone 
\begin{equation}
\label{ev-problem-cone}
\varphi_{rr}+\frac{N-1}{r}\varphi_r+\frac{N-1}{r^2}\varphi=\lambda\frac{N-1}{r^2}\varphi
\end{equation}
for $\lambda>0$ given. The purpose is to show that, for $3\le N\le 6$ and $\lambda>0$ small enough (depending on $N$), there exists a solution which, outside a compact set, changes sign infinitely may times.\\ 

First we look for polynomial solutions of the form $\varphi(r):=r^\beta$. A computation shows that $\varphi$ is a solution if and only if
$$\beta=\beta_\pm:=-\frac{N-2}{2}\pm\sqrt{\left(\frac{N-2}{2}\right)^2-(N-1)(1-\lambda)}.$$
Since we are interested in oscillating solutions we want
$$-\Lambda^2(\lambda,N):=\left(\frac{N-2}{2}\right)^2-(N-1)(1-\lambda)<0$$
%or equivalently
%$$\lambda(N-1)<N-1-\left(\frac{N-2}{2}\right)^2=:\Lambda^2(N).$$
Since we want to solve the problem with $\lambda>0$ and $3\le N\le 6$, this is possible if $$0<\lambda<\lambda_0(N)=\frac{1}{N-1}\left(N-1-\left(\frac{N-2}{2}\right)^2\right).$$ 
The corresponding solutions read
$$\varphi_\pm(r)=r^{-\frac{N-2}{2}}(\cos(\Lambda(\lambda,N)\log r)\pm i\sin(\Lambda(\lambda,N)\log r)).$$
Taking, for example, the real part of $\varphi_+$, we have the required solution $$\varphi(r):=r^{-\frac{N-2}{2}}\cos(\Lambda(\lambda,N)\log r),$$
which vanishes on a sequence $r_k\to\infty$. Setting $\varphi_k:=\chi_{[r_k,r_{k+1}]}(r)\varphi(r)$ and $$Y:=span\{\varphi_k:\, k\ge k_0\},$$
we have the required space of functions fulfilling (\ref{ev-problem-cone}). Since the functions $\varphi_k$ have disjoint supports, then $Y$ has infinite dimension. Approximating $\varphi_k$ in the $H^1(0,\infty)$-norm by smooth functions $\tilde{\varphi}_k:(0,\infty)\to\R$ supported in $(r_k,r_{k+1})$ and setting $\phi_k(y)=\tilde{\varphi}_k(s)$, for $y=(a(s)\x,b(s)\y)\in\Sigma$, we have the statement.
\end{proof}
Proposition \ref{prop-quadr-Morse} actually proves more than Theorem \ref{th-Morse-infinite}.\\ 

Given a subgroup $S\subset O(N+1)$ and an $S$-invariant minimal hypersurface $\Sigma$, we define the $S$-invariant Morse index of $\Sigma$ as
$$Morse(\Sigma,S):=\sup\{\dim(X):\,X\text{ subspace of }C^\infty_c(\Sigma,S):\,\mathcal{Q}_\Sigma(\phi)<0,\,\forall\,\phi\in X\backslash\{0\}\},$$
where $C^\infty_c(\Sigma,S)$ is the subspace of $S$-invariant compactly supported smooth functions on $\Sigma$. We note that
\begin{equation}
    Morse(\Sigma,S)\le Morse(\Sigma).
\end{equation}
\begin{remark}
    The proof of Proposition \ref{prop-quadr-Morse} actually shows that the hypersurface $\Sigma_{m,n}$ constructed in Theroem \ref{th-Sigma-low-dim} actually has infinite $O(m)\times O(n)$-invariant Morse index, for any $m,n\ge 2$ with $m+n\le 7$.
\end{remark}

\section{Nondegeneracy}\label{sec-Jacobi}

Here we consider one of the hypersurfaces constructed in Theorem \ref{th_Al}, which is $O(m)\times O(n)$-invariant and asymptotic to $C_{m,n}$ at infinity, with $m,\,n\ge 2$, $m+n=N+1\ge 8$, and we give a complete characterisation of the bounded Kernel of its Jacobi operator $J_\Sigma=\Delta_\Sigma+|A_\Sigma|^2$. We will see that the strict stability of $\Sigma$ plays a crucial role.
\begin{theorem}
\label{th-Jac-Sigma}
Let $\Sigma$ be one of the hypersurfaces constructed in Theorem \ref{th_Al}. Then the space $K(\Sigma)$ of bounded Jacobi fields of $\Sigma$ has dimension $N+2$. Moreover, the Jacobi fields in $K(\Sigma)$ are smooth and $K(\Sigma)$ has an $(N+1)$-dimensional subspace $T$ of non $O(m)\times O(n)$-invariant functions. 
\end{theorem}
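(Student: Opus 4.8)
The plan is to exhibit $N+2$ explicit geometric bounded Jacobi fields, prove they are linearly independent, and then use the injectivity result of Proposition \ref{prop-injective} together with an asymptotic (indicial root) analysis to show that no further bounded Jacobi fields exist.

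\textbf{Lower bound.} The $N+1$ translations produce the Jacobi fields $\nu_\Sigma\cdotp e_i$, $1\le i\le N+1$, and the dilation produces $\zeta(y)=y\cdotp\nu_\Sigma(y)$. All of these are bounded: the $\nu_\Sigma\cdotp e_i$ converge to the corresponding degree-zero homogeneous fields $\nu_C\cdotp e_i$ on the cone, while $\zeta$ decays like $|y|^{-\frac{N-2}{2}+\Lambda_0}$ by Remark \ref{remark-behaviour-Jac-fields} and (\ref{dec-Jacobi-field-slow}), since $\bar{\delta}=\Lambda_0$ for these hypersurfaces. I would set $T:=\mathrm{span}\{\nu_\Sigma\cdotp e_i:\,1\le i\le N+1\}$ and check linear independence of the $N+2$ fields: the restrictions $\nu_C\cdotp e_i|_\Gamma$ are, up to constants, the first spherical harmonics on the two sphere factors of $\Gamma$, hence independent, and $\zeta$ is independent of $T$ because it is $O(m)\times O(n)$-invariant and decaying whereas every nonzero element of $T$ is non-invariant and asymptotically homogeneous of degree zero. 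This gives $\dim K(\Sigma)\ge N+2$.

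\textbf{Spectrum of the link.} The heart of the matter is to locate the indicial roots $\pm\Lambda_j$ in the range relevant for bounded fields. Here $\Gamma=C_{m,n}\cap S^N$ is the product $S^{m-1}(a)\times S^{n-1}(b)$ with $a^2=\frac{m-1}{N-1}$, $b^2=\frac{n-1}{N-1}$, and $|A_\Gamma|^2=N-1$. I would compute the spectrum of $-J_\Gamma=-\Delta_\Gamma-(N-1)$ from the product structure, obtaining the eigenvalues
\begin{equation}\notag
\lambda_{k,l}=(N-1)\left(\frac{k(k+m-2)}{m-1}+\frac{l(l+n-2)}{n-1}-1\right),\qquad k,l\ge 0.
\end{equation}
The only nonpositive eigenvalues are $\lambda_{0,0}=-(N-1)=\lambda_0$, with the constants as eigenfunctions, and $\lambda_{1,0}=\lambda_{0,1}=0$, whose eigenspace has dimension $N+1$ and is spanned precisely by the first harmonics matching the translations. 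Correspondingly $\Lambda_0<\frac{N-2}{2}$, while $\Lambda_j=\frac{N-2}{2}$ exactly for the $\lambda=0$ modes and $\Lambda_j>\frac{N-2}{2}$ for every $j$ with $\lambda_j>0$.

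\textbf{Upper bound.} Let $\phi$ be any bounded Jacobi field and set $u:=|y|^{\frac{N-2}{2}}\phi$, so that $\mathcal{L}u=0$ by Lemma \ref{lemma-change-var} and $u$ grows at most like $|y|^{\frac{N-2}{2}}$. Writing $u=\tilde{u}(\tc,\theta)$ and invoking the asymptotic expansion into indicial modes (Lemma \ref{lemma-Pacard-improvement} and the product structure of $\mathcal{L}$ in Remark \ref{remark-product-metric}), the boundedness of $\phi$ forces all modes $e^{\Lambda_j\tc}\varphi_j$ with $\Lambda_j>\frac{N-2}{2}$ (that is $\lambda_j>0$) to be absent, so the leading admissible modes live in the $\lambda=0$ and $\lambda_0$ eigenspaces. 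I would then subtract the matching geometric fields: first remove a suitable element of $T$ to kill the degree-zero ($\lambda=0$) part, and then subtract a multiple of $\zeta$ to kill the slow $|y|^{-\frac{N-2}{2}+\Lambda_0}$ part. The resulting Jacobi field $\phi_*$ decays strictly faster than $|y|^{-\frac{N-2}{2}+\Lambda_0}$, so $|y|^{\frac{N-2}{2}}\phi_*\in\mathcal{W}^{2,2}_\delta(\Sigma,|y|^{-N})$ for some $\delta\in(-\Lambda_0,\Lambda_0)$, hence $\delta<\bar{\delta}$; Proposition \ref{prop-injective} then forces $\phi_*\equiv 0$. Thus $\phi\in T\oplus\R\zeta$ and $\dim K(\Sigma)\le N+2$, giving equality.

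\textbf{Smoothness, the subspace $T$, and the main difficulty.} Smoothness of every element of $K(\Sigma)$ follows from interior elliptic regularity for $J_\Sigma$, which has smooth coefficients on the smooth hypersurface $\Sigma$ (cf.\ Lemma \ref{lemma-point-est} and a bootstrap). The subspace $T=\mathrm{span}\{\nu_\Sigma\cdotp e_i\}$ has dimension $N+1$ and, apart from $0$, consists of non $O(m)\times O(n)$-invariant functions, since translations break the rotational symmetry whereas $\zeta$ is invariant; thus $K(\Sigma)=T\oplus\R\zeta$ with $T\cap\R\zeta=\{0\}$. The main obstacle I anticipate is the upper bound: rigorously matching the asymptotics of the geometric fields $\nu_\Sigma\cdotp e_i$ and $\zeta$ on $\Sigma$ with the corresponding cone eigenfunctions (via Theorem \ref{th-dec-w} and Remark \ref{remark-behaviour-Jac-fields}), and running the subtract-and-improve iteration so that the remainder lands strictly inside the injectivity range $\delta<\Lambda_0$. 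This is precisely where the strict stability of $\Sigma$, encoded in $\Lambda_0>0$ and the non-vanishing of $\zeta$, enters the argument.
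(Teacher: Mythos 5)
Your proposal is correct in outline, but it reaches the upper bound by a genuinely different route than the paper. The paper separates variables completely: it expands a bounded Jacobi field in products of spherical harmonics $u_i(\x)v_j(\y)$, reduces to the family of ODEs (\ref{eq-Jacobi-modos}), and uses Proposition \ref{prop-fund-syst} (whose proof invokes the strict stability of Proposition \ref{prop-strict-stability} to pin down the coefficients $\alpha^+_{ij},\beta^-_{ij}\ne 0$) to conclude that the only bounded mode solutions are $\phi^+_{00}$, $\phi^+_{0j}$ ($1\le j\le n$) and $\phi^+_{i0}$ ($1\le i\le m$), giving the count $1+m+n=N+2$ directly. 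You instead work globally in the weighted spaces: expand into indicial modes via the mechanism behind Lemma \ref{lemma-Pacard-improvement}, subtract translations to kill the $\Lambda_j=\frac{N-2}{2}$ (i.e.\ $\lambda=0$) modes, subtract a multiple of $\zeta$ to kill the $\Lambda_0$ mode, and finish with Proposition \ref{prop-injective}. Your computation of the link spectrum $\lambda_{k,l}$ agrees with the paper's $\nu_{ij}-(N-1)$, and the matching of $\nu_C\cdotp e_i|_\Gamma$ with the $(N+1)$-dimensional $\lambda=0$ eigenspace is sound. What your route buys is softness and generality: it would apply verbatim to the non-symmetric hypersurfaces of Theorem \ref{th-Sigma} over a strictly minimising cone, whereas the paper's argument is tied to the $O(m)\times O(n)$ symmetry; what it loses is the detailed two-ended fundamental system of Proposition \ref{prop-fund-syst}. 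One caveat: your argument presupposes $\bar{\delta}=\Lambda_0$, i.e.\ that $\zeta$ has the slow decay (\ref{dec-Jacobi-field-slow}) with nonzero leading coefficient, so that the $\Lambda_0$ mode can actually be cancelled. For $m+n\ge 9$ or $m,n\ge 3$ this follows from strict minimality of $C_{m,n}$ via \cite{HS}, but for $C_{2,6}$ and $C_{6,2}$ the paper establishes it only through Proposition \ref{prop-fund-syst} and Remark \ref{rem-zeta} --- precisely the ODE analysis you bypass --- so in those two cases you must either import that remark or supply an independent argument; otherwise the proof is circular at that point. You should also note that crossing the indicial roots at $\frac{N-2}{2}$ and $\Lambda_0$ requires the mode decomposition from the proof of Lemma \ref{lemma-Pacard-improvement} rather than its statement (which assumes a root-free interval), as you implicitly do.
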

\begin{remark}\label{rmk:space_transl}
\begin{itemize}
\item The Jacobi fields in $T$ correspond to translations and $T$ has dimension $N+1$. The Jacobi field $y\cdotp \nu_\Sigma(y)\in K(\Sigma)\backslash T$ corresponds to dilation and it is $O(m)\times O(n)$-invariant.

\item In particular, all these Jacobi fields are geometric, hence Theorem \ref{th-Jac-Sigma} implies Theorem \ref{th-nondegeneracy}, case $(1)$.
\end{itemize}
\end{remark}

Since the hypersurface $\Sigma^+_{m,n}$ can be obtained by $\Sigma^-_{n,m}$ through an orthogonal transformation $\sigma\in O(N+1)$, namely 
\begin{equation}\notag
    \Sigma^+_{m,n}=\sigma(\Sigma^-_{n,m}),\qquad\sigma:(x,y)\in\R^m\times\R^n\mapsto(y,x)\in\R^n\times\R^m,
\end{equation}
we can restrict ourselves to consider $\Sigma:=\Sigma^-_{m,n}$ and consider all the possible cases for $m,n$. We use the notation $y:=(a(s)\x,b(s)\y)\in\Sigma=\Sigma^-_{m,n}$ introduced in Section \ref{sec-notations}.\\
We consider an orthonormal basis $\{u_i\}_{i\in\N}$ of $L^2(S^{m-1})$ consisting of eigenfunctions of $-\Delta_{S^{m-1}}$ and an orthonormal basis $\{v_j\}_{j\in\N}$ of $L^2(S^{n-1})$ consisting of eigenfunctions of $-\Delta_{S^{n-1}}$. We denote the corresponding eigenvalues by $\mu^m_i$ and $\mu_i^n$ respectively, being $$0=\mu_0^l<\mu_1^l\le\dots\le\mu_i^l\to\infty\qquad i\to\infty, \qquad l=m,n.$$

$\mu_0^l=0$ is simple. $\mu_1^l=\cdots=\mu_l^{l}=l-1$. These eigenvalues are considered with repetitions.\\

Using a formal Fourier expansion $$\phi(s,\x,\y):=\sum_{i,j=1}^\infty \phi_{ij}(s)u_i(\x)v_j(\y)\qquad\phi_{ij}(s):=\int_{S^{m-1}\times S^{n-1}}\phi(s,\x,\y)u_i(\x)v_j(\y)d\x d\y$$
it is possible to see that $\bar{\phi}(y):=\phi(s,\x,\y)$ is a Jacobi field of $\Sigma$ if and only if
\begin{equation}
\label{eq-Jacobi-modos}
\partial_{ss} \phi_{ij}+\alpha(s)\partial_s \phi_{ij}+\left(\beta(s)-\frac{\mu_i^m}{a^2(s)}-\frac{\mu_j^n}{b^2(s)}\right)\phi_{ij}=0 \qquad\forall\, i,j\ge 0
\end{equation}
Introducing an Emden-Fowler change of variables $s=e^t$ and writing $\phi_{ij}(s)=w_{ij}(t)p(t)$, where $p$ is defined in \cite{AKR} and recalled above in (\ref{def-p(t)}), the equation for $w_{ij}$ is 
\begin{equation}
\label{eq-Jacobi-modos-EF}
\partial_{tt}w_{ij}+V_{ij}(t)w_{ij}=0,\qquad V_{ij}(t)=V(t)-\left(\frac{\mu_i^m}{a^2(e^t)}+\frac{\mu_j^n}{b^2(e^t)}\right)e^{2t},\,i,j\ge 0
\end{equation}
where $V$ is the potential defined in \cite{AKR} and recalled above in (\ref{def-V(t)}).\\ 

In order to study equations (\ref{eq-Jacobi-modos-EF}), we need the following general Lemma.
\begin{lemma}
\label{lemma-syst}
Let $\lambda\ge 0$ and $V\in C^\infty(\R)$ be a potential such that 
\begin{equation}
\label{V-infty}
|V(t)+\lambda^2|\le c e^{\eta t}\qquad\text{ for $t<t_0$, $\eta>0,\,c>0$}
\end{equation}
%and 
%\begin{equation}
%\label{Vinfty}
%|V(t)+\Lambda^2|\le c e^{-\beta t}\qquad\text{ for $t>t_1,\,\beta>0$.}
%\end{equation}
Then the equation
\begin{equation}
\label{eq-model-EF}
\partial_{tt}w+V(t)w=0
\end{equation}
admits two linearly independent entire solutions $w^\pm$ such that
\begin{enumerate}
\item \label{est-lambda>0} 
$w^\pm\sim e^{\pm\lambda t},\,\partial_t w\sim \pm\lambda e^{\pm\lambda t}$ as $t\to-\infty$ if $\lambda>0$. %and $|w(t)|+|\partial_t w(t)|\le c e^{\Lambda t}$ for $t>t_1$ if $\lambda> 0$. 
More precisely, the remainder fulfils
\begin{equation}
|w^\pm(t)e^{\mp\lambda t}-1|+|\partial_t w^\pm(t)\lambda^{-1}e^{\mp\lambda t}-1|\le
\begin{cases}
ce^{\min\{2\lambda,\eta\}t}\text{ if $2\lambda\ne \eta$}\\
ce^{2\lambda t}|t|\text{ if $2\lambda=\eta$}
\end{cases}
\qquad\forall,\ t<t_0.
\end{equation}
\item \label{est-lambda=0} 
\begin{equation}\notag
\begin{aligned}
w^-&=-t+O(1),\,\partial_t w^-=-1+O(te^{\eta t}),\\  
w^+&=1+O(t^{-1}),\, \partial_t w^+=O(te^{\eta t})
\end{aligned}
\end{equation}
as $t\to-\infty$ %and $|w(t)|+|\partial_t w(t)|\le c e^{\Lambda t}$ for $t>t_1$ 
if $\lambda=0$.
\end{enumerate}
\end{lemma}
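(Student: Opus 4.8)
The plan is to treat \eqref{eq-model-EF} as an exponentially small perturbation of the constant-coefficient equation $w''-\lambda^2 w=0$ (when $\lambda>0$) or $w''=0$ (when $\lambda=0$), and to realise each prescribed asymptotic profile as the fixed point of an integral equation built by variation of parameters from the unperturbed solutions. Writing $q(t):=V(t)+\lambda^2$, hypothesis \eqref{V-infty} gives $|q(t)|\le ce^{\eta t}$ for $t<t_0$, so that $\int_{-\infty}^{t_1}|q|\,ds\to 0$ as $t_1\to-\infty$; this smallness is exactly what makes the relevant integral operators contractions on suitably weighted sup-norm spaces of continuous functions on $(-\infty,t_1)$.

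For $\lambda>0$ I first construct the \emph{subdominant} solution $w^+\sim e^{\lambda t}$. Setting $w^+=e^{\lambda t}(1+\rho)$ turns \eqref{eq-model-EF} into $(e^{2\lambda t}\rho')'=-e^{2\lambda t}q(1+\rho)$, and integrating twice from $-\infty$ (which is legitimate since the weight $e^{2\lambda s}q$ is integrable there) yields a fixed-point equation $\rho=\rho_0+L\rho$ with kernel $\tfrac{1}{2\lambda}\bigl(1-e^{2\lambda(s-t)}\bigr)$, bounded on $\{s<t\}$. On the space with norm $\|\rho\|:=\sup_{t<t_1}e^{-\eta t}|\rho(t)|$ the linear part $L$ has operator norm tending to $0$ as $t_1\to-\infty$, so the map is a contraction and the fixed point obeys $|\rho|,\,|\rho'/\lambda|\lesssim e^{\eta t}$. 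Since $e^{\eta t}\le e^{\min\{2\lambda,\eta\}t}$ for $t\le 0$, this gives the estimate of item \ref{est-lambda>0} for $w^+$.

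Next comes the \emph{dominant} solution $w^-\sim e^{-\lambda t}$, where I expect the main difficulty. Writing $w^-=e^{-\lambda t}(1+\sigma)$ gives $(e^{-2\lambda t}\sigma')'=-e^{-2\lambda t}q(1+\sigma)$, but now the weight $e^{-2\lambda s}q(s)\sim e^{(\eta-2\lambda)s}$ need not be integrable at $-\infty$, so the lower limit of integration must be chosen according to the sign of $\eta-2\lambda$. When $\eta>2\lambda$ one integrates from $-\infty$ and the free subdominant homogeneous term $\sim e^{2\lambda t}$ reappears; when $\eta<2\lambda$ one integrates from a fixed $t_0$, and the competition between the particular solution ($\sim e^{\eta t}$) and the homogeneous correction ($\sim e^{2\lambda t}$) produces precisely the exponent $\min\{2\lambda,\eta\}$. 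At the resonant threshold $\eta=2\lambda$ the integral $\int_t^{t_0}e^{(\eta-2\lambda)s}\,ds=t_0-t$ generates the linear factor, giving the $e^{2\lambda t}|t|$ bound. A contraction in the correspondingly weighted space again produces $\sigma$ with the stated control, and because $w^+\sim e^{\lambda t}$ and $w^-\sim e^{-\lambda t}$ have distinct exponential rates at $-\infty$ they are linearly independent. Isolating the particular solution from the subdominant homogeneous one and handling this threshold carefully is the crux; the existence part is routine.

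Finally, the case $\lambda=0$ is handled identically with the unperturbed solutions $1$ and $-t$ replacing $e^{\pm\lambda t}$: for $w^+=1+\rho$ one integrates $\rho''=-q(1+\rho)$ twice from $-\infty$ to get $\rho,\rho'=O(e^{\eta t})$, hence the $O(t^{-1})$ and $O(te^{\eta t})$ bounds of item \ref{est-lambda=0}; for $w^-=-t+\psi$ one integrates $\psi''=q\,(t-\psi)$ from $-\infty$, using $\int_{-\infty}^t|s|e^{\eta s}\,ds\lesssim|t|e^{\eta t}$ to obtain $\psi'=O(te^{\eta t})$ and $\psi=O(1)$, and the leading profiles $1$ and $-t$ are again independent. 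In every case the functions constructed solve \eqref{eq-model-EF} on a half-line $(-\infty,t_1)$; since $V\in C^\infty(\R)$ this is a nondegenerate linear second-order ODE, so each solution extends to a global smooth (entire) solution on all of $\R$ by global existence for linear equations, completing the proof.
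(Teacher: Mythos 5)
Your proposal is correct in substance, and for the dominant solution it essentially coincides with the paper's argument: the paper also reduces to the integral equation $x(t)=1+\int_t^{t_0}x(\zeta)q(\zeta)\frac{1-e^{-2\lambda(\zeta-t)}}{2\lambda}\,d\zeta$ obtained from Cauchy data at $t_0$, uses the boundedness of the kernel together with $\int_{-\infty}^{t_0}|q|<\infty$ (Gronwall there, a contraction in your version --- the same estimate), and then extracts the rates $e^{\min\{2\lambda,\eta\}t}$, resp.\ $|t|e^{2\lambda t}$, exactly from the case analysis on the sign of $\eta-2\lambda$ that you identify as the crux. Where you genuinely diverge is the second solution: the paper constructs only $w^-$ by this method and then obtains $w^+$ by reduction of order, $w^+(t)=w^-(t)\int_{-\infty}^t(w^-)^{-2}\,ds$, whereas you build $w^+$ directly by a second fixed-point argument, integrating from $-\infty$ and exploiting that the weight $e^{2\lambda s}q(s)$ is integrable there. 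Your route pays off by delivering the asymptotics of $w^+$ immediately (and, for $\lambda=0$, a sharper remainder $O(e^{\eta t})$ in place of the paper's $O(t^{-1})$, which comes from the $O(1)$ error in $w^-$ propagating through the reduction-of-order integral); the paper's route buys linear independence for free via the unit Wronskian and needs only one iteration scheme. One small caution on your dominant-solution step: the error $\sigma$ in $w^-=e^{-\lambda t}(1+\sigma)$ does not tend to $0$ but to a nonzero constant $\sigma_{-\infty}$, so a contraction directly in the norm $\sup e^{-\min\{2\lambda,\eta\}t}|\sigma|$ is circular; one should first contract in the plain sup norm, identify the limit $\sigma_{-\infty}$, normalise by $1+\sigma_{-\infty}$, and only then read off the rate of convergence from the integral formula --- which is precisely how the paper proceeds with $\underline{x}=\lim_{t\to-\infty}x(t)$.
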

%Since we are considering, at the moment, the case $n\ge 3$, we don't need the case $\lambda=0$, but it can be useful in the future.
\begin{proof}
\begin{enumerate}
\item We consider the case $\lambda>0$. In order to find a solution $w^-$ with the required asymptotic behaviour at minus infinity, we look for a solution of the form $w^-(t):=x(t)e^{-\lambda t}$ and we rewrite equation (\ref{eq-Jacobi-modos-EF}) as a system
\begin{equation}
\label{syst-xy}
\begin{cases}
x_t&=e^{2\lambda t}y\\
y_t&=-q(t)e^{-2\lambda t}x
\end{cases}
\end{equation}
where we have set $V(t):=-\lambda^2+q(t)$. In order to prescribe the asymptotic behaviour of $w^-$ at $-\infty$, we choose $t_0<0$ small enough and solve the Cauchy problem associated to system (\ref{syst-xy}) with initial values $x(t_0)=1$ and $y(t_0)=0$ and from Fubini's Theorem we get
\begin{equation}\notag
x(t)=1+\int_t^{t_0} x(\zeta)q(\zeta)\frac{1-e^{-2\lambda(\zeta-t)}}{2\lambda}d\zeta.
\end{equation}
We estimate for $t\leq t_0$,
$$
\begin{aligned}
|x(t)| & \leq 1 + \int_{t}^{t_0} |x(\zeta)||q(\zeta)| \frac{1-e^{-2\lambda(\zeta-t)}}{2\lambda}d\zeta,\\
&\leq 1 + \int_{t}^{t_{\sigma}} |x(\zeta)|\frac{|q(\zeta)|}{2\lambda}d\zeta.
\end{aligned}
$$
Using the Gronwall inequality and the behaviour of $q$ at $-\infty$,
\begin{equation*}
%\begin{aligned}
|x(t)|\le\exp\bigg(\int_t^{t_0}\frac{|q(\zeta)|}{2\lambda}d\zeta\bigg)\le \exp\bigg(\int_{-\infty}^{t_0}\frac{|q(\zeta)|}{2\lambda}d\zeta\bigg)<\infty
%\exp\bigg(c\sigma\int_{-\infty}^{t_0}e^{2\zeta}\bigg)\\
%&\le \exp\bigg(\frac{c}{2}e^{-2M}\bigg)<\infty
%\end{aligned}
\end{equation*}
for $t\leq t_0$. We conclude that $x\in L^{\infty}(-\infty, t_0)$. Moreover, taking $\tilde{t}<t<t_0$, we can see that
\begin{equation}
\label{x-lim}
\begin{aligned}
&|x(t)-x(\tilde{t})|=\left|\int_{\tilde{t}}^t e^{2\lambda s} ds\int_s^{t_0}q(\tau) e^{-2\lambda\tau}d\tau\right|\\
&\le 
\begin{cases}
c\int_{\tilde{t}}^t e^{2\lambda s}ds,\,\eta>2\lambda\\
c\int_{\tilde{t}}^t e^{\eta s}ds,\,\eta<2\lambda\\
\int_{\tilde{t}}^t e^{2\lambda s}(t_0-s)ds,\,\eta=2\lambda
\end{cases}
\to 0\qquad\text{as $t,\,\tilde{t}\to-\infty$,}
\end{aligned}
\end{equation}
so that there exists $\underline{x}:=\lim_{t\to-\infty} x(t)\in\R$. Moreover $\underline{x}>0$ provided $t_0$ is small enough. Using that $x$ is bounded on $(-\infty,t_0)$, it is possible to estimate $y$ in the form
\begin{equation}\notag
|y(t)|\le\int_t^{t_0}|q(\tau)|e^{-2\lambda\tau}d\tau\le\int_t^{t_0}e^{(\eta-2\lambda)\tau}d\tau\le
\begin{cases}
|t-t_0|,\,\eta=2\lambda\\
e^{(\eta-2\lambda) t},\,\eta<2\lambda\\
c,\,\eta>2\lambda
\end{cases}
\end{equation}
so that $\partial_t x(t)\to 0$ as $t\to-\infty$. Hence the asymptotic behaviour of $w^-$ at $-\infty$ is given by $$w^-\sim \underline{x}e^{-\lambda t},\,\partial_t w^-\sim-\lambda\underline{x}e^{-\lambda t}\qquad\text{as $t\to-\infty$.}$$
Multiplying by a constant, we can assume that $\underline{x}=-1$. 
Now we set $$w^+(t):=w^-(t)\int_{-\infty}^t \frac{ds}{(w^-)^2(s)}\qquad\forall\, t<t_0$$
and we note that $w^+$ is a solution to (\ref{eq-model-EF}) on $(-\infty, t_0)$, $w^+$ and $w^-$ are linearly independent and, multiplying if necessary by a constant, the required asymptotic behaviour at $-\infty$. %We note that we only need the asymptotic behaviour of $x$ and $\partial_t x$ as $t\to-\infty$, not the behaviour of the remainder.\\

%Since the potential $V$ is smooth, then $w$ is defined on the whole $\R$. The same argument applied to the reflected equation $\partial_{tt}w+V(-t)w$ gives the existence $t_1>0$ and of two linearly independent solutions $\varphi$ and $\tilde{\varphi}$ to (\ref{eq-model-EF}) in $(t_1,\infty)$ such that
%\begin{equation}
%\label{fund-set+infty}
%\varphi\sim e^{\Lambda t},\,\partial_t\varphi\sim \Lambda e^{\Lambda t}\qquad\tilde{\varphi}\sim e^{-\Lambda t},\,\partial_t\tilde{\varphi}\sim -\Lambda e^{-\Lambda t}
%\end{equation}
%as $t\to\infty$. As a consequence, our solution $w$ is a linear combination of $\varphi$ and $\tilde{\varphi}$ on $(t_1,\infty)$, so that the estimate at $+\infty$ in (\ref{est-lambda>0}) is true.

\item Now we consider the case $\lambda=0$. We study equation (\ref{eq-model-EF}) in $(-\infty,t_0)$, with $t_0<0$ and $w^-(t)=tx(t)$, so that $x$ has to satisfy the equation
$$\partial_t(t^2\partial_t x)=-q(t)t^2 x(t),$$
which is equivalent to the system
\begin{equation}
\label{syst-neg-t}
\begin{cases}
\partial_t x=t^{-2}y,\qquad x(t_\sigma)=1\\
\partial_t y=-q(t)t^2 x,\qquad y(t_\sigma)=0
\end{cases}
\end{equation}
in $(-\infty,t_0)$. Integrating (\ref{syst-neg-t}) and using the initial conditions
\begin{equation}
\label{syst-xy-int}
x(t)=1-\int_t^{t_0}\tau^{-2} y(\tau)d\tau,\qquad y(t)=\int_t^{t_0} q(\tau)\tau^2 x(\tau)d\tau,
\end{equation}
thus, by Fubini's Theorem,
\begin{equation}\notag
\begin{aligned}
x(t)&=1-\int_t^{t_0} \tau^{-2}\int_\tau^{t_0} q(\xi)\xi^2 x(\xi)d\xi d\tau\\
&=1-\int_t^{t_0}q(\xi)\xi^2 x(\xi)\int_t^\xi \tau^{-2}d\tau d\xi,
\end{aligned}
\end{equation}
so that
$$|x(t)|\le 1-\int_t^{t_0}|q(\xi)\xi x(\xi)|d\xi$$
therefore, by The Gronwall inequality,
\begin{equation}\notag
|x(t)|\le\exp\left(-\int_t^{t_0}|q(\xi)\xi| d\xi\right)<\infty.
\end{equation}
hence $x\in L^\infty(-\infty,t_0)$ and, arguing as in point (\ref{est-lambda=0}), it is possible to see that there exists $\underline{x}:=\lim_{x\to-\infty} x(t)>0$. Using (\ref{syst-xy}), we can see that $y=\underline{x}(c+O(t^2e^{\eta t}))\in L^\infty(-\infty,t_0)$, therefore using (\ref{syst-xy}) we have
$$\partial_t x=\underline{x}ct^{-2}+O(e^{\eta t}).$$
The fundamental Theorem of calculus yields that $$x=\underline{x}-c\underline{x}t^{-1}+O(e^{\eta t}).$$
In terms of $w^-$, this yields that
\begin{equation}\label{eqn:asympt_w-}
w^-(t)= \underline{x}t-\underline{x}c+O(te^{\eta t}),\,\partial_t w^-(t)=\underline{x}+O(te^{\eta t}) \qquad t\to-\infty.
\end{equation}
Multiplying by a constant, we can assume that $\underline{x}=-1$. 
%We stress that
%\begin{equation}
%v(t_\sigma)=t_\sigma,\qquad \partial_t v(t_\sigma)=1.
%\end{equation}
Another linearly independent solution to (\ref{eq-model-EF}) is given by
\begin{equation}\notag
w^+(t)=w^-(t)\int_{-\infty}^t (w^-(\tau))^{-2} d\tau.
\end{equation}
A careful computation using \eqref{eqn:asympt_w-} shows using that $w^+$ has the required asymptotic behaviour as $t\to-\infty$.
%As above, it is possible to see that if $t_0$ is large enough, then there exists a fundamental set $\{\varphi,\tilde{\varphi}\}$ on $(t_0,\infty)$ with asymptotic behaviour given by (\ref{fund-set+infty}), so that $w$ has to fulfil (\ref{est-lambda=0}) for $t\to\infty$ too.
\end{enumerate}
\end{proof}
We are interested in the fundamental system of (\ref{eq-Jacobi-modos}), described in the following Proposition.

\begin{proposition}
\label{prop-fund-syst}
Let $\Sigma=\Sigma^-_{m,n}$ one of the hypersurfaces constructed in Theorem \ref{th_Al} and let $(i,j)\in\N\times\N$. Then equation (\ref{eq-Jacobi-modos}) has a fundamental system $\{\phi^\pm_{ij}\}$ satisfying
\begin{equation}\notag
\partial_s^{(l)}\phi^\pm_{ij}=
\begin{cases}
O\left(s^{-l-\frac{N-2}{2}\pm\sqrt{\left(\frac{N-2}{2}\right)^2-(N-1)+\nu_{ij}}}\right)\qquad s\to\infty\\
O\left(s^{-l-\frac{n-2}{2}\pm\sqrt{\left(\frac{n-2}{2}\right)^2+\mu_j^n}}\right)\qquad s\to 0^+
\end{cases}
\end{equation}
if $i\ge 0$ and $j\ge 1$, where $\nu_{ij}:=(N-1)\left(\frac{\mu_i^m}{m-1}+\frac{\mu_j^n}{n-1}\right)$, and 
\begin{equation}\notag
\begin{aligned}
\partial_s^{(l)}\phi^\pm_{i0}&=O\left(s^{-l-\frac{N-2}{2}\pm\sqrt{\left(\frac{N-2}{2}\right)^2-(N-1)+\nu_{i0}}}\right)\qquad s\to\infty,\,l\in\{0,1\}\\
\phi^+_{i0}(s)&=O(1),\, \partial_s\phi^+_{i0}=o(1)\qquad s\to 0^+\\
\phi^-_{i0}(s)&=O(s^{2-n})\,\text{for $n\ge3$},\,\phi^-_{i0}(s)=O(|\log s|)\,\text{for $n=2$}\qquad s\to 0^+,\\ &\partial_s\phi^-_{i0}=O(s^{1-n})\qquad s\to 0^+
\end{aligned}
\end{equation}
\end{proposition}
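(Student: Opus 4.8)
The plan is to treat each Fourier mode $(i,j)$ separately by passing to its Emden--Fowler form \eqref{eq-Jacobi-modos-EF}, so that everything reduces to the scalar equation $\partial_{tt}w_{ij}+V_{ij}(t)w_{ij}=0$, and then to read off the fundamental system of \eqref{eq-Jacobi-modos} from Lemma \ref{lemma-syst} after undoing the substitutions $\phi_{ij}(s)=w_{ij}(t)\,p(t)$ and $s=e^t$. The whole argument hinges on identifying, in the two regimes $t\to+\infty$ (that is $s\to\infty$) and $t\to-\infty$ (that is $s\to0^+$), the constant $-\lambda^2$ to which $V_{ij}$ converges together with the exponential rate of convergence, since this rate is exactly hypothesis \eqref{V-infty} of Lemma \ref{lemma-syst}.

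First I would analyse the geometric coefficients $a,b,\alpha,\beta$ and the weight $p$ of \eqref{def-p(t)} at infinity. Since $\Sigma=\Sigma^-_{m,n}$ is asymptotic to the Lawson cone, \eqref{dec-w} and the differentiated estimate of Theorem \ref{th-dec-w} give $a(s)=\sqrt{\tfrac{m-1}{N-1}}\,s\,(1+o(1))$ and $b(s)=\sqrt{\tfrac{n-1}{N-1}}\,s\,(1+o(1))$, with corrections decaying like a negative power of $s$, hence exponentially in $t$. A direct computation then yields $s\,\alpha(s)\to N-1$, so that $p(t)\sim s^{-\frac{N-2}{2}}$, while $\big(\tfrac{\mu_i^m}{a^2}+\tfrac{\mu_j^n}{b^2}\big)e^{2t}\to\nu_{ij}$ and $V(t)\to-\big(\tfrac{N-2}{2}\big)^2+(N-1)$; cf. \eqref{def-V(t)} and Remark \ref{remark-product-metric}. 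Hence $V_{ij}(t)\to-\Lambda^2$ with $\Lambda=\sqrt{(\tfrac{N-2}{2})^2-(N-1)+\nu_{ij}}\ge\Lambda_0>0$, the convergence being exponential. Applying Lemma \ref{lemma-syst}(\ref{est-lambda>0}) to the reflected potential $t\mapsto V_{ij}(-t)$ produces two solutions behaving like $e^{\pm\Lambda t}$ at $+\infty$; multiplying by $p$ and using $e^{\pm\Lambda t}=s^{\pm\Lambda}$ gives the claimed powers $s^{-\frac{N-2}{2}\pm\Lambda}$, and the derivative bounds follow from $\partial_s=s^{-1}\partial_t$ together with the control of $w$ and $\partial_t w$ in the lemma.

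Next I would run the analogous analysis at the cap $s\to0^+$. The geometric input here is that $\Sigma^-_{m,n}\subset E^-_{m,n}$ forces the surface to close up by letting the sphere $S^{n-1}$ collapse (it may touch $\{y=0\}$ but never $\{x=0\}$): the smooth profile satisfies $b(s)=s\,(1+O(s^2))$ and $a(s)\to a_0>0$, the parity of the capped curve making the remainders exponentially small in $t$. Consequently $s\,\alpha(s)\to n-1$, so $p(t)\sim s^{-\frac{n-2}{2}}$, while $\tfrac{\mu_i^m}{a^2}e^{2t}\to0$, $\tfrac{\mu_j^n}{b^2}e^{2t}\to\mu_j^n$ and $V(t)\to-\big(\tfrac{n-2}{2}\big)^2$, so that $V_{ij}(t)\to-\big((\tfrac{n-2}{2})^2+\mu_j^n\big)$; this is precisely the potential of the Euler equation attached to the collapsing $\R^n$-factor, whose indicial roots are $-\tfrac{n-2}{2}\pm\sqrt{(\tfrac{n-2}{2})^2+\mu_j^n}$. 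For $j\ge1$ one has $\mu_j^n>0$, so $\lambda:=\sqrt{(\tfrac{n-2}{2})^2+\mu_j^n}>0$ and Lemma \ref{lemma-syst}(\ref{est-lambda>0}) applies directly, giving after multiplication by $p$ the powers $s^{-\frac{n-2}{2}\pm\lambda}$. For $j=0$ one has $\lambda=\tfrac{n-2}{2}$: when $n\ge3$ part (\ref{est-lambda>0}) yields $\phi^+_{i0}=O(1)$, $\phi^-_{i0}=O(s^{2-n})$, whereas when $n=2$ one has $\lambda=0$ and the logarithmic case Lemma \ref{lemma-syst}(\ref{est-lambda=0}) gives $\phi^+_{i0}=O(1)$, $\phi^-_{i0}=O(|\log s|)$.

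It remains to assemble a genuine fundamental system with the two ends matched as in the statement. I would set $\phi^+_{ij}:=p\,w^+$ to be the solution that is recessive at the cap and $\phi^-_{ij}:=p\,w^-$ the one recessive at infinity; then the bound claimed at the opposite end is in each case the dominant (weaker) one, which holds automatically for every solution. Their linear independence — and hence that $\{\phi^+_{ij},\phi^-_{ij}\}$ is a fundamental system — is where the strict stability of $\Sigma$ enters: a solution recessive at both ends would be a nontrivial Jacobi field lying in $\mathcal{W}^{2,2}_\delta(\Sigma,|y|^{-N})$ for some $\delta<\Lambda_0=\bar{\delta}$, contradicting the injectivity of $A_\delta$ established in Proposition \ref{prop-injective}. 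The part I expect to be the main obstacle is the rigorous verification of hypothesis \eqref{V-infty} at both ends: at infinity it requires the full differentiated asymptotics of Theorem \ref{th-dec-w} (a bare $o(1)$ would not control the rate), while at the cap it requires a careful expansion of the smooth profile $(a(s),b(s))$ near the collapse of $S^{n-1}$ in order to pin down not only the limits of $\alpha$, $p$ and $V_{ij}$ but also the exponential rate of the remainder $V_{ij}+\lambda^2$. Once these expansions are secured, the matching of the two indicial systems and the passage between $w$ and $\phi$ through $p$ are routine.
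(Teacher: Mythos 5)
Your proposal follows essentially the same route as the paper: reduce each Fourier mode to the Emden--Fowler equation \eqref{eq-Jacobi-modos-EF}, apply Lemma \ref{lemma-syst} at $t\to-\infty$ and (after reflection) at $t\to+\infty$ to get the two indicial behaviours at each end, undo the substitution through $p(t)$, and rule out a solution recessive at both ends — which is what ties the two ends together and gives linear independence — by observing that such a solution would be a nontrivial Jacobi field in $X$, excluded by the strict stability of $\Sigma$ (the paper cites Proposition \ref{prop-strict-stability} directly where you cite the injectivity of $A_\delta$; these are equivalent here). Your additional care in verifying the exponential rate in hypothesis \eqref{V-infty} at both ends, via the differentiated asymptotics of Theorem \ref{th-dec-w} and the expansion of $(a,b)$ at the cap, is exactly the (unwritten) content behind the paper's stated asymptotics for $V_{ij}$.
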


Due to the convention of having repeated eigenvalues indexed with different indexes, $\nu_{0j}=N-1$ for $j=1,\ldots,n$ and $\nu_{i0}=N-1$ for $i=1,\ldots,m$.

\begin{remark}
\label{rem-zeta}
We note that, by Proposition \ref{prop-fund-syst}, the Jacobi field $\zeta(y):=y\cdotp\nu_\Sigma(y)$ always fulfils $\zeta(y)=|y|^{-\frac{N-2}{2}+\Lambda_0}(1+o(1))$ as $|y|\to\infty$. This already follows from the strict minimality of $C_{m,n}$ if either $m+n\ge 9$ or $m+n=8$ with $|m-n|\le 2$, while the result is new in case $m=2$, $n=6$ or $m=6$, $n=2$. In particular, the hypersurfaces constructed in Theorem \ref{th_Al} fulfil $\bar{\delta}=\Lambda_0>0$, as we mentioned in the proof of Proposition \ref{Prop-injectivity-part-case}.
\end{remark}
\begin{corollary}
Equation (\ref{eq-Jacobi-modos}) for $i=0$ and $j=1,\ldots,n$, admits a bounded solution $\phi^+_{0j}$ such that $\phi^+_{0j}(s)\to 1$ as $s\to\infty$ and $\phi^+_{0j}(s)=O(s)$ as $s\to 0^+$.
\label{lemma-0,1}
\end{corollary}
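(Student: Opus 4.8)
The plan is to deduce the statement as a direct specialization of Proposition~\ref{prop-fund-syst} to the indices $(i,j)=(0,j)$ with $j\in\{1,\dots,n\}$, and then to upgrade the resulting $O(1)$ bound at infinity to a genuine nonzero limit.

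First I would compute the relevant exponents. Since $\mu_0^m=0$ and $\mu_j^n=n-1$ for $j=1,\dots,n$, we get
$$\nu_{0j}=(N-1)\left(\frac{\mu_0^m}{m-1}+\frac{\mu_j^n}{n-1}\right)=(N-1)\cdot\frac{n-1}{n-1}=N-1.$$
Feeding $\nu_{0j}=N-1$ into the exponent at $s\to\infty$ of Proposition~\ref{prop-fund-syst}, the $+$ solution obeys
$$-\frac{N-2}{2}+\sqrt{\left(\frac{N-2}{2}\right)^2-(N-1)+(N-1)}=0,$$
so $\phi^+_{0j}=O(1)$ and $\partial_s\phi^+_{0j}=O(s^{-1})$ as $s\to\infty$. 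Using $\mu_j^n=n-1$ in the exponent at $s\to 0^+$ gives
$$-\frac{n-2}{2}+\sqrt{\left(\frac{n-2}{2}\right)^2+(n-1)}=-\frac{n-2}{2}+\frac{n}{2}=1,$$
which is precisely the claimed $\phi^+_{0j}(s)=O(s)$ as $s\to 0^+$.

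It remains to promote $\phi^+_{0j}=O(1)$ to $\phi^+_{0j}(s)\to 1$. Here I would trace back the construction behind Proposition~\ref{prop-fund-syst}: writing $\phi^+_{0j}(s)=w^+(t)p(t)$ with $s=e^t$ and $p$ as in~(\ref{def-p(t)}), the mode $(0,j)$ has indicial parameter $\lambda=\frac{N-2}{2}>0$ (since $m+n\ge 8$), so $w^+$ is the solution supplied by case~(\ref{est-lambda>0}) of Lemma~\ref{lemma-syst}, applied at $t\to+\infty$ after the reflection $t\mapsto-t$; it satisfies the sharp asymptotics $w^+(t)\sim\underline{x}\,e^{\frac{N-2}{2}t}$ with $\underline{x}>0$. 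On the other hand $\alpha(e^\tau)e^\tau\to N-1$ as $\tau\to\infty$ with an integrable error (by the decay of $w$ in Theorem~\ref{th-dec-w}), whence $p(t)\sim c\,e^{-\frac{N-2}{2}t}$ for some $c>0$. Multiplying, $\phi^+_{0j}(s)=w^+(t)p(t)\to\underline{x}c\neq 0$, and normalizing by this constant yields $\phi^+_{0j}(s)\to 1$.

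The only delicate point is this last normalization: the $O$-bounds are immediate, but to exhibit a \emph{nonzero} limit one must know that the $+$ solution carries the dominant indicial root $s^0$ with nonvanishing coefficient, while its companion $\phi^-_{0j}\sim s^{-(N-2)}$ decays strictly faster; as these exponents differ, the limit of $\phi^+_{0j}$ cannot vanish, and its value is pinned down by the sharp asymptotics of Lemma~\ref{lemma-syst}. This is consistent with the geometric fact that the modes $(0,j)$, $j=1,\dots,n$, correspond to the translation Jacobi fields $y\mapsto\nu_\Sigma(y)\cdot e_{m+\ell}$ in the $\R^n$ directions (cf. Remark~\ref{rmk:space_transl}), which are bounded and tend to the nonzero constant $\nu_C\cdot e_{m+\ell}$ at infinity.
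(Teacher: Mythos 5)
Your proposal is correct and follows essentially the same route as the paper, which states this corollary without a separate proof as a direct specialization of Proposition~\ref{prop-fund-syst} to $(i,j)=(0,j)$, where $\nu_{0j}=N-1$ makes the exponent at infinity vanish and $\mu_j^n=n-1$ gives the exponent $1$ at the origin. You also correctly identify and fill the one non-immediate point, namely that the $O(1)$ bound upgrades to a nonzero limit because $\alpha^+_{0j}\neq 0$ (established in the proof of Proposition~\ref{prop-fund-syst} via strict stability) together with the sharp asymptotics of Lemma~\ref{lemma-syst} and of $p(t)$ pin down a nonzero constant, which one then normalizes to $1$.
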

\begin{corollary}
Equation (\ref{eq-Jacobi-modos}) for $j=0$ and $i=1,\ldots,m$ admits a bounded solution $\phi^+_{i0}$ such that $\phi^+_{i0}(s)\to 1$ as $s\to\infty$ and $\partial_s\phi^+_{i0}(s)\to 0$ as $s\to 0^+$.
\label{lemma-1,0}
\end{corollary}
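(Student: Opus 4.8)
The plan is to obtain the statement as a specialization of Proposition \ref{prop-fund-syst} to the indices $j=0$, $i\in\{1,\dots,m\}$, upgrading only the boundedness at infinity to a genuine limit. First I would note that for these indices $\mu_i^m=m-1$, the coordinate spherical harmonics being the relevant eigenfunctions, so that $\nu_{i0}=(N-1)\frac{\mu_i^m}{m-1}=N-1$ and hence
$$\sqrt{\left(\frac{N-2}{2}\right)^2-(N-1)+\nu_{i0}}=\frac{N-2}{2}.$$
The $+$ indicial exponent at infinity is therefore $-\frac{N-2}{2}+\frac{N-2}{2}=0$, and Proposition \ref{prop-fund-syst} immediately gives $\phi^+_{i0}(s)=O(1)$, $\partial_s\phi^+_{i0}(s)=O(s^{-1})$ as $s\to\infty$, while $\phi^+_{i0}(s)=O(1)$ and $\partial_s\phi^+_{i0}(s)=o(1)$ as $s\to 0^+$. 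In particular $\phi^+_{i0}$ is bounded on $(0,\infty)$, and the required condition $\partial_s\phi^+_{i0}(s)\to 0$ at the origin is exactly the $o(1)$ estimate already contained in the Proposition.

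The one point that is not purely a specialization is that $\phi^+_{i0}(s)$ converges to a nonzero constant at infinity rather than merely remaining bounded. To see this I would work in the Emden--Fowler description of Section \ref{sec-Jacobi}, writing $\phi^+_{i0}(s)=p(t)w^+_{i0}(t)$ with $s=e^t$, where $w^+_{i0}$ solves $\partial_{tt}w+V_{i0}(t)w=0$ and $V_{i0}(t)\to-\left(\frac{N-2}{2}\right)^2$ as $t\to+\infty$; by (\ref{metric-g-asymptotic}) and (\ref{dec-w}) this convergence is exponentially fast, so the perturbation of the potential is integrable at $+\infty$. The two roots of the limiting equation are $\pm\frac{N-2}{2}$, which are distinct, so no logarithmic corrections arise, and the analysis of Lemma \ref{lemma-syst} carried out at $+\infty$ instead of $-\infty$ furnishes $w^+_{i0}(t)=c\,e^{\frac{N-2}{2}t}(1+o(1))$ with $c\neq 0$. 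Since $\alpha(e^t)e^t\to N-1$ on the asymptotically conical end, the weight (\ref{def-p(t)}) satisfies $p(t)\,e^{\frac{N-2}{2}t}\to c'>0$, so that $\phi^+_{i0}(s)\to cc'\neq 0$; rescaling the solution by $(cc')^{-1}$ yields $\phi^+_{i0}(s)\to 1$.

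The main obstacle is thus exactly this passage from boundedness to convergence with a nonzero limit; it is resolved by the distinctness of the two indicial roots in this mode, which both excludes resonant logarithmic terms and guarantees, by construction, that the leading coefficient of the growing solution $w^+_{i0}$ does not vanish. As a consistency check, $\phi^+_{i0}$ is the $(i,0)$ Fourier mode of the bounded Jacobi field $y\mapsto\nu_\Sigma(y)\cdot e_i$ with $e_i$ in the $\R^m$-factor, coming from translation; this independently predicts both the nonzero limit at infinity and the Neumann-type condition $\partial_s\phi^+_{i0}\to 0$ at the axis where the sphere $S^{n-1}$ collapses and $\Sigma$ caps off smoothly.
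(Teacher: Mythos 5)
Your reduction to Proposition \ref{prop-fund-syst} is the paper's route as well (the corollary is stated there with no separate proof precisely because it is a specialization of that proposition), and the bookkeeping in your first paragraph is correct: $\mu_i^m=m-1$ for $i=1,\dots,m$, so $\nu_{i0}=N-1$, the $+$ indicial exponent at infinity is $0$, and the behaviour at $s\to0^+$ (boundedness and $\partial_s\phi^+_{i0}=o(1)$) is read off directly. You also correctly identify the one genuinely nontrivial point: upgrading $\phi^+_{i0}=O(1)$ at infinity to convergence to a nonzero limit.

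However, your justification of that point contains a real gap. The solution $\phi^+_{i0}$ is normalized by its behaviour at $s\to0^+$ (equivalently, $w^+_{i0}$ is prescribed at $t\to-\infty$). Lemma \ref{lemma-syst} applied at $+\infty$ produces a \emph{new} fundamental system $v^\pm_{i0}$ with clean asymptotics $e^{\pm\frac{N-2}{2}t}$ there, but it says nothing about how $w^+_{i0}$ decomposes as $\alpha^+v^+_{i0}+\alpha^-v^-_{i0}$; in particular, the distinctness of the two indicial roots does not ``guarantee by construction'' that $\alpha^+\neq0$. If $\alpha^+$ vanished, $\phi^+_{i0}$ would decay like $s^{-(N-2)}$ at infinity while remaining regular at the axis, which is still consistent with every estimate you quoted. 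This nonvanishing is exactly the step the paper proves inside Proposition \ref{prop-fund-syst}: if $\alpha^+_{ij}=0$, the Jacobi field $\phi^+_{ij}(s)u_i(\x)v_j(\y)$ would belong to the space $X$ of (\ref{def-X}), contradicting the strict stability of $\Sigma$ (Proposition \ref{prop-strict-stability}). Your closing ``consistency check'' --- that $\phi^+_{i0}$ is the $(i,0)$ Fourier mode of the translation Jacobi field $y\mapsto\nu_\Sigma(y)\cdot e_i$, which is bounded, smooth across the axis, and asymptotic to a nonzero constant --- would in fact serve as an alternative, self-contained proof of the nonzero limit, but as written you relegate it to a sanity check rather than using it to close the argument. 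Either invoke the strict-stability step from the proof of Proposition \ref{prop-fund-syst} explicitly, or promote the translation-field identification to the actual proof.
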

Now we prove Proposition \ref{prop-fund-syst}.
\begin{proof}[Proof of Proposition \ref{prop-fund-syst}]

This proof is for all values of $i,j$.\\

First we consider the case $n\ge 3$. The asymptotic behaviour of $V_{ij}$ is given by
\begin{equation}\notag
V_{ij}(t)=-\left(\frac{n-2}{2}\right)^2-\mu_j^n+O(e^{2t})\qquad t\to-\infty
\end{equation}
so, by Lemma \ref{lemma-syst}, we can choose a fundamental system of solutions $w^\pm_{ij}$ to equation (\ref{eq-Jacobi-modos-EF}) such that
\begin{equation}
\label{as-be-w_0,1}
w^\pm_{ij}(t)\sim e^{\pm\sqrt{\left(\frac{n-2}{2}\right)^2+\mu_j^n}t},\,\partial_t w^\pm_{ij}(t)\sim\pm\sqrt{\left(\frac{n-2}{2}\right)^2+\mu_j^n}e^{\pm\sqrt{\left(\frac{n-2}{2}\right)^2+\mu_j^n}t}\text{ as $t\to-\infty$}
\end{equation}
Going back to the original variables, or in other words setting $\phi^+_{ij}(s):=p(t)w^+_{ij}(t)$ and multiplying by a constant again if necessary, we can see that $\phi^+_{ij}$ satisfy the required asymptotic behaviour as $s\to 0^+$. We note that, for $j>0$, only the asymptotic behaviour of $w^+_{ij}$ is used, while for $j=0$ we actually need the remainders estimates provided by Lemma \ref{lemma-syst} for the derivatives $\partial_s\phi^+_{ij}$.\\

In order to deal with the asymptotic behaviour of $\phi^+_{ij}$ as $s\to\infty$ we note that
$$V_{ij}(t)=-\left(\frac{N-2}{2}\right)^2+(N-1)-\nu_{ij}+O(e^{-\eta t})\qquad t\to\infty$$
so that, by Lemma \ref{lemma-syst} applied to the reflected equation $\partial^2_t w+V_{ij}(-t)w=0$, we can prove the existence of a fundamental system $v^\pm_{ij}$ whose asymptotic behaviour at infinity is given by
$$v^\pm_{ij}\sim e^{\pm\Lambda_{ij}t},\,\partial_t v^\pm_{ij}\sim e^{\pm\Lambda_{ij}t}=\pm\Lambda_{ij} e^{\pm\Lambda_{ij}t}\qquad t\to\infty,$$
where $\Lambda_{ij}^2:=\left(\frac{N-2}{2}\right)^2-(N-1)+\nu_{ij}$. Writing
$$\phi^+_{ij}(s)=(\alpha^+_{ij} v^+_{ij}(t)+\alpha^-_{ij}v^-_{ij}(t))p(t)$$

We claim that $\alpha_{ij}^+\neq 0$. If $\alpha^+_{ij}=0$, then the corresponding Jacobi field $$\psi(y):=\phi^+_{ij}(s)u_i(\x)v_j(\y),\qquad\forall\,y=(a(s)\x,b(s)\y)\in \Sigma,$$ 
would belong to $X$ (see \eqref{def-X}), 
which is not possible, due to the strict stability of $\Sigma$ (see Proposition \ref{prop-strict-stability}) and this proves the claim. Therefore $\phi^+_{ij}$ has the required asymptotic behaviour as $s\to\infty$.\\ 

Proceeding in a similar manner, we can see that $\phi^-_{ij}(s)=v^-_{ij}(t)p(t)$ has the required behaviour as $s\to\infty$. On the other hand, using the fundamental system $w^\pm_{ij}$, we can write
$$\phi^-_{ij}(s)=(\beta^+_{ij}w^+_{ij}(t)+\beta^-_{ij} w^-_{ij}(t))p(t).$$
We claim now that $\beta_{ij}^-\neq 0$. If $\beta^-_{ij}=0$, then the corresponding Jacobi field 
$$\tilde{\psi}(y):=\phi^+_{ij}(s)u_i(\x)v_j(\y),\qquad\forall\,y=(a(s)\x,b(s)\y)\in \Sigma,$$ 
would belong to $X$, which as before yields a contradiction. As a consequence, $\phi^-_{ij}$ fulfils the required asymptotic behaviour as $s\to 0^+$ to.\\

The case $n=2$ is similar, it relies on part $(2)$ of Lemma \ref{lemma-syst} to compute the asymptotic behaviour of $\phi_{i0}^\pm$.
\end{proof}
Now we are ready to prove Theorem \ref{th-Jac-Sigma}.
\begin{proof}[Proof of Theorem \ref{th-Jac-Sigma}]
First we note that any bounded Jacobi field $\phi$ of $\Sigma$ is $C^\infty(\Sigma)$. In fact $\phi\in L^\infty(\Sigma)$ implies that $\phi\in L^p_{loc}(\Sigma)$ for any $p>1$, so that, by the elliptic estimates, $\phi\in W^{2,p}_{loc}(\Sigma)$ for any $p>1$. Taking $p>N$ we can see that $\phi\in C^{1,\alpha}_{loc}(\Sigma)$ with $\alpha:=1-\frac{N}{p}$ so that, by a bootstrap argument, $\phi$ is smooth.\\

In order to prove that dim $K(\Sigma)=N+2$ we first observe that the symmetric Jacobi field $y\mapsto y\cdotp \nu_\Sigma(y)=\phi_{00}^+(s)\in K(\Sigma)$, moreover the only bounded solutions $\phi_{ij}^\pm$ of equations (\ref{eq-Jacobi-modos}) are $\phi^+_{00}$, $\phi_{0j}^+$ for $j=1,\ldots,n$ and $\phi^+_{i0}$ for $i=1,\ldots,m$.\\

The eigenspace corresponding to $\mu^m_{1}=\cdots=\mu^m_{m}$ on $S^{m-1}$ has dimension $m$. Taking a basis $\{u_{i}\}_{1\le i\le m}$ of such space we obtain $m$ linearly independent bounded Jacobi fields $\Phi_i(y):=\phi^+_{i0}(s)u_{i}(\x)$ of $\Sigma$ which are not $O(m)\times O(n)$-invariant. Similarly, $\Psi_j(y):=\phi^+_{0j}(s)v_{j}(\y)$ are $n$ linearly independent Jacobi fields of $\Sigma$ enjoying the same properties.\\

Summarizing, there is one linearly independent symmetric bounded Jacobi field (generated by dilations of $\Sigma$) and $N+1$ non-symmetric bounded linearly independent Jacobi fields.\\ 

Appealing to Remark \ref{rmk:space_transl} we define
$$
T={\rm span}\{\Phi_i\}_{i=1}^m \oplus {\rm span}\{ \Psi_j\}_{j=1}^n,
$$
so that $K(\Sigma)=T\oplus {\rm span}\{y\cdotp\nu_\Sigma(y)\}$.\\ 

Since the Jacobi fields generated by translations $y\mapsto \nu_{\Sigma}(y)\cdot a$  belong to $L^{\infty}(\Sigma)\backslash{\rm span}\{y\cdotp\nu_\Sigma(y)\}$, for any $a\in\R^{N+1}\backslash\{0\}$, we can use their Fourier decomposition to prove that the space $$\tilde{T}:=\{\nu_{\Sigma}(y)\cdot a:\,a\in\R^{N+1}\}$$
is a subspace of $T$. Since $\dim(T)=\dim(\tilde{T})=N+1$, then  $T=\tilde{T}$. This concludes the proof.
\end{proof}

Now we study the Jacobi fields of the surfaces constructed in Theorem \ref{th-Sigma-low-dim}. In particular we prove the following result
\begin{theorem}
\label{th-Jabobi-Sigma-low-dim}
Let $\Sigma$ be one of the surfaces constructed in Theorem \ref{th-Sigma-low-dim}. Then the space of bounded $O(m)\times O(n)$-invariant Jacobi fields is generated by the Jacobi field $\zeta(y):=y\cdotp \nu_\Sigma(y)$, corresponding to dilations. Moreover $|\zeta(y)|\le c|y|^{-\frac{N-2}{2}}$ and $$\limsup_{|y|\to\infty}|y|^{\frac{N-2}{2}}|\zeta(y)|> 0.$$ 
\end{theorem}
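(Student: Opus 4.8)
The plan is to reduce the classification to the single second-order ODE governing the rotationally invariant modes, and to read off boundedness from the behaviour of its solutions at the two ends of $\Sigma$. Since $\Sigma=\Sigma_{m,n}$ is $O(m)\times O(n)$-invariant, an invariant Jacobi field depends only on the arclength parameter $s$; thus in the Fourier decomposition only the constant mode $u_0v_0$ survives, and the $O(m)\times O(n)$-invariant Jacobi fields correspond exactly to the solutions $\phi_{00}(s)$ of (\ref{eq-Jacobi-modos}) with $i=j=0$, which form a two-dimensional space. The two ends are the pole $s\to0^+$, where $\Sigma$ meets $\R^m\times\{0\}$ orthogonally and $S^{n-1}$ collapses, and the end $s\to\infty$, where $\Sigma$ is a normal graph over $C_{m,n}$.

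First I would treat the pole. In the Emden-Fowler variable $s=e^t$, $\phi_{00}=p(t)w_{00}(t)$, the potential of (\ref{eq-Jacobi-modos-EF}) satisfies $V_{00}(t)\to-\left(\frac{n-2}{2}\right)^2$ as $t\to-\infty$, so Lemma \ref{lemma-syst} (part (1) for $n\ge3$, part (2) for $n=2$) yields a fundamental system whose images behave, exactly as for the mode $(i,0)$ in Proposition \ref{prop-fund-syst}, like $\phi^+_{00}=O(1)$ and $\phi^-_{00}=O(s^{2-n})$ (resp. $O(|\log s|)$ if $n=2$); thus one solution is bounded and the other blows up at the pole. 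At the end $s\to\infty$ the situation differs from the stable case: since $m+n\le7$ the cone is unstable and, for the mode $(0,0)$, $V_{00}(t)\to\Lambda^2:=(N-1)-\left(\frac{N-2}{2}\right)^2>0$ with $V_{00}-\Lambda^2$ exponentially small (by the decay (\ref{dec-w}) of $w$). Hence $\partial_{tt}w+V_{00}w=0$ is oscillatory at $+\infty$, no nontrivial solution $w_{00}$ decays to zero, and since $p(t)\sim c\,e^{-\frac{N-2}{2}t}=c\,s^{-\frac{N-2}{2}}$ every solution $\phi_{00}$ decays like $|y|^{-\frac{N-2}{2}}$. In particular boundedness imposes no condition at infinity, so the bounded invariant Jacobi fields form precisely the one-dimensional space $\mathrm{span}\{\phi^+_{00}\}$ cut out by regularity at the pole.

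Next I would identify this line with $\mathrm{span}\{\zeta\}$. The dilation field $\zeta(y)=y\cdot\nu_\Sigma(y)$ is smooth on all of $\Sigma$, hence bounded at the pole, so $\zeta=c_0\phi^+_{00}$; it is $O(m)\times O(n)$-invariant because $\Sigma$ is; and $c_0\ne0$, since $\zeta\equiv0$ would force the position vector to be everywhere tangent to $\Sigma$, i.e. $\Sigma$ to be a cone, contrary to Theorem \ref{th-Sigma-low-dim}. Therefore $\zeta$ generates the bounded invariant Jacobi fields. The bound $|\zeta(y)|\le c|y|^{-\frac{N-2}{2}}$ follows from $\phi^+_{00}=O(s^{-\frac{N-2}{2}})$ at infinity together with the continuity of $\zeta$ on the compact part and the inequality $|y|\ge\mathrm{dist}(\Sigma,C_{m,n})=1$ valid on $\Sigma$ (as $0\in C_{m,n}$), so that $|y|^{\frac{N-2}{2}}|\zeta|$ is bounded on all of $\Sigma$; the same bound is also immediate from Theorem \ref{th-dec-w-low-dim}.

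Finally, writing $\zeta=c_0\phi^+_{00}$ with $c_0\ne0$ and $\phi^+_{00}=p(t)w_{00}(t)$, the lower oscillation bound reduces to $\limsup_{t\to\infty}|w_{00}(t)|>0$ for the nontrivial solution $w_{00}$. The main obstacle is exactly here, and more generally in the analysis at infinity: Lemma \ref{lemma-syst} is stated only for potentials tending to a nonpositive constant $-\lambda^2$, so it governs the pole but not the oscillatory regime $V_{00}\to\Lambda^2>0$ produced by the instability of the cone, which I must handle directly. The natural tool is the energy $E:=w_{00}^2+\Lambda^{-2}(\partial_tw_{00})^2$, for which $|E'|\le C|V_{00}-\Lambda^2|\,E$; since $V_{00}-\Lambda^2$ is exponentially small, Gronwall's inequality gives $E\to E_\infty\in(0,\infty)$, and Sturm comparison with $\cos(\Lambda t)$ produces critical points $t_k\to\infty$ of $w_{00}$ at which $|w_{00}(t_k)|^2=E(t_k)\to E_\infty$. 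Hence $\limsup_{t\to\infty}|w_{00}|=\sqrt{E_\infty}>0$, and since $s^{\frac{N-2}{2}}p(t)\to c>0$ and $|y|\sim s$, this yields $\limsup_{|y|\to\infty}|y|^{\frac{N-2}{2}}|\zeta(y)|>0$, completing the plan.
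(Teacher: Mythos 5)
Your proposal is correct and follows essentially the same route as the paper: reduction to the two-dimensional space of solutions of the radial mode $(0,0)$, the Emden--Fowler substitution $\varphi=p(t)w(t)$, Lemma \ref{lemma-syst} at the pole $s\to0^+$ to see that exactly one solution stays bounded there, and the observation that at $s\to\infty$ the potential tends to the positive constant $(N-1)-\bigl(\tfrac{N-2}{2}\bigr)^2$ so that all solutions are bounded and none decays, whence the bounded invariant Jacobi fields form the line spanned by $\zeta$. The only cosmetic difference is that for the oscillatory end the paper invokes its Lemma \ref{lemma-syst-fund-inst} (a fundamental system asymptotic to $\cos(\lambda t)$, $\sin(\lambda t)$, obtained by an integral-equation/Gronwall argument), while you substitute an equivalent self-contained energy--Gronwall argument; both establish the same non-decay and hence the $\limsup$ claim.
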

\begin{remark}
Theorem \ref{th-Jabobi-Sigma-low-dim} implies Theorem \ref{th-nondegeneracy}, part $(2)$.
\end{remark}
The proof relies on the following Lemma.
\begin{lemma}
\label{lemma-syst-fund-inst}
Let $V\in C^\infty(\R)$ be a potential such that
\begin{equation}
\label{cond-V-inst}
|V(t)-\lambda^2|\le ce^{\eta t} \qquad\text{for $t\in(-\infty,t_0)$,}
\end{equation}
for some $\lambda,\eta>0$, $t_0\in\R$. Then the equation
\begin{equation}
\label{eq-Jacobi-EF-inst}
\partial_{tt}w+V(t)w=0
\end{equation}
has two linearly independent solutions $w^\pm$ such that $w^+\sim\cos(\lambda t)$, $w^-\sim\sin(\lambda t)$ as $t\to-\infty$ and their derivatives are bounded in $(-\infty,t_0)$
\end{lemma}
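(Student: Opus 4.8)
The plan is to follow the scheme of Lemma \ref{lemma-syst}, replacing the exponential solutions of the limiting constant-coefficient equation by oscillatory ones, since here the potential tends to $+\lambda^2$ rather than to $-\lambda^2$. Writing $V(t)=\lambda^2+q(t)$, assumption (\ref{cond-V-inst}) gives $|q(t)|\le ce^{\eta t}$ for $t<t_0$, so that $q$ is integrable near $-\infty$, with $\int_{-\infty}^{t_0}|q(s)|\,ds\le \frac{c}{\eta}e^{\eta t_0}<\infty$. Equation (\ref{eq-Jacobi-EF-inst}) then reads $\partial_{tt}w+\lambda^2 w=-q(t)w$, a perturbation of the harmonic oscillator, whose unperturbed solutions $\cos(\lambda t)$ and $\sin(\lambda t)$ have constant Wronskian $\lambda$.

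First I would recast the problem as a Volterra integral equation using the variation-of-parameters kernel $\frac{1}{\lambda}\sin(\lambda(t-s))$, seeking $w^+$ as the fixed point of
\begin{equation}\notag
w^+(t)=\cos(\lambda t)-\frac{1}{\lambda}\int_{-\infty}^t\sin(\lambda(t-s))\,q(s)\,w^+(s)\,ds,
\end{equation}
and similarly $w^-$ with $\sin(\lambda t)$ in place of $\cos(\lambda t)$. Because the kernel is bounded by $1/\lambda$ and $q$ is integrable at $-\infty$, the associated integral operator $K$ satisfies the standard Volterra estimate $|K^k w_0(t)|\le \|w_0\|_\infty \big(\frac{1}{\lambda}\int_{-\infty}^t|q|\big)^k/k!$, so the Neumann series $\sum_k K^k w_0$ converges uniformly on $(-\infty,t_0)$ and produces a bounded solution there, which extends to all of $\R$ by global existence for linear ODEs with smooth coefficients.

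Next I would read off the asymptotics directly from the integral equation. The estimate $|w^+(t)-\cos(\lambda t)|\le \frac{1}{\lambda}\int_{-\infty}^t|q(s)||w^+(s)|\,ds\le \frac{c}{\lambda\eta}\|w^+\|_\infty e^{\eta t}$ yields $w^+\sim\cos(\lambda t)$ as $t\to-\infty$ with remainder $O(e^{\eta t})$, and analogously $w^-\sim\sin(\lambda t)$. Differentiating the integral equation, the boundary term vanishes and one finds
\begin{equation}\notag
\partial_t w^+(t)=-\lambda\sin(\lambda t)-\int_{-\infty}^t\cos(\lambda(t-s))\,q(s)\,w^+(s)\,ds,
\end{equation}
whose integral term is again $O(e^{\eta t})$; hence $\partial_t w^\pm$ are bounded on $(-\infty,t_0)$. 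Linear independence then follows from Abel's formula: the equation carries no first-order term, so the Wronskian $w^+\partial_t w^--w^-\partial_t w^+$ is constant, and passing to the limit $t\to-\infty$ identifies it with $\lambda\neq0$.

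The computation is essentially routine, being a perturbation of a constant-coefficient ODE; the only point requiring care is the convergence of the improper integral at $-\infty$, which is guaranteed precisely by the exponential decay of $q$ imposed in (\ref{cond-V-inst}). This integrability is exactly what makes the oscillatory (unstable) regime treated here no harder than the exponential regime of Lemma \ref{lemma-syst}.
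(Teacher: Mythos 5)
Your proof is correct, and it is the same classical idea as the paper's -- asymptotic integration of a perturbed harmonic oscillator, made possible by the integrability of $q(t)=V(t)-\lambda^2$ at $-\infty$ -- but the implementation differs in a way worth recording. The paper substitutes $w=e^{i\lambda t}x$, converts the equation into a first-order system for $(x,y)$ with data at $t_0$, derives a Volterra equation for the amplitude $x$ via Fubini, and uses Gronwall plus a Cauchy-sequence argument to show $x(t)\to\underline{x}\neq 0$; the two real solutions and their independence are then extracted by taking real and imaginary parts (and the nonvanishing of $\underline{x}$ needs $t_0$ chosen negative enough). You instead write the Volterra equation directly for $w$ with the variation-of-parameters kernel $\lambda^{-1}\sin(\lambda(t-s))$ based at $-\infty$, solve it by the Neumann series with the factorial Volterra bound, and get linear independence from Abel's formula by evaluating the constant Wronskian in the limit $t\to-\infty$. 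Your route buys a cleaner normalization (each solution is pinned to $\cos(\lambda t)$ or $\sin(\lambda t)$ from the start, with an explicit $O(e^{\eta t})$ remainder for both $w^\pm$ and $\partial_t w^\pm$) and avoids the discussion of why the limiting amplitude is nonzero; the paper's complex first-order system is the same device it already used in Lemma \ref{lemma-syst}, so it keeps the two lemmas uniform. One cosmetic caveat, present in the statement itself: since $\cos(\lambda t)$ has zeros, ``$w^+\sim\cos(\lambda t)$'' must be read as $w^+(t)-\cos(\lambda t)\to 0$, which is exactly what your $O(e^{\eta t})$ estimate delivers.
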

\begin{proof}
We look for a solution $w$ to equation (\ref{eq-Jacobi-EF-inst}) of the form $w=e^{i\lambda t}x$. Differentiating twice it is possible to see that equation (\ref{eq-Jacobi-EF-inst}) is equivalent to the system
\begin{equation}
\begin{cases}
x_t=e^{-2i\lambda t}y\\
y_t=-q(t)e^{2i\lambda t}x,
\end{cases}
\end{equation}
where we have set $V(t)=\lambda^2+q(t)$. Imposing the initial conditions $x(t_0)=1$ and $y(t_0)=0$, the fundamental Theorem of calculus gives
$$x(t)=1-\int_t^{t_0} e^{-2i\lambda \zeta}y(\zeta)d\zeta,\qquad y(t)=\int_t^{t_0}q(\tau)e^{2i\lambda \tau}x(\tau)d\tau,$$
for $t<t_0$. By Fubini's Theorem we have
$$x(t)=1+i\int_t^{t_0}x(\tau)q(\tau)\frac{1-e^{2i\lambda(\tau-t)}}{2\lambda}d\tau,$$
so that
$$|x(t)|\le 1+\frac{1}{\lambda}\int_t^{t_0}|x(\tau)||q(\tau)| d\tau.$$
By Gronwall's inequality, this yields that
$$|x(t)|\le \exp\left(\int_t^{t_0}|q(\tau)|d\tau\right)\le \exp\left(\int_{-\infty}^{t_0}|q(\tau)|d\tau\right)<\infty,$$
so that $x\in L^\infty(-\infty,t_0)$. Taking $\tilde{t}<t<t_0$, we have
$$|x(t)-x(\tilde{t})|\le\left|\int_{\tilde{t}}^t e^{-2i\lambda\zeta}d\zeta\int_{t_0}^\zeta q(\tau)e^{2i\lambda\tau}d\tau\right|\le\int_{\tilde{t}}^t d\zeta\int_{t_0}^\zeta |q(\tau)|d\tau\le ce^{\eta t}\to 0$$
as $t\to\infty$, so that there exists $\underline{x}:=\lim_{t\to-\infty}x(t)$. Due to the initial condition, the real part of $\underline{x}$ is positive, so that $\underline{x}\ne 0$, so that $w(t)\sim \underline{x} e^{i\lambda t}$ as $t\to-\infty$. Moreover, it is possible to prove the lower bound
$$|y(t)|\le c\int_t^{t_0}|q(\tau)|d\tau\le c\int_{-\infty}^{t_0} e^{\eta \tau} d\tau<\infty,$$
so that $y\in L^\infty(-\infty,t_0)$ is also bounded. As a consequence the derivative $x_t=e^{i\lambda t}(x_t+i\lambda x)$ is also bounded in $(-\infty,t_0)$. As a consequence, taking a suitable linear combination of the real and the imaginary part of $w$, we have the statement.
\end{proof}
\begin{remark}
Lemma \ref{lemma-syst-fund-inst} actually shows that any non-trivial solution to equation (\ref{eq-Jacobi-EF-inst}) is bounded in a neighbourhood of $-\infty$ but it does not decay at $-\infty$. 
\end{remark}
Now we can prove Theorem \ref{th-Jabobi-Sigma-low-dim}
\begin{proof}
Any $O(m)\times O(n)$-invariant Jacobi field $\psi$ can be written as $\phi(y)=\varphi(s)$, where $y=(a(s)\x,b(s)\y)$ and $\varphi$ fulfils the ODE
$$\partial_{ss}\varphi+\alpha(s)\partial_s\varphi+\beta\varphi=0$$
in $(0,\infty)$, hence the space of $O(m)\times O(n)$-invariant Jacobi fields has dimension $2$. Introducing an Emden-Fowler change of variables $s=e^t$ and writing $\varphi(s)=p(t)w(t)$, we can see that $w$ fulfils (\ref{eq-Jacobi-EF-inst}) with $V:=V_{0,0}$. Using the asymptotic behaviour of $V$ at $\pm\infty$, namely
$$V(t)=
\begin{cases}
-\left(\frac{n-2}{2}\right)^2+O(e^{2t})\qquad t\to-\infty\\
-\left(\frac{N-2}{2}\right)^2+(N-1)+O(e^{\eta t})\qquad t\to-\infty\\
\end{cases}
$$
Lemma \ref{lemma-syst-fund-inst} and Lemma (\ref{lemma-syst}) we can see that (\ref{eq-Jacobi-EF-inst}) admits a fundamental system $w^\pm$ such that $w^+$ is bounded and $w^-$ is bounded in $(0,\infty)$ but explodes as $t\to-\infty$. More precisely, it fulfils
$$w^-(t)=
\begin{cases}
O(e^{-\frac{n-2}{2}t})\qquad\text{for $n>2$} \\
O(t)\qquad\text{for $n=2$}
\end{cases}
$$
as $t\to-\infty$. As a consequence, writing
$$\phi(y)=p(t)(\alpha ^+w^+(t)+\alpha^-w^-(t))$$
and using that $\phi$ is bounded, we have $\alpha^-=0$, which shows that the space $K(\Sigma,O(m)\times O(n))$ of bounded $O(m)\times O(n)$-invariant Jacobi fields has dimension $1$.\\ 

Since $\zeta(y):=y\cdotp \nu_\Sigma(y)\in K(\Sigma,O(m)\times O(n))\backslash\{0\}$, it generates the space.\\

Due to Theorem \ref{th-dec-w-low-dim}, $\zeta$ is sign-changing $|\zeta(y)|\le c|y|^{-\frac{N-2}{2}}$, the decay rate being sharp. This vanishing property is consistent with the instability of our hypersurfaces.
\end{proof}
\begin{remark}
    Propositions $3$ and $5$ of \cite{M} actually show that, if $\Sigma$ is one of the hypersurfaces constructed in Theorem \ref{th-Sigma-low-dim}, the Jacobi field $\zeta(y):=y\cdotp\nu_\Sigma(y)$ vanishes infinitely many times and the decay rate at infinity given by Theorem \ref{th-Jabobi-Sigma-low-dim} is sharp.
\end{remark}

\end{document}